    \DeclareTextFontCommand{\emph}{\bfseries\em}
      \DeclareSymbolFontAlphabet{\mathbb}{AMSb}
      \DeclareSymbolFontAlphabet{\mathbbl}{bbold}
      \DeclareMathSymbol{\bbepsilon}{\mathord}{bbold}{"0F}
    \definecolor{refkey}{rgb}{128,0,128}
    \definecolor{labelkey}{rgb}{128,0,128}
    \pgfplotsset{compat=1.17}
\crefname{thm}{Theorem}{Theorems}
\crefname{cor}{Corollary}{Corollaries}
\crefname{lem}{Lemma}{Lemmas}
\crefname{prop}{Proposition}{Propositions}
\crefname{dfn}{Definition}{Definitions}
\crefname{eg}{Example}{Examples}
\crefname{claim}{Claim}{Claims}
\crefname{conj}{Conjecture}{Conjectures}
\crefname{conv}{Notation}{Notations}
\crefname{rmk}{Remark}{Remarks}
\crefname{prob}{Problem}{Problems}
\crefname{figure}{Figure}{Figures}
\crefname{table}{Table}{Tables}
\crefname{section}{Section}{Sections}
\crefname{subsection}{Section}{Sections}
\crefname{appendix}{Appendix}{Appendices}
\crefname{lemdef}{Lemma-Definition}{Lemma-Definitions}
\crefname{conv}{Convention}{Conventions}
\crefname{introthm}{Theorem}{Theorems}
\crefname{introcor}{Corollary}{Corollaries}
\crefname{introconj}{Conjecture}{Conjectures}
\newtheorem{thm}{Theorem}
\newtheorem{lem}[thm]{Lemma}
\newtheorem{prop}[thm]{Proposition}
\newtheorem{introthm}{Theorem}
\theoremstyle{definition}
\newtheorem{dfn}[thm]{Definition}
\newtheorem{rmk}[thm]{Remark}
\newtheorem{eg}[thm]{Example}
\numberwithin{figure}{section}
\numberwithin{equation}{section}
\numberwithin{thm}{section}
    \newcommand{\cR}{\mathcal{R}}
    \newcommand{\bN}{\mathbb{N}}
    \newcommand{\bZ}{\mathbb{Z}}
    \newcommand{\bs}{{\boldsymbol{s}}}
    \newcommand{\G}{\mathcal{G}}
    \newcommand{\W}{\mathcal{W}}
    \newcommand{\TL}[2]{{\mathcal{T\!L}}(#1,#2)}
    \newcommand{\Graph}[2]{{\G}(#1;#2)}
    \newcommand{\ID}{\mathbbl{1}}
    \newcommand{\mdeg}[1]{\operatorname{d_{\ast}}\!\left(#1\right)}
    \newcommand{\BNet}[5]{{\left\uparrow\!\begin{smallmatrix}#5& #4\\ #2& #3\end{smallmatrix}\!\right\downarrow}_{#1}}
    \newcommand{\saddle}{\mathbf{S}}
\title{The zero stability for the one-row colored $\mathfrak{sl}_3$-Jones polynomial}
\author[W.~Yuasa]{Wataru Yuasa}
\date{}
\address{Graduate School of Science\\ Division of Mathematics and Mathematical Sciences\\ Kyoto University\\ Kitashirakawa Oiwake-cho, Sakyo-ku, Kyoto 606-8502, Japan}
\email{yuasa.wataru.6m@kyoto-u.ac.jp}
\urladdr{https://wataruyuasa.github.io/math/}
\subjclass[2020]{57K10, 57K14, 57K16}
\keywords{colored Jones polynomial; tails of knots; $q$-series}
\begin{document}

\begin{abstract}
    The stability of coefficients of colored ($\mathfrak{sl}_2$-) Jones polynomials $\{J_{K,n}^{\mathfrak{sl}_2}(q)\}_n$ was discovered by Dasbach and Lin. This stability is now called the zero-stability of $J_{K,n}^{\mathfrak{sl}_2}(q)$. Armond showed zero stability for a $B$-adequate link by using the linear skein theory based on the Kauffman bracket. In this paper, we prove the zero stability of one-row colored $\mathfrak{sl}_{3}$-Jones polynomials $\{J_{K,n}^{\mathfrak{sl}_3}(q)\}_n$ for $B$-adequate links $L$ with anti-parallel twist regions by using the linear skein theory based on Kuperberg's $\mathfrak{sl}_3$-webs. It implies the existence of many $q$-series obtained from a quantum invariant associated with $\mathfrak{sl}_3$.
\end{abstract}
\maketitle
\tableofcontents



\tikzset{->-/.style={decoration={
  markings,
  mark=at position #1 with {\arrow[thick, black]{>}}},postaction={decorate}}}
\tikzset{-<-/.style={decoration={
  markings,
  mark=at position #1 with {\arrow[thick, black]{<}}},postaction={decorate}}}
\tikzset{-|-/.style={decoration={
  markings,
  mark=at position #1 with {\arrow[black, semithick]{|}}},postaction={decorate}}}
\tikzset{
    overarc/.style={
      white, double=black, double distance=0.4pt, line width=2.4pt
    }
}

\tikzset{
    triple/.style args={[#1] in [#2] in [#3]}{
        #1,preaction={preaction={draw,#3},draw,#2}
    }
}


\section{Introduction}\label{sect:intro}
the colored $\mathfrak{g}$-Jones polynomial of a knot $K$ is a quantum invariant obtained from an irreducible representation of a simple Lie algebra $\mathfrak{g}$.
In this paper, we will discuss some stability of coefficients of the one-row colored $\mathfrak{sl}_3$-Jones polynomials $\{J_{K,n}^{\mathfrak{sl}_3}(q)\in\bZ[q^{\pm\frac{1}{2}}]\mid n\in\bN\}$ which is a quantum invariant of $K$ associated with irreducible representations of $\mathfrak{sl}_3$ corresponding to the one-row Yang diagram $(n)$.
This kind of stability for the colored ($\mathfrak{sl}_2$-) Jones polynomials was discovered by Dasbach and Lin~\cite{DasbachLin06, DasbachLin07}.
They showed that some leading coefficients, concerning the degree of $q$, of $\{J_{{K},{n}}^{\mathfrak{sl}_2}(q)\}_{n}$ are independent of the colorings $n$ (where $n+1$ is the dimension of an irreducible representation) if a knot $K$ is alternating.
They also conjectured that the first $n$ coefficients of $J_{{K},{N}}^{\mathfrak{sl}_2}(q)$ are constant for all $N$ greater than $n$ if $K$ is alternating.
Armond~\cite{Armond13} proved this conjecture for a larger class of links called adequate links, which contain alternating links.
Independently, Garoufalidis and L\^{e}~\cite{GaroufalidisLe15} proved more general stability, called \emph{$k$-stability}, for alternating links where $k$ is a non-negative integer.
in the sense of Garoufalidis and L\^e, the stability proved by \cite{Armond13} corresponds to the zero stability. 
The $k$-stability also ensures the existence of the $q$-series called the $k$-limit, which is closely related to quantum modular forms. The $0$-limit is also known as the \emph{tail} of $K$.
\begin{dfn}
    Let $\hat{J}_{{K},{n}}^{\mathfrak{sl}_2}(q)\coloneqq \pm q^{d}J_{{K},{n}}^{\mathfrak{sl}_2}(q)=a_0+\sum_{i=1}^{\infty}a_iq^i$ be a normalization of the colored Jones polynomial $J_{{K},{n}}^{\mathfrak{sl}_2}(q)$ of a knot $K$ where the sign is determined so that $a_0$ becomes positive.
    The \emph{tail} of $K$ is a $q$-series $\Phi_{K}(q)\in\mathbb{Z}[[q]]$ satisfying
    \[
        \Phi_{K}(q)-\hat{J}_{{K},{n}}^{\mathfrak{sl}_2}(q) \in q^{n+1}\mathbb{Z}[[q]],
    \]
    for any positive integer $n$.
\end{dfn}
Note that the integrality theorem for the colored Jones polynomial proved in \cite{Le00} claims that the coefficients of $\hat{J}_{{K},{n}}(q)$ become integral, therefore its tail $\Phi_{K}(q)$ belongs to $\mathbb{Z}[[q]]$.
Armond and Dasbach~\cite{ArmondDasbach17} showed that the tail of an adequate knot is determined by its reduced $B$-graph.\footnote{They consider $A$-graphs. However, this corresponds to $B$-graphs in our convention. That is, our $q$ is $q^{-1}$ in \cite{ArmondDasbach17}.} 
A similar result was also obtained by Garoufalidis--Norin~\cite{GaroufalidisNorinVuong16}.
They stated that the first three coefficients of $\Phi_{K}(q)$ of an alternating link $K$ are described in terms of its reduced Tait graph.
From these results, we can see that the tail is not useful in distinguishing links. 
However, tails of knots and links give us interesting $q$-series related to quantum modular forms.
For example, Garoufalidis--L\^e~\cite{GaroufalidisLe15} showed that tails of alternating links are described as a generalization of Nahm sums.
In particular, the tail of $(2,m)$-torus link is the (false) theta series.
In \cite{ArmondDasbach11A, Hajij16, Yuasa18}, the Andrews-Gordon type identity for the (false) theta series was derived from two explicit formulas for the tail of a $(2,m)$-torus link. 
Explicit formulas for tails of other knots and links have been studied in \cite{GaroufalidisLe15, ElhamdadiHajij17, KeilthyOsburn16, BeirneOsburn17}, and for quantum spin networks in~\cite{Hajij16}.

Our goal is to develop a study of the stability and tails for $J_{K,n}^{\mathfrak{sl}_2}(q)$ to quantum invariants $J_{K,\lambda}^{\mathfrak{g}}(q)$ associated with a higher rank simple Lie algebra $\mathfrak{g}$.
Many problems arise when we consider higher rank cases.
For example, we have to choose a sequence of irreducible representations to consider the stability because the colored $\mathfrak{g}$-Jones polynomial of a knot is parametrized by dominant weights.
Moreover, the explicit computation of the colored $\mathfrak{g}$-Jones polynomials of a given knot is much more difficult than in the $\mathfrak{sl}_2$ case.

The aim of this paper is to show zero stability of the one-row colored $\mathfrak{sl}_3$-Jones polynomial $\{J^{\mathfrak{sl}_3}_{{K},{n}}(q)\}_n$ of a $B$-adequate link $K$ with anti-parallel twist regions.
The one-row coloring $n$ for $K$ means that all components of $K$ are colored by the irreducible representation of the highest weight $n\varpi_1$ (or we write it as $(n,0)$) where $\{\varpi_i\}_{i=1,2}$ correspond to the fundamental weights of $\mathfrak{sl}_3$.
There are some studies on the explicit computation of the colored $\mathfrak{sl}_3$ Jones polynomial, for the trefoil knot in \cite{Lawrence03}, for the $(2,2m+1)$- and $(4,5)$-torus knots with general coloring in \cite{GaroufalidisMortonVuong13, GaroufalidisVuong17}, and for $2$-bridge links with one-row coloring in \cite{Yuasa17}, for pretzel links with one-row coloring in \cite{Kawasoe22}.
These explicit formulae give tails of the colored $\mathfrak{sl}_{3}$-Jones polynomial of some links in \cite{GaroufalidisVuong17, Yuasa18, Yuasa20A}.
For the $\lambda$-colored $\mathfrak{g}$-Jones polynomial of the $(a,b)$-torus knot when $\mathfrak{g}$ is a simple Lie algebra of rank $2$, Garoufalidis and Vuong~\cite{GaroufalidisVuong17} proved the $k$-stability for any $k$.
They used the formula of Rosso and Jones in \cite{RossoJones93} to prove it.
In this paper, we prove the zero stability of the one-row colored $\mathfrak{sl}_3$-Jones polynomial for any anti-parallel $B$-adequate link using the linear skein theory for $\mathfrak{sl}_3$ developed by Kuperberg~\cite{Kuperberg96}.
Our proof is inspired by the work of Armond~\cite{Armond13} using the Kauffman bracket.

\begin{introthm}[Zero stability for the one-row colored $\mathfrak{sl}_{3}$-Jones polynomial, see \cref{thm:anti-thm}]
    For any anti-parallel $B$-adequate link $L$, there exists $\Phi^{\mathfrak{sl}_3}_{L}(q)$ in $\mathbb{Z}[[q]]$ such that
    \[
        \Phi^{\mathfrak{sl}_3}_{L}(q)-\hat{J}^{\mathfrak{sl}_3}_{{L},{n}}(q) \in q^{n+1}\mathbb{Z}[[q]].
    \]
    An anti-parallel $B$-adequate link is an oriented link whose representative is a $B$-adequate link diagram with only anti-parallel twist regions; see details in \cref{dfn:B-adequate}.
\end{introthm}

This result and proof are an extension of the work on the zero stability of colored 
$\mathfrak{sl}_2$-Jones polynomials in \cite{Armond13} to $\mathfrak{sl}_3$.
We will discuss the zero stability for general $B$-adequate link in the forthcoming paper.

This paper is organized as follows. 
In \cref{sect:pre}, we introduce the $\mathfrak{sl}_3$ version of the linear skein theory and review properties of $\mathfrak{sl}_3$-webs and $\mathfrak{sl}_3$-clasps.
In \cref{sec:degree}, we discuss the lower bound of the minimum degree of a clasped $\mathfrak{sl}_3$-web.
In \cref{sect:stability}, we prove the zero stability of the one-row colored $\mathfrak{sl}_{3}$-Jones polynomials by calculating clasped $\mathfrak{sl}_3$-webs.
In \cref{Appendix}, we prove some new formulas for the clasped $\mathfrak{sl}_3$-web used in this paper.

\paragraph*{\textbf{Acknowledgment}}
The author gratefully thanks the referees for their helpful comments and suggestions that improve the readability of our proofs.
This work was supported by Grant-in-Aid for Early-Career Scientists Grant Number JP19K14528, JP23K12972.

\section{$\mathfrak{sl}_3$-webs and $\mathfrak{sl}_3$-clasps}\label{sect:pre}
We mainly treat a space of $\mathfrak{sl}_3$-webs which is a linear combination of oriented planar trivalent graphs with coefficients in $\cR=\mathbb{Z}((q^{\frac{1}{6}}))$. 
Let us introduce some useful symbols for elements in $\cR$. We set
\begin{itemize}
	\item a \emph{quantum integer} by $\left[n\right]\coloneqq \frac{q^{\frac{n}{2}}-q^{-\frac{n}{2}}}{q^{\frac{1}{2}}-q^{-\frac{1}{2}}}$ for any non-negative integer $n\in\mathbb{Z}_{\geq 0}$, and
	\item a \emph{quantum binomial coefficient} by ${n \brack k}\coloneqq \frac{\left[n\right]!}{\left[k\right]!\left[n-k\right]!}$ for $0\leq k\leq n$, and ${n \brack k}=0$ for $k>n$ where $[n]!\coloneqq [n][n-1]\cdots[1]$.
\end{itemize}
Let us define $\mathfrak{sl}_3$-web spaces based on \cite{Kuperberg96}. 
We consider a surface $\Sigma$ equipped with signed marked points $(P,\bs)$ where $P\subset\partial \Sigma$ is a finite set and $\bs\colon P\to\{{+},{-}\}$ a map. 

A \emph{tangled trivalent graph} on $\Sigma$ is an immersion of a directed graph $G$ into $\Sigma$ satisfying:
\begin{enumerate}
	\item the valency of a vertex of $G$ is $1$ or $3$,
	\item all crossing points are transversal double points of two edges with under/over-passing information,
	\item the set of univalent vertices of $G$ coincides with $P$,
	\item a neighborhood of a vertex in $\Sigma$ is one of the followings;
	  \begin{itemize}
		  \item[] 
			{\ \tikz[baseline=-.6ex, scale=.1]{
			\draw[dashed, fill=white] (0,0) circle [radius=7];
			\draw[-<-=.5] (0:0) -- (90:7); 
			\draw[-<-=.5] (0:0) -- (210:7); 
			\draw[-<-=.5] (0:0) -- (-30:7);
			}\ }
			,
			{\ \tikz[baseline=-.6ex, scale=.1]{
			\draw[dashed, fill=white] (0,0) circle [radius=7];
			\draw[->-=.5] (0:0) -- (90:7); 
			\draw[->-=.5] (0:0) -- (210:7); 
			\draw[->-=.5] (0:0) -- (-30:7);
			}\ }
			,
			{\ \tikz[baseline=-.6ex, scale=.1]{
			\draw[dashed] (0,0) circle [radius=7];
			\draw[->-=.5] (0,0)--(7,0);
			\node at (0,0) [left]{${-}$};
			\draw[thick] (0,7) -- (0,-7);
			\draw[fill=cyan] (0,0) circle [radius=.5];
			}\ }
			,
			{\ 
			\tikz[baseline=-.6ex, scale=.1]{
			\draw[dashed] (0,0) circle [radius=7];
			\draw[-<-=.5] (0,0)--(7,0);
			\node at (0,0) [left]{${+}$};
			\draw[thick] (0,7) -- (0,-7);
			\draw[fill=cyan] (0,0) circle [radius=.5];
			}\ }.
	  \end{itemize}
\end{enumerate}
A tangled trivalent graph is \emph{flat} if it has no crossings.
An \emph{elliptic face} of a flat trivalent graph $G$ is a $0$-gon ({\em i.e.}, a disk), $2$- or $4$-gon in the set of connected components of $\Sigma\setminus G$ which does not touch the boundary of $\Sigma$.

\begin{dfn}[$\mathfrak{sl}_3$-web spaces~\cite{Kuperberg96}]\label{A2skein}
Let $\G(\bs;\Sigma)$ be the set of the boundary fixing isotopy classes of tangled trivalent graphs on $\Sigma$.
The \emph{$\mathfrak{sl}_3$-web space $\W(\bs;\Sigma)$} is a quotient of the $\mathcal{R}$-module spanned by $\G(\bs;\Sigma)$ modulo the following \emph{$\mathfrak{sl}_3$-skein relations}:
\begin{itemize}
\item
$
\mathord{\ \tikz[baseline=-.6ex, scale=.1]{
\draw[thin, dashed, fill=white] (0,0) circle [radius=7];
\draw[->-=.8] (-45:7) -- (135:7);
\draw[->-=.8, overarc] (-135:7) -- (45:7);
}\ }
=q^{\frac{1}{3}}
\mathord{\ \tikz[baseline=-.6ex, scale=.1]{
\draw[thin, dashed, fill=white] (0,0) circle [radius=7];
\draw[->-=.5] (-45:7) to[out=north west, in=south] (3,0) to[out=north, in=south west] (45:7);
\draw[->-=.5] (-135:7) to[out=north east, in=south] (-3,0) to[out=north, in=south east] (135:7);
}\ }
-q^{-\frac{1}{6}}
\mathord{\ \tikz[baseline=-.6ex, scale=.1]{
\draw[thin, dashed, fill=white] (0,0) circle [radius=7];
\draw[->-=.5] (-45:7) -- (0,-3);
\draw[->-=.5] (-135:7) -- (0,-3);
\draw[-<-=.5] (45:7) -- (0,3);
\draw[-<-=.5] (135:7) -- (0,3);
\draw[-<-=.5] (0,-3) -- (0,3);
}\ }$,\vspace{.5em}
\item
$\mathord{\ \tikz[baseline=-.6ex, scale=.1]{
\draw[thin, dashed, fill=white] (0,0) circle [radius=7];
\draw[->-=.8] (-135:7) -- (45:7);
\draw[->-=.8, overarc] (-45:7) -- (135:7);
}\ }
=q^{-\frac{1}{3}}
\mathord{\ \tikz[baseline=-.6ex, scale=.1]{
\draw[thin, dashed, fill=white] (0,0) circle [radius=7];
\draw[->-=.5] (-45:7) to[out=north west, in=south] (3,0) to[out=north, in=south west] (45:7);
\draw[->-=.5] (-135:7) to[out=north east, in=south] (-3,0) to[out=north, in=south east] (135:7);
}\ }
-q^{\frac{1}{6}}
\mathord{\ \tikz[baseline=-.6ex, scale=.1]{
\draw[thin, dashed, fill=white] (0,0) circle [radius=7];
\draw[->-=.5] (-45:7) -- (0,-3);
\draw[->-=.5] (-135:7) -- (0,-3);
\draw[-<-=.5] (45:7) -- (0,3);
\draw[-<-=.5] (135:7) -- (0,3);
\draw[-<-=.5] (0,-3) -- (0,3);
}\ }$,\vspace{.5em}
\item 
$\mathord{\ \tikz[baseline=-.6ex, scale=.1]{
\draw[thin, dashed, fill=white] (0,0) circle [radius=7];
\draw[-<-=.6] (-45:7) -- (-45:3);
\draw[->-=.6] (-135:7) -- (-135:3);
\draw[->-=.6] (45:7) -- (45:3);
\draw[-<-=.6] (135:7) -- (135:3);
\draw[-<-=.5] (45:3) -- (135:3);
\draw[->-=.5] (-45:3) -- (-135:3);
\draw[-<-=.5] (45:3) -- (-45:3);
\draw[->-=.5] (135:3) -- (-135:3);
}\ }
=
\mathord{\ \tikz[baseline=-.6ex, scale=.1]{
\draw[thin, dashed, fill=white] (0,0) circle [radius=7];
\draw[-<-=.5] (-45:7) to[out=north west, in=south] (3,0) to[out=north, in=south west] (45:7);
\draw[->-=.5] (-135:7) to[out=north east, in=south] (-3,0) to[out=north, in=south east] (135:7);
}\ }
+
\mathord{\ \tikz[baseline=-.6ex, rotate=90, scale=.1]{
\draw[thin, dashed, fill=white] (0,0) circle [radius=7];
\draw[-<-=.5] (-45:7) to[out=north west, in=south] (3,0) to[out=north, in=south west] (45:7);
\draw[->-=.5] (-135:7) to[out=north east, in=south] (-3,0) to[out=north, in=south east] (135:7);
}\ }
$ (the $4$-gon relation), \vspace{.5em}
\item 
$\mathord{\ \tikz[baseline=-.6ex, scale=.1]{
\draw[thin, dashed, fill=white] (0,0) circle [radius=7];
\draw[->-=.5] (0,-7) -- (0,-3);
\draw[->-=.5] (0,3) -- (0,7);
\draw[-<-=.5] (0,-3) to[out=east, in=south] (3,0) to[out=north, in=east] (0,3);
\draw[-<-=.5] (0,-3) to[out=west, in=south] (-3,0) to[out=north, in=west] (0,3);
}\ }
=
\left[2\right]
\mathord{\ \tikz[baseline=-.6ex, scale=.1]{
\draw[thin, dashed, fill=white] (0,0) circle [radius=7];
\draw[->-=.5] (0,-7) -- (0,7);
}\ }
$ (the $2$-gon relation), \vspace{.5em}
\item 
$
\mathord{\ \tikz[baseline=-.6ex, scale=.1]{
\draw[thin, dashed, fill=white] (0,0) circle [radius=7];
\draw[->-=.5] (0,0) circle [radius=3];
}\ }
=
\left[3\right]
\mathord{\ \tikz[baseline=-.6ex, scale=.1]{
\draw[thin, dashed, fill=white] (0,0) circle [radius=7];
}\ }
=
\mathord{\ \tikz[baseline=-.6ex, scale=.1]{
\draw[thin, dashed, fill=white] (0,0) circle [radius=7];
\draw[-<-=.5] (0,0) circle [radius=3];
}\ }
.
$ (the trivial loop relation)\vspace{.5em}
\end{itemize}
An \emph{$\mathfrak{sl}_3$-web} is an element in $\W(\bs;\Sigma)$ and a \emph{basis web} is an $\mathfrak{sl}_3$-web represented by a graph in $\Graph{\bs}{\Sigma}$ with no elliptic faces.
\end{dfn}

The $\mathfrak{sl}_3$-skein relation realizes the \emph{Reidemeister moves} (R1'), and (R2) -- (R4):
\begin{align*}
&\text{(R1')}~
\mathord{\ \tikz[baseline=-.6ex, scale=.1]{
\draw [thin, dashed] (0,0) circle [radius=5];
\draw (3,-2)
to[out=south, in=east] (2,-3)
to[out=west, in=south] (0,0)
to[out=north, in=west] (2,3)
to[out=east, in=north] (3,2);
\draw[overarc] (0,-5) 
to[out=north, in=west] (2,-1)
to[out=east, in=north] (3,-2);
\draw[overarc] (3,2)
to[out=south, in=east] (2,1)
to[out=west, in=south] (0,5);
}\ }
\tikz[baseline=-.6ex, scale=.5]{
\draw [<->, xshift=1.5cm] (1,0)--(2,0);
} 
\mathord{\ \tikz[baseline=-.6ex, scale=.1, xshift=3cm]{
\draw[thin, dashed] (0,0) circle [radius=5];
\draw (90:5) to (-90:5);
}\ }
&&\text{(R2)}~
\mathord{\ \tikz[baseline=-.6ex, scale=.1]{
\draw[thin, dashed] (0,0) circle [radius=5];
\draw (135:5) to[out=south east, in=west] (0,-2) to[out=east, in=south west](45:5);
\draw[overarc]
(-135:5) to[out=north east, in=west] (0,2) to[out=east, in=north west] (-45:5);
}\ }
\tikz[baseline=-.6ex, scale=.5]{
\draw [<->, xshift=1.5cm] (1,0)--(2,0);
}
\mathord{\ \tikz[baseline=-.6ex, scale=.1, xshift=3cm]{
\draw[thin, dashed] (0,0) circle [radius=5];
\draw (135:5) to[out=south east, in=west](0,2) to[out=east, in=south west](45:5);
\draw (-135:5) to[out=north east, in=west](0,-2) to[out=east, in=north west] (-45:5);
}\ },\\
&\text{(R3)}~
\mathord{\ \tikz[baseline=-.6ex, scale=.1]{
\draw [thin, dashed] (0,0) circle [radius=5];
\draw (-135:5) -- (45:5);
\draw[overarc] (135:5) -- (-45:5);
\draw[overarc] 
(180:5) to[out=east, in=west](0,3) to[out=east, in=west] (-0:5);
}\ }
\tikz[baseline=-.6ex, scale=.5]{
\draw[<->, xshift=1.5cm] (1,0)--(2,0);
}
\mathord{\ \tikz[baseline=-.6ex, scale=.1, xshift=3cm]{
\draw [thin, dashed] (0,0) circle [radius=5];
\draw (-135:5) -- (45:5);
\draw [overarc] (135:5) -- (-45:5);
\draw[overarc] 
(180:5) to[out=east, in=west] (0,-3) to[out=east, in=west] (-0:5);
}\ },
&&\text{(R4)}~
\mathord{\ \tikz[baseline=-.6ex, scale=.1]{
\draw [thin, dashed] (0,0) circle [radius=5];
\draw (0:0) -- (90:5); 
\draw (0:0) -- (210:5); 
\draw (0:0) -- (-30:5);
\draw[overarc]
(180:5) to[out=east, in=west] (0,3) to[out=east, in=west] (0:5);
}\ }
\tikz[baseline=-.6ex, scale=.5]{
\draw[<->, xshift=1.5cm] (1,0)--(2,0);
}
\mathord{\ \tikz[baseline=-.6ex, scale=.1]{
\draw [thin, dashed] (0,0) circle [radius=5];
\draw (0:0) -- (90:5); 
\draw (0:0) -- (210:5); 
\draw (0:0) -- (-30:5);
\draw[overarc]
(180:5) to[out=east, in=west] (0,-3) to[out=east, in=west](0:5);
}\ }, 
\mathord{\ \tikz[baseline=-.6ex, scale=.1]{
\draw [thin, dashed] (0,0) circle [radius=5];
\draw[overarc]
(180:5) to[out=east, in=west] (0,3) to[out=east, in=west] (0:5);
\draw[overarc] (0:0) -- (90:5); 
\draw (0:0) -- (210:5); 
\draw (0:0) -- (-30:5);
}\ }
\tikz[baseline=-.6ex, scale=.5]{
\draw[<->, xshift=1.5cm] (1,0)--(2,0);
}
\mathord{\ \tikz[baseline=-.6ex, scale=.1]{
\draw [thin, dashed] (0,0) circle [radius=5];
\draw[overarc]
(180:5) to[out=east, in=west] (0,-3) to[out=east, in=west](0:5);
\draw (0:0) -- (90:5); 
\draw[overarc] (0:0) -- (210:5); 
\draw[overarc] (0:0) -- (-30:5);
}\ }.
\end{align*}
The above means that $\mathfrak{sl}_3$-webs on the left and right sides represent the same element in an $\mathfrak{sl}_3$-web space for any choice of the orientation of edges.

It is easy to see that any tangled trivalent graph decomposes into a sum of basis web by using the $\mathfrak{sl}_3$ skein relation.
In fact, the set of basis webs consists of a basis of the $\mathfrak{sl}_3$-web space.
\begin{thm}[\cite{Kuperberg96}, \cite{SikoraWestbury}]
	The set of basis web on a surface $\Sigma$ with a signed marked points $\bs\colon P\to\{{+},{-}\}$ is a basis of $\W(\bs;\Sigma)$ as a $\mathbb{Z}[q^{\pm\frac{1}{6}}]$-module.
\end{thm}

In some cases, one can give the set of basis webs via an argument concerning the Euler characteristic.
\begin{eg}\label{eg:web-space}
	Let $D$ be a disk with a base point $\ast\in\partial D$.
	We identify signed marked points on $\partial D\setminus \{\ast\}$ with a sequence of signs.
	Then, the following isomorphisms hold for any $\epsilon\in\{{+},{-}\}$.
	\begin{enumerate}
		\item $\W(\emptyset;D)$ of a disk $D$ with no marked points is isomorphic to a free $\cR$-module spanned by the empty diagram $\varnothing$.
		\item $\W(\epsilon;D)=0$.
		\item $\W(\epsilon\epsilon;D)=0$.
		\item $\W(\epsilon\bar{\epsilon};D)$ is a free $\cR$-module spanned by an oriented simple arc.
		\item $\W(\epsilon\epsilon\bar{\epsilon};D)=0$.
		\item $\W(\epsilon\epsilon\epsilon;D)$ is a free $\cR$-module spanned by a tripod with a sink or source vertex.
	\end{enumerate}
	In the above, $\bar{\epsilon}$ means the opposite sign of $\epsilon$.
\end{eg}

We review a diagrammatic definition of an $\mathfrak{sl}_3$-clasp introduced in \cite{Kuperberg96, OhtsukiYamada97, Kim07, Yuasa20A} and note some useful properties. 
The $\mathfrak{sl}_3$-clasp plays a similar role to the Jones-Wenzl projector in the Kauffman bracket skein theory.

In what follows, we will mainly treat tangled trivalent graphs or $\mathfrak{sl}_{3}$-webs in a rectangle $D=[0,1]\times[0,1]$. 
We assume that the set of marked points lies in the top edge $I_1=[0,1]\times\{1\}$ and the bottom edge $I_0=[0,1]\times\{0\}$, and a base point $\ast$ at $(0,0)$.
In this situation, the set of marked points is divided into $P^{(0)}$ and $P^{(1)}$ where $P^{(j)}\coloneqq P\cap I_j$ and we denote the assignment of signs by $\bs^{(j)}\colon P^{(j)}\to \{{+},{-}\}$ for $j=0,1$.
One can identify $\bs^{(j)}$ with a sequence of signs on $[0,1]\times\{j\}$ arranged from $0$ to $1$.
We simplify a symbol $\G(\bs^{(0)}\sqcup \bs^{(1)};D)$ and $\W(\bs^{(0)}\sqcup \bs^{(1)};D)$ by $\G(\bs^{(0)},\bar{\bs}^{(1)})$ and $\TL{\bs^{(0)}}{\bar{\bs}^{(1)}}$ respectively where $\bar{\bs}^{(1)}$ is a sequence consists of the oppsite signs of $\bs^{(1)}$.\footnote{We take the opposite sign $\bar{\bs}^{(1)}$ to be consistent with the composition.}
When we describe diagrams representing $\mathfrak{sl}_3$-webs in $D$, we omit to write the signs, the basepoint, and the boundary of $D$.
The composition $\TL{\bs_{1}}{\bs_{2}}\otimes\TL{\bs_{0}}{\bs_{1}}\to\TL{\bs_{0}}{\bs_{2}}$ is defined by gluing the top side of an $\mathfrak{sl}_3$-web in $\TL{\bs_{0}}{\bs_{1}}$ and the bottom side of an $\mathfrak{sl}_3$-web in $\TL{\bs_{1}}{\bs_{2}}$.

We firstly define the $\mathfrak{sl}_3$-clasp in $\TL{{-}^{m}}{{-}^{m}}$.
\begin{dfn}[One-row colored $\mathfrak{sl}_3$-clasps]\label{onerowclasp}
The $\mathfrak{sl}_3$-clasp $\mathrm{JW}_{{-}^m}$ described by a white box with $m\in\mathbb{Z}_{{}>0}$ is defined as follows.
\begin{enumerate}
\item
$\mathrm{JW}_{{-}}
=\mathord{\ \tikz[baseline=-.6ex, scale=.1]{
\draw[->-=.8] (0,-6) -- (0,6);
\draw[fill=white] (-3,-1) rectangle (3,1);
\node at (0,5) [left]{${\scriptstyle 1}$};
}\ }
\coloneqq 
\,\tikz[baseline=-.6ex, scale=.1]{
\draw[->-=.5] (0,-6) -- (0,6); 
}\,$

\item
$\mathrm{JW}_{{-}^{m+1}}
=\hspace{-1em}\mathord{\tikz[baseline=-.6ex, scale=.1]{
\draw[->-=.8] (0,-6) -- (0,6);
\draw[fill=white] (-3,-1) rectangle (3,1);
\node at (0,5) [left]{${\scriptstyle m+1}$};
}\ }
\coloneqq \!
\mathord{\tikz[baseline=-.6ex, scale=.1]{
\draw[->-=.8] (0,-6) -- (0,6);
\draw[->-=.5] (5,-6) -- (5,6);
\draw[fill=white] (-2,-1) rectangle (2,1);
\node at (0,5) [left]{${\scriptstyle m}$};
}\ }
-\frac{\left[m\right]}{\left[m+1\right]}
\mathord{\ \tikz[baseline=-.6ex, scale=.1]{
\draw[->-=.6] (0,4) -- (0,7);
\draw[->-=.6] (0,-7) -- (0,-4);
\draw[->-=.5] (-2,-3) -- (-2,3);
\draw[-<-=.7] (5,-1) -- (5,1);
\draw[-<-=.7] (2,3) to[out=south, in=west] (5,1);
\draw[-<-=.3] (8,7) -- (8,3) to[out=south, in=east] (5,1);
\draw[->-=.7, yscale=-1] (2,3) to[out=south, in=west] (5,1);
\draw[->-=.3, yscale=-1] (8,7) -- (8,3) to[out=south, in=east] (5,1);
\draw[fill=white] (-3,3) rectangle (3,4);
\draw[fill=white] (-3,-3) rectangle (3,-4);
\node at (0,6) [left]{${\scriptstyle m}$};
\node at (0,-6) [left]{${\scriptstyle m}$};
\node at (-2,0) [left]{${\scriptstyle m-1}$};
}\ }$.
\end{enumerate}
In the above, an edge labeled by $m$ represents the $m$ parallelization of the edge. 
$\mathrm{JW}_{{+}^{m}}$ is defined by the same diagram with oppositely directed edges.
\end{dfn}

We next introduce the $\mathfrak{sl}_3$-clasp in $\TL{{-}^{m}{+}^{n}}{{-}^{m}{+}^{n}}$.

\begin{dfn}[Two-row colored $\mathfrak{sl}_3$-clasps]\label{tworowclasp}
\[
\mathrm{JW}_{{-}^{m}{+}^{n}}
=
\mathord{\ \tikz[baseline=-.6ex, scale=.1]{
\draw[->-=.2, ->-=.8] (-2,-5) -- (-2,5);
\draw[-<-=.2, -<-=.8] (2,-5) -- (2,5);
\draw[fill=white] (-4,-1) rectangle (4,1);
\node at (-2,5) [left]{${\scriptstyle m}$};
\node at (-2,-5) [left]{${\scriptstyle m}$};
\node at (2,5) [right]{${\scriptstyle n}$};
\node at (2,-5) [right]{${\scriptstyle n}$};
}\ }
=
\sum_{i=0}^{\min\{m,n\}}
(-1)^i
\frac{{m\brack i}{n\brack i}}{{m+n+1\brack i}}
\mathord{\ \tikz[baseline=-.6ex, scale=.1]{
\draw[->-=.5] (-4,5) -- (-4,8);
\draw[-<-=.5] (4,5) -- (4,8);
\draw[-<-=.5, yscale=-1] (-4,5) -- (-4,8);
\draw[->-=.5, yscale=-1] (4,5) -- (4,8);
\draw[->-=.5] (-5,-4) -- (-5,4);
\draw[-<-=.5] (5,-4) -- (5,4);
\draw[-<-=.5] (-3,4) to[out=south, in=south] (3,4);
\draw[->-=.5, yscale=-1] (-3,4) to[out=south, in=south] (3,4);
\draw[fill=white] (-6,4) rectangle (-2,5);
\draw[fill=white, xscale=-1] (-6,4) rectangle (-2,5);
\draw[fill=white, yscale=-1] (-6,4) rectangle (-2,5);
\draw[fill=white, xscale=-1, yscale=-1] (-6,4) rectangle (-2,5);
\node at (-5,0) [left]{${\scriptstyle m-i}$};
\node at (5,0) [right]{${\scriptstyle n-i}$};
\node at (0,5) {${\scriptstyle i}$};
\node at (0,-5) {${\scriptstyle i}$};
\node at (-4,7) [left]{${\scriptstyle m}$};
\node at (4,7) [right]{${\scriptstyle n}$};
\node at (-4,-7) [left]{${\scriptstyle m}$};
\node at (4,-7) [right]{${\scriptstyle n}$};
}\ }.\]
One can define $\mathrm{JW}_{{+}^{m}{-}^{n}}$ in the same way.
\end{dfn}

For convenience in computation of $\mathfrak{sl}_3$-web, we will introduce ``stair-step'' and ``triangle'' webs in \cref{def:step,def:triangle} that also used in \cite{Kim06, Kim07, Yuasa17, FrohmanSikora22}.
In these definitions, the orientation of edges of $\mathfrak{sl}_3$-webs is not explicitly given because it is uniquely determined according to your choice of the orientation of an edge around the box.
\begin{dfn}\label{def:step}
For positive integers $n$ and $m$, a \emph{stair-step web}
$
\mathord{\ \tikz[baseline=-.6ex, scale=.1]{
\draw (-4,0) -- (-2,0);
\draw (2,0) -- (4,0);
\draw (0,-4) -- (0,-2);
\draw (0,2) -- (0,4);
\draw[fill=white] (-2,-2) rectangle (2,2);
\draw (-2,2) -- (2,-2);
\node at (-4,0) [left]{$\scriptstyle{n}$};
\node at (4,0) [right]{$\scriptstyle{n}$};
\node at (0,4) [above]{$\scriptstyle{m}$};
\node at (0,-4) [below]{$\scriptstyle{m}$};
}\ }
$
is defined as
\begin{align*}
&\mathord{\ \tikz[baseline=-.6ex, scale=.1]{
\draw (-4,0) -- (-2,0);
\draw (2,0) -- (4,0);
\draw (0,-4) -- (0,-2);
\draw (0,2) -- (0,4);
\draw[fill=white] (-2,-2) rectangle (2,2);
\draw (-2,2) -- (2,-2);
\node at (-4,0) [left]{$\scriptstyle{n}$};
\node at (4,0) [right]{$\scriptstyle{n}$};
\node at (0,4) [above]{$\scriptstyle{1}$};
\node at (0,-4) [below]{$\scriptstyle{1}$};
}\ }
=
\mathord{\ \tikz[baseline=-.6ex, scale=.1]{
\draw (-7,-5) -- (7,-5);
\draw (-7,-3) -- (7,-3);
\draw (-7,5) -- (7,5);
\draw (4,5) -- (4,7);
\draw (3,3) -- (3,5);
\draw (-4,-7) -- (-4,-5);
\draw (-3,-5) -- (-3,-3);
\draw (-2,-3) -- (-2,-1);
\node[yscale=2.5] at (-9,0) {${\{}$};
\node at (-9,0) [left]{$\scriptstyle{n}$};
\node[rotate=90] at (-6,1){$\scriptstyle{\cdots}$};
\node[rotate=90] at (6,1){$\scriptstyle{\cdots}$};
\node[rotate=45] at (0,1){$\scriptstyle{\cdots}$};
}\ }\quad\text{and}
\mathord{\ \tikz[baseline=-.6ex, scale=.1]{
\draw (-4,0) -- (-2,0);
\draw (2,0) -- (4,0);
\draw (0,-4) -- (0,-2);
\draw (0,2) -- (0,4);
\draw[fill=white] (-2,-2) rectangle (2,2);
\draw (-2,2) -- (2,-2);
\node at (-4,0) [left]{$\scriptstyle{n}$};
\node at (4,0) [right]{$\scriptstyle{n}$};
\node at (0,4) [above]{$\scriptstyle{m}$};
\node at (0,-4) [below]{$\scriptstyle{m}$};
}\ }
=
\mathord{\ \tikz[baseline=-.6ex, scale=.1]{
\draw (-4,0) -- (-2,0);
\draw (2,0) -- (4,0);
\draw (0,-4) -- (0,-2);
\draw (0,2) -- (0,4);
\draw[fill=white] (-2,-2) rectangle (2,2);
\draw (-2,2) -- (2,-2);
\node at (-4,0) [left]{$\scriptstyle{n}$};
\node at (0,4) [above]{$\scriptstyle{m-1}$};
\node at (0,-4) [below]{$\scriptstyle{m-1}$};
\begin{scope}[xshift=8cm]
\draw (-4,0) -- (-2,0);
\draw (2,0) -- (4,0);
\draw (0,-4) -- (0,-2);
\draw (0,2) -- (0,4);
\draw[fill=white] (-2,-2) rectangle (2,2);
\draw (-2,2) -- (2,-2);
\node at (-4,0) [above]{$\scriptstyle{n}$};
\node at (4,0) [right]{$\scriptstyle{n}$};
\node at (0,4) [above]{$\scriptstyle{1}$};
\node at (0,-4) [below]{$\scriptstyle{1}$};
\end{scope}
}\ }
\quad\text{for}\quad m>1.
\end{align*}
\end{dfn}

\begin{dfn}\label{def:triangle}
For a positive integer $n$,
$
\mathord{\ \tikz[baseline=-.6ex, scale=.1]{
\draw (30:5) -- (0,0);
\draw (150:5) -- (0,0);
\draw (270:5) -- (0,0);
\draw[fill=white] (-30:4) -- (90:4) -- (210:4) -- cycle;
\node at (30:5) [below]{$\scriptstyle{n}$};
\node at (150:5) [below]{$\scriptstyle{n}$};
\node at (270:5) [left]{$\scriptstyle{n}$};
}\ }
$
is defined by
$\mathord{\ \tikz[baseline=-.6ex, scale=.1]{
\draw (30:5) -- (0,0);
\draw (150:5) -- (0,0);
\draw (270:5) -- (0,0);
\draw[fill=white] (-30:4) -- (90:4) -- (210:4) -- cycle;
\node at (30:5) [below]{$\scriptstyle{1}$};
\node at (150:5) [below]{$\scriptstyle{1}$};
\node at (270:5) [left]{$\scriptstyle{1}$};
}\ }
=\mathord{\ \tikz[baseline=-.6ex, scale=.1]{
\draw (30:5) -- (0,0);
\draw (150:5) -- (0,0);
\draw (270:5) -- (0,0);
}\ }$ and 
$\mathord{\ \tikz[baseline=-.6ex, scale=.1]{
\draw (30:5) -- (0,0);
\draw (150:5) -- (0,0);
\draw (270:5) -- (0,0);
\draw[fill=white] (-30:4) -- (90:4) -- (210:4) -- cycle;
\node at (30:5) [below]{$\scriptstyle{n}$};
\node at (150:5) [below]{$\scriptstyle{n}$};
\node at (270:5) [left]{$\scriptstyle{n}$};
}\ }
=\mathord{\ \tikz[baseline=-.6ex, scale=.1]{
\draw (-10,5) -- (5,5);
\draw (-5,-4) -- (-5,5);
\draw (-10,1) -- (5,1);
\draw (0,-4) -- (0,1);
\draw[fill=white] (-30:3) -- (90:3) -- (210:3) -- cycle;
\draw[fill=white] (-6,-2) rectangle (-4,3);
\draw (-6,3) -- (-4,-2);
\node at (-10,5) [left]{$\scriptstyle{1}$};
\node at (5,5) [right]{$\scriptstyle{1}$};
\node at (-5,-4) [below]{$\scriptstyle{1}$};
\node at (-10,0) [below]{$\scriptstyle{n-1}$};
\node at (4,0) [above]{$\scriptstyle{n-1}$};
\node at (0,-4) [below]{$\scriptstyle{n-1}$};
}\ }
$
for $n>1$.
\end{dfn}

\begin{dfn}[General type of the $\mathfrak{sl}_3$-clasps]
Let $\bs_{1}$ and $\bs_{2}$ be two sequences of signs which consist of $m$ pluses and $n$ minuses.
We define an $\mathfrak{sl}_3$-clasp in $\TL{\bs_{1}}{\bar{\bs}_{2}}$ by gluing stair-step webs to the top and the bottom of $\mathrm{JW}_{{-}^{m}{+}^{n}}$ as follows,
\[
\mathord{\ \tikz[baseline=-.6ex, scale=.1]{
\draw[->-=.5] (-3,1) to[out=north, in=south west] (0,4);
\draw[-<-=.5] (3,1) to[out=north, in=south east] (0,4);
\draw[->-=.8] (0,4) to[out=north east, in=south] (3,7);
\draw[-<-=.8] (0,4) to[out=north west, in=south] (-3,7);
\draw (0,-5) -- (0,0);
\draw[fill=white] (0,2) -- (-2,4) -- (0,6) -- (2,4) -- cycle;
\draw (0,2) -- (0,6);
\draw[fill=white] (-6,-1) rectangle (6,1);
\node at (5,4) {${\scriptstyle {\dots}}$};
\node at (-5,4) {${\scriptstyle {\dots}}$};
\node at (-3,7) [above]{${\scriptstyle a}$};
\node at (3,7) [above]{${\scriptstyle b}$};
}\ },
\quad\mathord{\ \tikz[baseline=-.6ex, scale=.1, yshift=2cm]{
\draw[-<-=.5] (-3,-1) to[out=south, in=north west] (0,-4);
\draw[->-=.5] (3,-1) to[out=south, in=north east] (0,-4);
\draw[-<-=.8] (0,-4) to[out=south east, in=north] (3,-7);
\draw[->-=.8] (0,-4) to[out=south west, in=north] (-3,-7);
\draw (0,5) -- (0,0);
\draw[fill=white] (0,-2) -- (-2,-4) -- (0,-6) -- (2,-4) -- cycle;
\draw (0,-2) -- (0,-6);
\draw[fill=white] (-6,1) rectangle (6,-1);
\node at (5,-4) {${\scriptstyle {\dots}}$};
\node at (-5,-4) {${\scriptstyle {\dots}}$};
\node at (-3,-7) [below]{${\scriptstyle a}$};
\node at (3,-7) [below]{${\scriptstyle b}$};
}\ }.
\]
Repeating these operations, one can arbitrarily exchange signs in sequences on the top and bottom of the disk respectively, and we obtain an $\mathfrak{sl}_3$-web in $\TL{\bs_{1}}{\bar{\bs}_{2}}$.
We denote it by $\mathrm{JW}_{\bs_1}^{\bar{\bs}_{2}}$ and depict it as
\[
	\mathord{\ 
		\tikz[baseline=-.6ex, scale=.1]{
		\draw (0,-6) -- (0,6);
		\draw[fill=white] (-4,-1) rectangle (4,1);
		\node at (0,4) [left]{${\scriptstyle \bs_{2}}$};
		\node at (0,-4) [left]{${\scriptstyle \bs_{1}}$};
		}
	\ }.
\]
\end{dfn}
The resulting $\mathfrak{sl}_3$-clasp is independent of a choice of sequences of stair-step webs, see \cite{Yuasa20A}.
Thus, $\mathrm{JW}_{\bs_1}^{\bar{\bs}_{2}}$ is uniquely determined by $\bs_1$ and $\bs_2$.

One can prove the following useful formulas for $\mathfrak{sl}_3$-clasps by straightforward computation.
\begin{lem}\label{claspformula}
	Let $\bs_1, \bs_2$, and $\bs_3$ are sequences of sings. An arc labeled by a positive integer $m$ (resp.~$n$) denotes $m$ (resp.~$n$) parallelization of the arc.
\begin{align*}
(1)&\mathord{\ \tikz[baseline=-.6ex, scale=.1]{
\draw (0,-6) -- (0,6);
\draw[fill=white] (-4,-4) rectangle (4,-2);
\draw[fill=white] (-4,4) rectangle (4,2);
\node at (0,5) [left]{${\scriptstyle \bs_3}$};
\node at (0,0) [left]{${\scriptstyle \bs_2}$};
\node at (0,-5) [left]{${\scriptstyle \bs_1}$};
}\ }
=
\mathord{\ \tikz[baseline=-.6ex, scale=.1]{
\draw (0,-6) -- (0,6);
\draw[fill=white] (-4,-1) rectangle (4,1);
\node at (0,4) [left]{${\scriptstyle \bs_3}$};
\node at (0,-4) [left]{${\scriptstyle \bs_1}$};
}\ }
,
\quad\mathord{\ \tikz[baseline=-.6ex, scale=.1]{
\draw (-2,1) -- (0,3);
\draw (2,1) -- (0,3);
\draw (0,6) -- (0,3);
\draw (0,-6) -- (0,0);
\draw[fill=white] (-6,-1) rectangle (6,1);
\node at (4,3) {${\scriptstyle {\dots}}$};
\node at (-4,3) {${\scriptstyle {\dots}}$};
}\ }=0
,
\quad\mathord{\ \tikz[baseline=-.6ex, scale=.1]{
\draw (-2,1) to[out=north, in=west] (0,3) to[out=east, in=north] (2,1);
\draw (0,-6) -- (0,0);
\draw[fill=white] (-6,-1) rectangle (6,1);
\node at (4,3) {${\scriptstyle {\dots}}$};
\node at (-4,3) {${\scriptstyle {\dots}}$};
}\ }=0
,\\
(2)&\mathord{\ \tikz[baseline=-.6ex, scale=.1]{
\draw[-<-=.5] (-3,1) to[out=north, in=south west] (0,4);
\draw[->-=.5] (3,1) to[out=north, in=south east] (0,4);
\draw[-<-=.8] (0,4) to[out=north east, in=south] (3,7);
\draw[->-=.8] (0,4) to[out=north west, in=south] (-3,7);
\draw (0,-5) -- (0,0);
\draw[fill=white] (0,2) -- (-2,4) -- (0,6) -- (2,4) -- cycle;
\draw (0,2) -- (0,6);
\draw[fill=white] (-6,-1) rectangle (6,1);
\node at (5,4) {${\scriptstyle {\dots}}$};
\node at (-5,4) {${\scriptstyle {\dots}}$};
\node at (-3,7) [above]{${\scriptstyle m}$};
\node at (3,7) [above]{${\scriptstyle n}$};
}\ }
=
\mathord{\ \tikz[baseline=-.6ex, scale=.1]{
\draw[->-=.5] (-2,1) -- (-2,7);
\draw[-<-=.5] (2,1) -- (2,7);
\draw (0,-5) -- (0,0);
\draw[fill=white] (-6,-1) rectangle (6,1);
\node at (5,4) {${\scriptstyle {\dots}}$};
\node at (-5,4) {${\scriptstyle {\dots}}$};
\node at (-2,7) [above]{${\scriptstyle m}$};
\node at (2,7) [above]{${\scriptstyle n}$};
}\ }
,
\quad\mathord{\ \tikz[baseline=-.6ex, scale=.1]{
\draw[->-=.5] (-3,1) to[out=north, in=south west] (0,4);
\draw[-<-=.5] (3,1) to[out=north, in=south east] (0,4);
\draw[->-=.8] (0,4) to[out=north east, in=south] (3,7);
\draw[-<-=.8] (0,4) to[out=north west, in=south] (-3,7);
\draw (0,-5) -- (0,0);
\draw[fill=white] (0,2) -- (-2,4) -- (0,6) -- (2,4) -- cycle;
\draw (0,2) -- (0,6);
\draw[fill=white] (-6,-1) rectangle (6,1);
\node at (5,4) {${\scriptstyle {\dots}}$};
\node at (-5,4) {${\scriptstyle {\dots}}$};
\node at (-3,7) [above]{${\scriptstyle m}$};
\node at (3,7) [above]{${\scriptstyle n}$};
}\ }
=
\mathord{\ \tikz[baseline=-.6ex, scale=.1]{
\draw[-<-=.5] (-2,1) -- (-2,7);
\draw[->-=.5] (2,1) -- (2,7);
\draw (0,-5) -- (0,0);
\draw[fill=white] (-6,-1) rectangle (6,1);
\node at (5,4) {${\scriptstyle {\dots}}$};
\node at (-5,4) {${\scriptstyle {\dots}}$};
\node at (-2,7) [above]{${\scriptstyle m}$};
\node at (2,7) [above]{${\scriptstyle n}$};
}\ }
,\\
(3)&
\mathord{\ \tikz[baseline=-.6ex, scale=.1]{
\draw[->-=.8, overarc] (3,1) to[out=north, in=south] (-3,7);
\draw[-<-=.8, overarc] (-3,1) to[out=north, in=south] (3,7);
\draw (0,-5) -- (0,0);
\draw[fill=white] (-6,-1) rectangle (6,1);
\node at (5,4) {${\scriptstyle {\dots}}$};
\node at (-5,4) {${\scriptstyle {\dots}}$};
\node at (-3,7) [above]{${\scriptstyle m}$};
\node at (3,7) [above]{${\scriptstyle n}$};
}\ }
=(-1)^{mn}q^{\frac{mn}{6}}
\mathord{\ \tikz[baseline=-.6ex, scale=.1]{
\draw[->-=.5] (-2,1) -- (-2,7);
\draw[-<-=.5] (2,1) -- (2,7);
\draw (0,-5) -- (0,0);
\draw[fill=white] (-6,-1) rectangle (6,1);
\node at (5,4) {${\scriptstyle {\dots}}$};
\node at (-5,4) {${\scriptstyle {\dots}}$};
\node at (-2,7) [above]{${\scriptstyle m}$};
\node at (2,7) [above]{${\scriptstyle n}$};
}\ }
,\quad
\mathord{\ \tikz[baseline=-.6ex, scale=.1]{
\draw[-<-=.8, overarc] (-3,1) to[out=north, in=south] (3,7);
\draw[->-=.8, overarc] (3,1) to[out=north, in=south] (-3,7);
\draw (0,-5) -- (0,0);
\draw[fill=white] (-6,-1) rectangle (6,1);
\node at (5,4) {${\scriptstyle {\dots}}$};
\node at (-5,4) {${\scriptstyle {\dots}}$};
\node at (-3,7) [above]{${\scriptstyle m}$};
\node at (3,7) [above]{${\scriptstyle n}$};
}\ }
=(-1)^{mn}q^{-\frac{mn}{6}}
\mathord{\ \tikz[baseline=-.6ex, scale=.1]{
\draw[->-=.5] (-2,1) -- (-2,7);
\draw[-<-=.5] (2,1) -- (2,7);
\draw (0,-5) -- (0,0);
\draw[fill=white] (-6,-1) rectangle (6,1);
\node at (5,4) {${\scriptstyle {\dots}}$};
\node at (-5,4) {${\scriptstyle {\dots}}$};
\node at (-2,7) [above]{${\scriptstyle m}$};
\node at (2,7) [above]{${\scriptstyle n}$};
}\ }
,\\
(4)&
\mathord{\ \tikz[baseline=-.6ex, scale=.1]{
\draw[->-=.8, overarc] (3,1) to[out=north, in=south] (-3,7);
\draw[->-=.8, overarc] (-3,1) to[out=north, in=south] (3,7);
\draw (0,-5) -- (0,0);
\draw[fill=white] (-6,-1) rectangle (6,1);
\node at (5,4) {${\scriptstyle {\dots}}$};
\node at (-5,4) {${\scriptstyle {\dots}}$};
\node at (-3,7) [above]{${\scriptstyle m}$};
\node at (3,7) [above]{${\scriptstyle n}$};
}\ }
=q^{\frac{mn}{3}}
\mathord{\ \tikz[baseline=-.6ex, scale=.1]{
\draw[->-=.5] (-2,1) -- (-2,7);
\draw[->-=.5] (2,1) -- (2,7);
\draw (0,-5) -- (0,0);
\draw[fill=white] (-6,-1) rectangle (6,1);
\node at (5,4) {${\scriptstyle {\dots}}$};
\node at (-5,4) {${\scriptstyle {\dots}}$};
\node at (-2,7) [above]{${\scriptstyle m}$};
\node at (2,7) [above]{${\scriptstyle n}$};
}\ }
,\quad
\mathord{\ \tikz[baseline=-.6ex, scale=.1]{
\draw[->-=.8, overarc] (-3,1) to[out=north, in=south] (3,7);
\draw[->-=.8, overarc] (3,1) to[out=north, in=south] (-3,7);
\draw (0,-5) -- (0,0);
\draw[fill=white] (-6,-1) rectangle (6,1);
\node at (5,4) {${\scriptstyle {\dots}}$};
\node at (-5,4) {${\scriptstyle {\dots}}$};
\node at (-3,7) [above]{${\scriptstyle m}$};
\node at (3,7) [above]{${\scriptstyle n}$};
}\ }
=q^{-\frac{mn}{3}}
\mathord{\ \tikz[baseline=-.6ex, scale=.1]{
\draw[->-=.5] (-2,1) -- (-2,7);
\draw[->-=.5] (2,1) -- (2,7);
\draw (0,-5) -- (0,0);
\draw[fill=white] (-6,-1) rectangle (6,1);
\node at (5,4) {${\scriptstyle {\dots}}$};
\node at (-5,4) {${\scriptstyle {\dots}}$};
\node at (-2,7) [above]{${\scriptstyle m}$};
\node at (2,7) [above]{${\scriptstyle n}$};
}\ }
,\\
(5)&\mathord{\ \tikz[baseline=-.6ex, scale=.1]{
\draw[->-=.8] (0,0) -- (-10,0);
\draw[->-=.8] (0,0) -- (10,0);
\draw[-<-=.2] (0,-8) -- (0,0);
\draw[fill=white] (-4,-3) -- (0,4) -- (4,-3) -- cycle;
\draw[fill=white] (-3,-5) rectangle (3,-4);
\draw[fill=white] (-6,-3) rectangle (-5,3);
\draw[fill=white] (6,-3) rectangle (5,3);
\node at (-8,0) [above]{${\scriptstyle n}$};
\node at (8,0) [above]{${\scriptstyle n}$};
\node at (0,-7) [right]{${\scriptstyle n}$};
}\ }
=\mathord{\ \tikz[baseline=-.6ex, scale=.1]{
	\draw[->-=.8] (0,0) -- (-10,0);
	\draw[->-=.8] (0,0) -- (10,0);
	\draw[-<-=.2] (0,-8) -- (0,0);
	\draw[fill=white] (-4,-3) -- (0,4) -- (4,-3) -- cycle;
	\draw[fill=white] (-3,-5) rectangle (3,-4);
	\draw[fill=white] (-6,-3) rectangle (-5,3);
	\node at (-8,0) [above]{${\scriptstyle n}$};
	\node at (6,0) [above]{${\scriptstyle n}$};
	\node at (0,-7) [right]{${\scriptstyle n}$};
}\ },
\mathord{\ \tikz[baseline=-.6ex, scale=.1]{
	\draw[-<-=.8] (0,0) -- (-10,0);
	\draw[-<-=.8] (0,0) -- (10,0);
	\draw[->-=.2] (0,-8) -- (0,0);
	\draw[fill=white] (-4,-3) -- (0,4) -- (4,-3) -- cycle;
	\draw[fill=white] (-3,-5) rectangle (3,-4);
	\draw[fill=white] (-6,-3) rectangle (-5,3);
	\draw[fill=white] (6,-3) rectangle (5,3);
	\node at (-8,0) [above]{${\scriptstyle n}$};
	\node at (8,0) [above]{${\scriptstyle n}$};
	\node at (0,-7) [right]{${\scriptstyle n}$};
}\ }
=\mathord{\ \tikz[baseline=-.6ex, scale=.1]{
	\draw[-<-=.8] (0,0) -- (-10,0);
	\draw[-<-=.8] (0,0) -- (10,0);
	\draw[->-=.2] (0,-8) -- (0,0);
	\draw[fill=white] (-4,-3) -- (0,4) -- (4,-3) -- cycle;
	\draw[fill=white] (-3,-5) rectangle (3,-4);
	\draw[fill=white] (-6,-3) rectangle (-5,3);
	\node at (-8,0) [above]{${\scriptstyle n}$};
	\node at (6,0) [above]{${\scriptstyle n}$};
	\node at (0,-7) [right]{${\scriptstyle n}$};
}\ },\\
(6)&\ \tikz[baseline=-.6ex, scale=.1,yshift=-2cm]{
	\draw[overarc] (-4,0) -- (-1,0) to[out=north east, in=east] (0,5);
	\draw[->-=.8, overarc] (0,5) to[out=west, in=north west] (1,0) -- (4,0);
	\draw (4,0) -- (6,0);
	\draw[fill=white] (4,-2) rectangle (5,2);
	\node at (4,2) [above]{${\scriptstyle n}$};
\ }
=q^{\frac{n^2+3n}{3}}
\!\tikz[baseline=-.6ex, scale=.1,yshift=-2cm]{
	\draw[->-=.5] (-4,0) -- (4,0);
	\draw (4,0) -- (6,0);
	\draw[fill=white] (3,-2) rectangle (4,2);
	\node at (3,2) [above]{${\scriptstyle n}$};
\ }
,\quad
\ \tikz[baseline=-.6ex, scale=.1,yshift=-2cm]{
	\draw[overarc] (0,5) to[out=west, in=north west] (1,0) -- (4,0);
	\draw[->-=.2, overarc] (-4,0) -- (-1,0) to[out=north east, in=east] (0,5);
	\draw (4,0) -- (6,0);
	\draw[fill=white] (4,-2) rectangle (5,2);
	\node at (4,2) [above]{${\scriptstyle n}$};
\ }
=q^{-\frac{n^2+3n}{3}}
\!\tikz[baseline=-.6ex, scale=.1,yshift=-2cm]{
	\draw[->-=.5] (-4,0) -- (4,0);
	\draw (4,0) -- (6,0);
	\draw[fill=white] (3,-2) rectangle (4,2);
	\node at (3,2) [above]{${\scriptstyle n}$};
\ }.
\end{align*}
\end{lem}
\begin{proof}
One can prove $(1)$--$(6)$ by using induction on labels and skein relations.
See \cite{Yuasa17,Yuasa20A} for exmaple.
\end{proof}

We give a definition of the one-row colored $\mathfrak{sl}_3$-Jones polynomial of oriented framed links via an $\mathfrak{sl}_3$-web.
Firstly, we introduce a normalization of a Laurent series by shifting the $q$-degree and changing the sign.
\begin{dfn}[Minimum degree]\label{degnormalization} 
    We define the \emph{minimum degree $\mathrm{d}_*\colon\cR=\mathbb{Z}((q^{\frac{1}{6}}))\to \frac{1}{6}\bZ\cup\infty$} by $\mdeg{f(q)}\coloneqq d/6$ for a non-zero series $f(q)=\sum_{i=d}^{\infty}a_{i}q^{\frac{i}{6}}$ in $\mathbb{Z}((q^{\frac{1}{6}}))$ such that $a_d\neq 0$.
    For the zero polynomial, we define its minimum degree as $\infty$.
    We also introduce a normalization $\hat{f}(q)$ of a non-zero Laurent series $f(q)$ as
    \[
	\hat{f}(q)\coloneqq \pm q^{-\mdeg{f(q)}}f(q)=\pm\sum_{i=0}^{\infty}a_{i+d}q^{\frac{i}{6}}\in\mathbb{Z}[[q^\frac{1}{6}]].
    \]
    In the above, we choose the sign so that the constant term $\pm a_{d}$ becomes positive.
\end{dfn}

We note some properties for the minimum degree and useful examples.
\begin{lem}\label{lem:deg}
	For any $f(q),g(q)\in \cR$,
	\begin{enumerate}
		\item $\mdeg{f(q)+g(q)}\geq\min\{\mdeg{f(q)},\mdeg{g(q)}\}$,	
		\item $\mdeg{f(q)g(q)}=\mdeg{f(q)}+\mdeg{g(q)}$.
	\end{enumerate}
    The equality in (1) holds if and only if $\mdeg{f(q)}\neq\mdeg{g(q)}$ or $\mdeg{f(q)}=\mdeg{g(q)}=:d$ with $a_d+b_d\neq 0$ where $f(q)=\sum_{i=d}^{\infty}a_{i}q^{\frac{i}{6}}$ and $g(q)=\sum_{i=d}^{\infty}b_{i}q^{\frac{i}{6}}$.
\end{lem}
\begin{eg}\label{eg:deg}
	For any positive integer $n$ and $1\leq k\leq n$,
	\begin{align*}
		&\mdeg{[n]}=-(n-1)/2,\quad \mdeg{[n]^{-1}}=(n-1)/2,\\
		&\mdeg{{n\brack k}}=-k(n-k)/2,\quad \mdeg{{n\brack k}^{-1}}=k(n-k)/2.
	\end{align*}
\end{eg}
We remark that one can confirm them by $(1-q^m)^{-1}=1+q^m+q^{2m}+\cdots\in\cR=\bZ((q^{\frac{1}{6}}))$ and \cref{lem:deg}.

\begin{dfn}
Let $L$ be a link diagram of a framed link whose framing is given by the blackboard framing.
One can replace arcs of the link diagram with $n$ parallelized arcs and put white boxes on the $n$ parallelized arcs.
The resulting diagram denoted by $L^{(n)}$ represents an $\mathfrak{sl}_3$-web in a disk $D$ with no marked points.\footnote{Such $\mathfrak{sl}_3$-web space $\W(\emptyset;D)$ is spanned by the empty diagram $\varnothing$, see \cref{eg:web-space}.}
The \emph{one-row colored $\mathfrak{sl}_3$-Jones polynomial $J^{\mathfrak{sl}_3}_{L,n}(q)$} with $(n,0)$-coloring (or $n$ boxes) is defined by
$L^{(n)}=J^{\mathfrak{sl}_3}_{L,n}(q)\varnothing$.
We also define a variation of the one-row colored $\mathfrak{sl}_3$-Jones polynomial as $\hat{J}^{\mathfrak{sl}_3}_{{L},{n}}(q)$ according to \cref{degnormalization}.
\end{dfn}

\begin{rmk}\label{rem:invariant}
	\begin{itemize}
		\item Skein relations realize the Reidemeister moves (R1')--(R4) for arcs with one-row colored clasps because clasped arcs are expressed as a linear combination of $\mathfrak{sl}_3$-webs. Hence $J^{\mathfrak{sl}_3}_{L,n}(q)$ is an invariant of framed links.
		\item The choice of framing of $L$ appears as multiplication by $\pm q^{\bullet}$, see $(6)$ in \cref{claspformula}. This difference is ignored in the normalization $\hat{J}^{\mathfrak{sl}_3}_{{L},{n}}(q)$, thus it is an invariant of links.
		\item L\^{e} showed the integrality theorem for a quantum $\mathfrak{g}$ invariant of links in~\cite{Le00}. 
		It says that $\hat{J}^{\mathfrak{sl}_3}_{{L},{n}}(q)$ belongs to $\mathbb{Z}[q]$.		
	\end{itemize}
\end{rmk}

We will discuss \emph{zero stability} of $\hat{J}^{\mathfrak{sl}_3}_{{L},{n}}(q)$ for a certain class of links in the following sections.
Let us recall the definition of zero stability and tails of the one-row colored $\mathfrak{sl}_3$-Jones polynomials.
\begin{dfn}[One-row colored $\mathfrak{sl}_3$-tail]
The one-row colored $\mathfrak{sl}_{3}$-Jones polynomial $\{\hat{J}^{\mathfrak{sl}_3}_{{L},{n}}(q)\}_{n}$ of a link $L$ is \emph{zero stable} if there exists a formal power series $\Phi^{\mathfrak{sl}_{3}}_{L}(q)$ in $\mathbb{Z}[[q]]$ such that 
\[
		\Phi^{\mathfrak{sl}_{3}}_{L}(q)-\hat{J}^{\mathfrak{sl}_3}_{{L},{n}}(q)\in q^{n+1}\mathbb{Z}[[q]]
\]
for all $n\geq 1$.
We call $\Phi^{\mathfrak{sl}_{3}}_{L}(q)$ the \emph{one-row colored $\mathfrak{sl}_3$-tail} of $L$ or simply the \emph{$\mathfrak{sl}_3$-tail} of $L$ when $\{\hat{J}^{\mathfrak{sl}_3}_{{L},{n}}(q)\}_{n}$ is zero stable.
\end{dfn}
\section{The minimum degree of clasped $\mathfrak{sl}_3$-webs}\label{sec:degree}
We will prove the existence of the $\mathfrak{sl}_3$-tail of the one-row colored $\mathfrak{sl}_3$-Jones polynomial by developing $\mathfrak{sl}_3$ analog of Armond's argument using the Kauffman bracket in \cite{Armond13}.
In the present section, we will discuss a lower bound of the minimum degree of a clasped $\mathfrak{sl}_3$-web with no crossings in a disk.	

First, we prepare a lemma that studies isomorphisms in \cref{eg:web-space} in detail
\begin{lem}\label{lem:reduction-seq}
    Let $G\in\G({-}{+};D)$ be a connected flat trivalent graph in a disk $D$ with two marked points such that $G\neq 0$.
    There exists a sequence composed only of $2$- and $4$-gon relations such that it reduces $G$ to an oriented simple arc $\gamma$ without increasing the number of connected components of intermediate graphs.
\end{lem}
\begin{proof}
    Let us assume that $G$ has at least one elliptic face and $G\neq 0$.
    We only have to attend to the $4$-gon relation because the $2$-gon relation does not change the number of connected components of a graph.
    Let us prove the claim by induction on the number $v(G)$ of vertices.
    A connected flat trivalent graph $G$ with $v(G)=2$ has to become a diagram in the left-hand side of the $2$-gon relation in \cref{A2skein}, and $G$ with $v(G)=3$ does not exist because a trivalent vertex is a sink or source.
    Let $G$ be a connected flat trivalent graph with $v(G)\geq 4$ and we assume that it has at least one internal $4$-gon $F$.
    \cref{eg:web-space}~(2) claims that we cannot describe a circle in $D$ which intersect with $G$ at a single edge.
    \cref{eg:web-space}~(5) claims that no circle intersects two incoming (resp. outgoing) edges and one outgoing (rep. incoming) edge of $G$.
    This fact requires that four edges incident to corners of $F$ connect to other parts of $G$ as (i) or (ii) in below;
	\[
            \text{(i)}~
            \tikz[baseline=-.6ex, scale=.2]{
			\draw (0,1.5) -- (-1.5,0) -- (0,-1.5) -- (1.5,0) -- cycle;
			\draw (0,1.5) -- (4,1.5);
			\draw (1.5,0) -- (4,0);
			\draw (0,-1.5) -- (4,-1.5);
			\draw (-1.5,0) -- (-4,0);
			\draw[fill=lightgray] (3,-2) rectangle (5,2);
			\draw[fill=lightgray] (-3,-2) rectangle (-5,2);
			\draw[thick] (-6,-2) -- (-6,2);
			\draw[thick] (6,-2) -- (6,2);
			\draw (-6,0) -- (-5,0);
			\draw (6,0) -- (5,0);
			\draw[fill=cyan] (-6,0) circle [radius=.2];
			\draw[fill=cyan] (6,0) circle [radius=.2];
			\node at (0,0) {\scriptsize $F$};
			\node at (-4,0) {\scriptsize $X_1$};
			\node at (4,0) {\scriptsize $X_2$};
		}\ \hspace{2em}
		\text{(ii)}~
		\tikz[baseline=-.6ex, scale=.2]{
			\draw (1,1) -- (1,-1) -- (-1,-1) -- (-1,1) -- cycle;
			\draw (1,1) -- (1,4);
			\draw (-1,1) -- (-1,4);
			\draw (-1,-1) -- (-4,-1);
			\draw (1,-1) -- (4,-1);
			\draw[fill=lightgray] (3,-3) rectangle (5,1);
			\draw[fill=lightgray] (-3,-3) rectangle (-5,1);
			\draw[fill=lightgray] (-2,2) rectangle (2,4);
			\draw[thick] (-6,-3) -- (-6,1);
			\draw[thick] (6,-3) -- (6,1);
			\draw (-6,-1) -- (-5,-1);
			\draw (6,-1) -- (5,-1);
			\draw[fill=cyan] (-6,-1) circle [radius=.2];
			\draw[fill=cyan] (6,-1) circle [radius=.2];
			\node at (0,0) {\scriptsize $F$};
			\node at (-4,-1) {\scriptsize $X_1$};
			\node at (4,-1) {\scriptsize $X_2$};
			\node at (0,3) {\scriptsize $X_3$};
		}\ 
	\]
	where subgraphs $X_1$, $X_2$, and $X_3$ of $G$ are connected.
	We apply the $4$-gon relation at $F$ in (i);
	\[
		\tikz[baseline=-.6ex, scale=.2]{
			\draw (0,1.5) -- (-1.5,0) -- (0,-1.5) -- (1.5,0) -- cycle;
			\draw (0,1.5) -- (4,1.5);
			\draw (1.5,0) -- (4,0);
			\draw (0,-1.5) -- (4,-1.5);
			\draw (-1.5,0) -- (-4,0);
			\draw[fill=lightgray] (3,-2) rectangle (5,2);
			\draw[fill=lightgray] (-3,-2) rectangle (-5,2);
			\draw[thick] (-6,-2) -- (-6,2);
			\draw[thick] (6,-2) -- (6,2);
			\draw (-6,0) -- (-5,0);
			\draw (6,0) -- (5,0);
			\draw[fill=cyan] (-6,0) circle [radius=.2];
			\draw[fill=cyan] (6,0) circle [radius=.2];
			\node at (0,0) {\scriptsize $F$};
			\node at (-4,0) {\scriptsize $X_1$};
			\node at (4,0) {\scriptsize $X_2$};
		}\ 
		=
		\tikz[baseline=-.6ex, scale=.2]{
			\draw (0,1.5) -- (-1.5,0); 
			\draw (0,-1.5) -- (1.5,0);
			\draw (0,1.5) -- (4,1.5);
			\draw (1.5,0) -- (4,0);
			\draw (0,-1.5) -- (4,-1.5);
			\draw (-1.5,0) -- (-4,0);
			\draw[fill=lightgray] (3,-2) rectangle (5,2);
			\draw[fill=lightgray] (-3,-2) rectangle (-5,2);
			\draw[thick] (-6,-2) -- (-6,2);
			\draw[thick] (6,-2) -- (6,2);
			\draw (-6,0) -- (-5,0);
			\draw (6,0) -- (5,0);
			\draw[fill=cyan] (-6,0) circle [radius=.2];
			\draw[fill=cyan] (6,0) circle [radius=.2];
			\node at (-4,0) {\scriptsize $X_1$};
			\node at (4,0) {\scriptsize $X_2$};
		}\ 
		+
		\tikz[baseline=-.6ex, scale=.2]{
			\draw (-1.5,0) -- (0,-1.5);
			\draw (0,1.5) -- (1.5,0);
			\draw (0,1.5) -- (4,1.5);
			\draw (1.5,0) -- (4,0);
			\draw (0,-1.5) -- (4,-1.5);
			\draw (-1.5,0) -- (-4,0);
			\draw[fill=lightgray] (3,-2) rectangle (5,2);
			\draw[fill=lightgray] (-3,-2) rectangle (-5,2);
			\draw[thick] (-6,-2) -- (-6,2);
			\draw[thick] (6,-2) -- (6,2);
			\draw (-6,0) -- (-5,0);
			\draw (6,0) -- (5,0);
			\draw[fill=cyan] (-6,0) circle [radius=.2];
			\draw[fill=cyan] (6,0) circle [radius=.2];
			\node at (-4,0) {\scriptsize $X_1$};
			\node at (4,0) {\scriptsize $X_2$};
		}\ .
	\]
    Graphs after applying the $4$-gon relation are divided into the left and right parts containing $X_1$ and $X_2$, respectively, by cutting along a vertical line in $D$. 
    Right and left subgraphs are considered $\mathfrak{sl}_3$-web in a disk with two marked points whose number of vertices is smaller than $v(G)$. 
    The right subgraph containing $X_2$ is connected due to \cref{eg:web-space}~(2).
    Hence these subgraphs satisfy the induction hypothesis.
    For case (ii), a sequence of the $2$- and $4$-gon relations changes $X_3$ into an arc without increasing the number of components because $v(X_3)<v(G)$. Then, one can obtain a graph consisting of $X_1$ and $X_2$ by applying the $2$-gon relation twice.
    The resulting graph also satisfies the induction hypothesis. 
\end{proof}

\begin{prop}\label{graphdeg}
    Let $G$ be a flat trivalent graph in a disk with no marked points, and we identify $G$ with its value in $\cR$ (see \cref{eg:web-space}). 
    Then,
    \begin{align*}
        \mdeg{G}\geq-\frac{v(G)}{4}-c(G),
    \end{align*}
    where $v(G)$ is the number of trivalent vertices of $G$, $c(G)$ the number of connected components of $G$.
    Moreover, the equality $\mdeg{G}=-c(G)$ holds when $v(G)=0$.
\end{prop}
\begin{proof}
    We first prove $\mdeg{G}=-c(G)$ when $v(G)=0$.
    If $G$ has no trivalent vertices, then it consists only of loop components.
    By using an innermost argument and the trivial loop relation in \cref{A2skein}, it is easy to see that $G=[3]^{c(G)}$.
    We obtain $\mdeg{G}=c(G)\mdeg{[3]}=-c(G)$ because $[3]=q+1+q^{-1}$.

    Let us consider when $G=\sqcup_{i=1}^{c(G)} G_i$ is a non-trivial flat trivalent graph with $v(G)>0$ where $G_i$'s are connected components of $G$.
    Choose a point $p_i$ on the outermost edge of $G_i$ and a small interval $I_{p_i}$ for each $i=1,2,\ldots, c(G)$.
    Then, one can take disks $\{D_i\}_i^{c(G)}$ with two marked points for all $i=1,2,\ldots c(G)$ such that $G_i\setminus \mathrm{int}(I_{p_i})\subset D_i$, $\partial I_{p_i}$ are identical with its marked points, and $D_i\cap D_j=\emptyset$ for $i\neq j$.
    Apply \cref{lem:reduction-seq} to $G_i\cap D_i$ for all $i\in\{1,2,\ldots, c(G)\}$ and we obtain a disjoint union $\Gamma\coloneqq \sqcup_{i=1}^{c(G)}\gamma_i$ of simple loops from $G$. 
    Each component $\gamma_i$ is obtained from $G_i$ by a sequence of $2$- and $4$-gon relations preserving the number of connected components.
    Let $G=G'+G''$ be a $4$-gon relation appearing in the above sequence, and we can assume that $G'\neq 0$ and $\mdeg{G'}\leq\mdeg{G''}$ without loss of generality.
    Then,
	\begin{itemize}
		\item $\mdeg{G}\geq\min\{\mdeg{G'},\mdeg{G''}\}=\mdeg{G'}$,
		\item $v(G)=v(G')+4$, and
		\item $c(G)=c(G')$.
	\end{itemize}
    Thus, 
    \[
	\mdeg{G}+\frac{1}{4}v(G)+c(G)\geq\mdeg{G'}+\frac{1}{4}v(G')+c(G')+1.
    \]
    Suppose instead that $G'$ is obtained by a $2$-gon relation, that is, $G=[2]G'$.
    Then, 
    \begin{itemize}
	\item $\mdeg{G}=\mdeg{G'}-\frac{1}{2}$,
	\item $v(G)=v(G')+2$, and
	\item $c(G)=c(G')$.
    \end{itemize}
    Thus,
    \[
	\mdeg{G}+\frac{1}{4}v(G)+c(G)=\mdeg{G'}+\frac{1}{4}v(G')+c(G').
    \]
    As mentioned above, we can choose a reduction sequence from $G$ to $\Gamma$ so that flat trivalent graphs in this sequence satisfy the above inequality for the minimum degree.
    We remark that $\mdeg{\Gamma}=-c(\Gamma)$ because $v(\Gamma)=0$.
    Hence, $G$ and $\Gamma$ should satisfy
    \begin{align*}
        \mdeg{G}+\frac{1}{4}v(G)+c(G)\geq\mdeg{\Gamma}+\frac{1}{4}v(\Gamma)+c(\Gamma)
        =0.
    \end{align*}
\end{proof}
	
Next, we give a lower bound of the minimum degree of a flat trivalent graph with $\mathfrak{sl}_3$-clasps.
Let us consider the minimum degree of coefficients appearing in expansion formulas of $\mathfrak{sl}_3$-clasps.

\begin{lem}[The single clasp expansion formula~{\cite[Proposition~3.1]{Kim07}}]\label{singleexp}
For any positive integer $m$,
\[
	\mathrm{JW}_{{-}^{m}}
	=
	\,\tikz[baseline=-.6ex, scale=.1]{
		\draw[->-=.3,->-=.9] (0,-3) -- (0,4);
		\draw[fill=white] (-3,0) rectangle (3,1);
		\node at (0,3) [left]{${\scriptstyle m}$};
		}\,
	=\sum_{j=0}^{m-1}f_{j}^{(m)}(q)
	\mathord{\,\tikz[baseline=-.6ex, scale=.1, yshift=-2cm]{
		\draw[->-=.2, ->-=.9, rounded corners] (-4,-3) -- (-4,3) -- (2,3) -- (2,8);
		\draw[->-=.8] (-1,0) -- (-1,8);
		\draw[->-=.5] (4,0) -- (4,8);
		\draw[->-=.5] (1,-3) -- (1,0);
		\draw[fill=white] (-3,2) rectangle (1,4);
		\draw (-3,4) -- (1,2);
		\draw[fill=white] (-3,0) rectangle (5,1);
		\node at (-1,8) [above]{${\scriptstyle j}$};
		\node at (2,8) [above]{${\scriptstyle 1}$};
		\node at (3,8) [above right]{${\scriptstyle m-j-1}$};
		\node at (1,-3) [below]{${\scriptstyle m-1}$};
		\node at (-4,-3) [below]{${\scriptstyle 1}$};
		}
	\,},
\]
where $f_{j}^{(m)}(q)\coloneqq (-1)^{j}\frac{[m-j]}{[m]}$.
\end{lem}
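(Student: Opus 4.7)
The plan is to prove the identity by induction on $m$, using the recursive definition of the one-row $A_2$ clasp from Definition~\ref{onerowclasp}(2). For the base case $m=1$, the left-hand side is a single strand by part~(1) of that definition, while the right-hand side contains only the $j=0$ summand with coefficient $[1]/[1]=1$; its diagram, which has a width-zero clasp and no extra strands on the right, reduces to a single strand, so equality holds.

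For the inductive step I assume the single-clasp expansion is known for $JW_{-^m}$ and prove it for $JW_{-^{m+1}}$. Applying Definition~\ref{onerowclasp}(2) gives
\[
JW_{-^{m+1}} \;=\; \bigl(JW_{-^m}\otimes\mathrm{id}\bigr) \;-\; \frac{[m]}{[m+1]}\, R_m,
\]
where $R_m$ denotes the explicit correction diagram containing two copies of $JW_{-^m}$ joined by an $(m-1)$-strand and a cap--cup pair. Into the first summand I substitute the induction hypothesis, producing exactly the summands indexed by $j=0,\dots,m-1$ of the target formula, but with preliminary coefficients $(-1)^{j}[m-j]/[m]$ in place of the required $(-1)^{j}[m+1-j]/[m+1]$. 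Into the top copy of $JW_{-^m}$ inside $R_m$ I also substitute the induction hypothesis; because the cap--cup feeds two of the strands of the top clasp to the bottom clasp, most summands of the inserted expansion are killed by the cap-absorbing property of $JW_{-^m}$ recorded in Lemma~\ref{claspformula}. The surviving summands simplify via the bigon value $[2]$, the clasp's idempotency, and one reverse application of the recursion, producing both the missing $j=m$ summand of the target formula and precisely the numerical corrections $(-1)^{j}[j]/([m][m+1])$ to each coefficient at position $j=0,\dots,m-1$.

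The coefficient bookkeeping is closed by the quantum integer identity
\[
\frac{[m-j]}{[m]} \;+\; \frac{[j]}{[m][m+1]} \;=\; \frac{[m+1-j]}{[m+1]}\qquad (0\le j\le m-1),
\]
which is equivalent to $[m][m+1-j]-[m+1][m-j]=[j]$. The sign alternation $(-1)^{j}$ is preserved because the prefactor $-[m]/[m+1]$ in front of $R_m$ combines with the existing induction-hypothesis sign of the surviving summand at each position.

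The main obstacle is the diagrammatic bookkeeping inside $R_m$: one must track how inserting the single-clasp expansion of $JW_{-^m}$ interacts with the cap--cup pair so that exactly the right summand survives at each position $j$, and verify that the residual diagram matches the stair-step/triangular structure appearing on the right-hand side of the lemma. I would isolate this into a short absorption lemma showing that a summand of the expansion placed against a cap on two adjacent strands vanishes unless its extracted strand sits adjacent to the cap. Once that auxiliary vanishing is in place, what remains is routine algebra of quantum integers via the identity above.
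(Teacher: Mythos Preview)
The paper does not prove this lemma; it is quoted from \cite[Proposition~3.1]{Kim07} and used as input.  Your inductive strategy via Definition~\ref{onerowclasp}(2) is the natural one, but the execution has a concrete gap in the diagrammatic matching.

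The recursion in Definition~\ref{onerowclasp}(2) peels off the \emph{rightmost} strand, while the single-clasp expansion separates the \emph{leftmost} one.  When you insert the induction hypothesis into $JW_{-^m}\otimes\mathrm{id}$, each resulting diagram carries $JW_{-^{m-1}}$ on strands $2,\dots,m$ together with an untouched identity strand in position $m{+}1$; the target summand at level $m{+}1$ instead carries $JW_{-^m}$ on strands $2,\dots,m{+}1$.  These webs are not equal, so the substitution does not ``produce exactly the summands indexed by $j=0,\dots,m-1$ of the target formula'' as you claim.  The same left/right mismatch affects $R_m$: its bottom clasp sits on strands $1,\dots,m$, not $2,\dots,m{+}1$.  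A secondary point: the connector in $R_m$ is a pair of trivalent vertices joined by an internal edge, not a cap--cup pair.  Lemma~\ref{claspformula} still supplies the needed absorption, but the local simplifications (bigon and square relations, interaction with the stair-step) are genuinely different from the Temperley--Lieb picture your language suggests.

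One clean repair is to first invoke the reflection symmetry of the one-row clasp to obtain a left-handed recursion $JW_{-^{m+1}}=(\mathrm{id}\otimes JW_{-^m})-\tfrac{[m]}{[m+1]}R'_m$, so that every bottom clasp already sits on strands $2,\dots,m{+}1$.  The first term is then literally the $j=0$ summand, and the induction hypothesis applied to the top clasp of $R'_m$ (followed by absorption of the inner $JW_{-^{m-1}}$ into the bottom $JW_{-^m}$) yields the remaining summands once you check that the trivalent rung of $R'_m$ composes with the level-$m$ stair-step to give the level-$(m{+}1)$ stair-step.  Alternatively, if you keep the right-handed recursion, you must upgrade each $JW_{-^{m-1}}\otimes\mathrm{id}$ at the bottom to $JW_{-^m}$ via a second application of the recursion and track the resulting cross-terms; your identity $[m][m{+}1-j]-[m{+}1][m-j]=[j]$ would then enter legitimately, but the diagrammatic bookkeeping is substantially heavier than your outline acknowledges.
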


One can obtain the following lemma from the single clasp expansion formula and induction on $m$.

\begin{lem}\label{onerowexpansion}
    The one-row colored $\mathfrak{sl}_3$-clasp has an expansion
    \[
        \mathrm{JW}_{{-}^{m}}=\sum_{M}f_{M}(q)M
    \]
    with $\mdeg{f_{M}(q)}=v(M)/4$ where the sum runs over finitely many flat trivalent graphs $M$, and $v(M)$ is the number of trivalent vertices in $M$.
    We remark that $M$ may contain $4$-gons or $2$-gons.
\end{lem}
\begin{proof}
    It is obvious that the claim is true for $m=1,2$.
    We prove it by induction on $m$.
    A flat trivalent graph in the right-hand side of the single clasp expansion of $\mathrm{JW}_{{-}^m}$ has a stair-step web with $2j$ vertices and $\mathrm{JW}_{{-}^{m-1}}$.
    We know that $\mdeg{f_j^{(m)}(q)}=j/2$ by \cref{eg:deg}.
    \cref{singleexp} and the induction hypothesis derive an expansion;
    \[
        \mathrm{JW}_{{-}^{m}}
        =\sum_{j=0}^{m-1}\sum_{M}f_j^{(m)}(q)f_{M}(q)
        \mathord{\,\tikz[baseline=-.6ex, scale=.1, yshift=-2cm]{
            \draw[->-=.2, ->-=.9, rounded corners] (-4,-3) -- (-4,3) -- (2,3) -- (2,8);
            \draw[->-=.8] (-1,0) -- (-1,8);
            \draw[->-=.5] (4,0) -- (4,8);
            \draw (1,-3) -- (1,0);
            \draw[fill=white] (-3,2) rectangle (1,4);
            \draw (-3,4) -- (1,2);
            \draw[fill=lightgray] (-2.5,-2) rectangle (4.5,1);
            \node at (-1,8) [above]{${\scriptstyle j}$};
            \node at (2,8) [above]{${\scriptstyle 1}$};
            \node at (3,8) [above right]{${\scriptstyle m-j-1}$};
            \node at (1,-3) [below]{${\scriptstyle m-1}$};
            \node at (-4,-3) [below]{${\scriptstyle 1}$};
            \node at (1,-.5) {${\scriptstyle M}$};
            }
            \,}
    \]
    where $\mdeg{f_M(q)}=v(M)/4$.
    The flat trivalent graph in the right-hand side has $2j+v(M)$ vertices, and $\mdeg{f_j^{(m)}(q)f_{M}(q)}=\mdeg{f_j^{(m)}(q)}+\mdeg{f_{M}(q)}=(2j+v(M))/4$ by \cref{lem:deg}.
    This expansion satisfies the condition of our claim.
\end{proof}

\begin{rmk}\label{rmk:I-web}
    The proof of \cref{onerowexpansion} can be used to show that $M$ constructed by composing $I_j$s where
	\[
		I_j\coloneqq \mathord{\,\tikz[baseline=-.6ex, scale=.1]{
			\draw[-<-=.5] (0,-2) -- (0,2);
			\draw[->-=.5] (-2,-5) to[out=north, in=west] (0,-2);
			\draw[->-=.5] (2,-5) to[out=north, in=east] (0,-2);
			\draw[-<-=.5] (-2,5) to[out=south, in=west] (0,2);
			\draw[-<-=.5] (2,5) to[out=south, in=east] (0,2);
			\draw[->-=.5] (-4,-5) -- (-4,5);
			\draw[->-=.5] (-10,-5) -- (-10,5);
			\node at (-7,0) {${\scriptstyle \cdots}$};
			\draw[->-=.5] (4,-5) -- (4,5);
			\draw[->-=.5] (10,-5) -- (10,5);
			\node at (7,0) {${\scriptstyle \cdots}$};
			\node at (-2,-5) [below]{${\scriptstyle j}$};
			\node at (2,-5) [below]{${\scriptstyle j+1}$}
			}\,
        \in \TL{{-}^{m}}{{-}^{m}}
		}
        \]
    for $j=1,2,\dots,m-1$.
    Labels $j$ and $j+1$ in the bottom mean the $j$- and $(j+1)$-th marked points, respectively.
\end{rmk}

\begin{lem}\label{tworowexpansion}
The two-row colored $\mathfrak{sl}_3$-clasp has the following expansion:
\[
	\mathrm{JW}_{{-}^{m}{+}^{n}}
	=\sum_{t=0}^{\min\{m,n\}}\sum_{M_1,M_2,M_3,M_4}f_{(m,n;t)}(M_1,M_2,M_3,M_4;q)\BNet{(m,n;t)}{M_1}{M_2}{M_3}{M_4}
\]
with $\mdeg{f_{(m,n;t)}(M_1,M_2,M_3,M_4;q)}=\frac{t(t+1)}{2}+\sum_{i=1}^{4}\frac{1}{4}v\left(M_i\right)$ where
\[
    \BNet{(m,n;t)}{M_1}{M_2}{M_3}{M_4}
    \coloneqq \mathord{
	\ \tikz[baseline=-.6ex, scale=.1]{
            \draw[->-=.7] (-4,7) -- (-4,10);
            \draw[-<-=.7] (4,7) -- (4,10);
            \draw[-<-=.7, yscale=-1] (-4,7) -- (-4,10);
            \draw[->-=.7, yscale=-1] (4,7) -- (4,10);
            \draw[->-=.5] (-5,-3) -- (-5,3);
            \draw[-<-=.5] (5,-3) -- (5,3);
            \draw[-<-=.5] (-3,3) to[out=south, in=south] (3,3);
            \draw[->-=.5, yscale=-1] (-3,3) to[out=south, in=south] (3,3);
            \draw[fill=lightgray] (-7,3) rectangle (-2,7);
            \draw[fill=lightgray, xscale=-1] (-7,3) rectangle (-2,7);
            \draw[fill=lightgray, yscale=-1] (-7,3) rectangle (-2,7);
            \draw[fill=lightgray, xscale=-1, yscale=-1] (-7,3) rectangle (-2,7);
            \node at (-5,0) [left]{${\scriptstyle m-t}$};
            \node at (5,0) [right]{${\scriptstyle n-t}$};
            \node at (0,4) {${\scriptstyle t}$};
            \node at (0,-4) {${\scriptstyle t}$};
            \node at (-4,8) [left]{${\scriptstyle m}$};
            \node at (4,8) [right]{${\scriptstyle n}$};
            \node at (-4,-8) [left]{${\scriptstyle m}$};
            \node at (4,-8) [right]{${\scriptstyle n}$};
            \node at (-4,5) {${\scriptstyle M_{4}}$};
            \node at (-4,-5) {${\scriptstyle M_{1}}$};
            \node at (4,5) {${\scriptstyle M_{3}}$};
            \node at (4,-5) {${\scriptstyle M_{2}}$};
	}\ 
    }.
\] 
\end{lem}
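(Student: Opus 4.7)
The plan is to combine Definition~\ref{tworowclasp}, which already expresses ${JW}_{{-}^{m}{+}^{n}}$ as a sum over $t$ with explicit scalar coefficients and diagrams built from four one-row clasps, with four simultaneous applications of the one-row clasp expansion (Lemma~\ref{onerowexpansion}) to the corner clasps. First I would write
\[
{JW}_{{-}^{m}{+}^{n}} = \sum_{t=0}^{\min\{m,n\}} (-1)^t \frac{{m \brack t}{n \brack t}}{{m+n+1 \brack t}}\, D_t,
\]
where $D_t$ is the diagram from Definition~\ref{tworowclasp} with a bridge of $t$ strands at top and bottom and four one-row clasps occupying the four corner boxes. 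Applying Lemma~\ref{onerowexpansion} to each corner clasp and distributing then gives
\[
D_t = \sum_{M_1,M_2,N_1,N_2} f_{M_1}(q)\, f_{M_2}(q)\, f_{N_1}(q)\, f_{N_2}(q)\, g_t(M_1\otimes M_2\otimes N_1\otimes N_2),
\]
where the $M_i$ and $N_i$ are the $I_j$-compositions from the one-row expansion and fit into the four gray boxes of $g_t$ as drawn in the statement.

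Next I would compute the minimum degree of the scalar coefficient. Using $\mdeg{{n \brack k}} = -\frac{k(n-k)}{2}$ from the example following Definition~\ref{degnormalization}, one finds
\[
\mdeg{\frac{{m \brack t}{n \brack t}}{{m+n+1 \brack t}}} = -\frac{t(m-t)}{2} - \frac{t(n-t)}{2} + \frac{t(m+n+1-t)}{2} = \frac{t(t+1)}{2},
\]
which is precisely the extra $\frac{t(t+1)}{2}$ appearing in the target formula. Combined with the four estimates $\mdeg{f_{M_i}} = \frac{1}{4}v(M_i)$ and $\mdeg{f_{N_i}} = \frac{1}{4}v(N_i)$ from Lemma~\ref{onerowexpansion}, the total minimum degree of the combined coefficient $f_t(M_1,M_2,N_1,N_2;q)$ becomes $\frac{1}{4}\bigl(v(M_1)+v(M_2)+v(N_1)+v(N_2)\bigr) + \frac{t(t+1)}{2}$.

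To close the argument, I would verify the identity $v\bigl(g_t(M_1\otimes M_2\otimes N_1\otimes N_2)\bigr) = v(M_1)+v(M_2)+v(N_1)+v(N_2)$, which I expect to be the main bookkeeping obstacle in a rigorous write-up. Inspection of $g_t$ shows that apart from the four gray boxes, the ambient diagram consists only of through strands of multiplicities $m-t$ and $n-t$ together with two cap arcs of multiplicity $t$ forming the top and bottom bridges; none of this carries any trivalent vertex. Consequently every trivalent vertex of $g_t$ lies inside one of the $M_i$ or $N_i$, and additivity of $v$ under planar composition yields the required identity. Substituting this into the previous paragraph gives $\mdeg{f_t(M_1,M_2,N_1,N_2;q)} = \frac{1}{4} v\bigl(g_t(M_1\otimes M_2\otimes N_1\otimes N_2)\bigr) + \frac{t(t+1)}{2}$, which is exactly the claim.
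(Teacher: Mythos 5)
Your proposal is correct and follows essentially the same route as the paper: substitute the explicit two-row clasp formula of Definition~\ref{tworowclasp}, expand the four corner one-row clasps via Lemma~\ref{onerowexpansion}, and observe that $\mdeg{\frac{{m \brack t}{n \brack t}}{{m+n+1 \brack t}}}=\frac{t(t+1)}{2}$. The paper states this in two lines; your write-up merely makes explicit the degree computation and the bookkeeping identity $v(g_t)=v(M_1)+v(M_2)+v(N_1)+v(N_2)$, both of which are correct.
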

\begin{proof}
    Apply \cref{onerowexpansion} to each one-row colored $\mathfrak{sl}_3$-clasp in the right-hand side of \cref{tworowclasp}.
    Then, we obtain an expansion as in the statement such that
    \[
        f_{(m,n;t)}(M_1,M_2,M_3,M_4;q)\coloneqq (-1)^{t}\frac{{m \brack t}{n \brack t}}{{m+n+1 \brack t}}f_{M_{1}}(q)f_{M_{2}}(q)f_{M_{3}}(q)f_{M_{4}}(q).
    \]
    One can calculate the minimum degree as follows:
    \begin{align*}
        \mdeg{f_{(m,n;t)}(M_1,M_2,M_3,M_4;q)}
        &=\mdeg{(-1)^t\frac{{m \brack t}{n \brack t}}{{m+n+1 \brack t}}}+\sum_{i=1}^4\mdeg{f_{M_i}(q)}\\
        &=\frac{t(t+1)}{2}+\sum_{i=1}^4\frac{v(M_i)}{4}.
    \end{align*}
\end{proof}

We introduce a notation of a planar algebra specializing in our situation because it is useful for writing $\mathfrak{sl}_3$-webs in the form of an equation.
Let $D$ be a disk, and $D_i$ $(i=1,2,\ldots,k)$ specified disjoint $k$ rectangles in $D\setminus\partial D$. 
$D_i$ is homeomorphic to $[0,1]\times[0,1]$ and it has a base point at $(0,0)$ and marked points $P_i=P_i^{(0)}\sqcup P_i^{(1)}$.
The set $P^{(j)}_{i}$ of marked points lies in the edge of $D_i$ corresponding to $[0,1]\times\{j\}$.
We have a $k$-holed disk $D(k)\coloneqq D\setminus \cup_{i=1}^{k}\mathrm{int}(D_i)$ with marked points $\cup_{i=1}^{k} P_i$.
A small disk $D_i$ share $P_i$ with $D(k)$ for $i=1,2,\ldots,k$, see \cref{fig:holeddisk}.
Let a sequence of signs $\bs_i^{(0)}$ (resp.~$\bs_i^{(1)}$) be an assignment of signs to a set of marked points $P_i^{(0)}$ (resp.~$P_i^{(1)}$) of $D(k)$ for each $i=1,2,\ldots,k$.
Then, we consider the following $\mathfrak{sl}_3$-web spaces:
\begin{itemize}
	\item $\W(\cup_{i=1}^k\bs_i;D(k))$ where $\bs_i=\bs_i^{(0)}\cup\bs_i^{(1)}$,
	\item $\TL{\bar{\bs}_i^{(0)}}{\bs_i^{(1)}}=\W(\bar{\bs}_i^{(0)}\cup\bar{\bs}_i^{(1)};D_i)$ for $i=1,2,\ldots,k$.
\end{itemize}
As I mentioned above, $D(k)$ and $D_i$ share the set of marked points $P_i=P_i^{(0)}\sqcup P_i^{(1)}$.
Then, the sequence of sign $\bs_i^{(j)}$ of $P_i^{(j)}$ in $D(k)$ consistent with $\bar{\bs}_i^{(j)}$ of $P_i^{(j)}$ in $D_i$ for $j=1,2$.
For example, an edge terminating at $p\in P_i$ in $D(k)$ can be composed with an edge starting from $p$ in $D_i$. Thus, the sign of $p$ in $D(k)$ and $D_i$ are different.
For a tangled trivalent graph $\mathsf{G}\in\G(\cup_{i=1}^k\bs_i;D(k))$, 
a linear map
\[
	\mathsf{G}\colon\bigotimes_{i=1}^{k}\TL{\bar{\bs}_i^{(0)}}{\bs_i^{(1)}}\to\W(\emptyset,D)\cong\mathbb{Z}((q^{\frac{1}{6}}))
\]
is induced by a map $D(k)\sqcup \left( D_1\sqcup D_2\sqcup\cdots\sqcup D_k\right)\to D$.
This map composes $\mathfrak{sl}_3$-webs in $D_i$ ($i=1,2,\ldots,k$) with $\mathsf{G}$ in $D(k)$.

In this paper, we only consider a segregated sign sequence $\bs_i^{(0)}=\epsilon^{m_i}\bar{\epsilon}^{n_i}$ and $\bs_i^{(1)}=\bar{\bs}_i^{(0)}=\bar{\epsilon}^{m_i}\epsilon^{n_i}$ where $\epsilon$ is $+$ or $-$ and $m_i,n_i\in\mathbb{Z}_{{}\geq 0}$ satisfy $m_i+n_i=\#P_i^{(0)}=\#P_i^{(1)}$.
The \emph{identity web} denoted by $\ID_{\bs_i^{(1)}}$ in $\TL{\bar{\bs}_i^{(0)}}{\bs_i^{(1)}}$ is $(m_{i}+n_{i})$ parallel strands in $D_i$. 
The identity web $\ID_{\bs_i^{(1)}}$ and the $\mathfrak{sl}_3$-clasp $\mathrm{JW}_{\bs_i^{(1)}}$ in $D_i$ are simply denoted by $\ID_{D_i}$ and $\mathrm{JW}_{D_i}$, respectively.
\begin{figure}
	\begin{tikzpicture}[scale=.1]
		\draw (0,0) circle [radius=20];
		\begin{scope}[yshift=10cm]
			\draw[fill=lightgray!50] (-5,-3) rectangle (5,3);
			\node at (0,0) {\small $1$};
			\node at (-5,-3) {$\ast$};
			\draw[fill] (-2,-3) circle [radius=.5];
			\draw[fill=white] (2,-3) circle [radius=.5];
			\draw[fill=white] (-2,3) circle [radius=.5];
			\draw[fill] (2,3) circle [radius=.5];
		\end{scope}
		\begin{scope}[xshift=10cm, yshift=-3cm]
			\draw[fill=lightgray!50] (-5,-3) rectangle (5,3);
			\node at (0,0) {\small $2$};
			\node at (-5,-3) {$\ast$};
			\draw[fill=white] (-2,-3) circle [radius=.5];
			\draw[fill=white] (2,-3) circle [radius=.5];
			\draw[fill] (-2,3) circle [radius=.5];
			\draw[fill] (2,3) circle [radius=.5];
		\end{scope}
		\begin{scope}[yshift=-5cm, xshift=-8cm]
			\draw[fill=lightgray!50] (-7,-4) rectangle (7,4);
			\node at (0,0) {\small $3$};
			\node at (-7,-4) {$\ast$};
			\draw[fill=white] (-4,-4) circle [radius=.5];
			\draw[fill] (0,-4) circle [radius=.5];
			\draw[fill] (4,-4) circle [radius=.5];
			\draw[fill] (-4,4) circle [radius=.5];
			\draw[fill=white] (0,4) circle [radius=.5];
			\draw[fill=white] (4,4) circle [radius=.5];
		\end{scope}
		\node at (-12,5) {\small $D(3)$};
	\end{tikzpicture}
\caption{It is a $k$-holed disk $D(k)$ with $k=3$. A shaded rectangle labeled by $i$ is $D_i$. Marked points with sign ${+}$ (resp.~${-}$) are described as black (resp.~white) dots. In this case, $\bs_{1}^{(0)}=\bar{\bs}_{1}^{(1)}={+}{-}$, $\bs_{2}^{(0)}=\bar{\bs}_{2}^{(1)}={-}{-}$, and $\bs_{3}^{(0)}=\bar{\bs}_{3}^{(1)}={-}{+}{+}$. A tangled trivalent graph $\mathsf{G}$ defines $\TL{{-}{+}}{{-}{+}}\otimes\TL{{+}{+}}{{+}{+}}\otimes\TL{{+}{-}{-}}{{+}{-}{-}}\to\cR$. }
\label{fig:holeddisk}
\end{figure}

\begin{dfn}\label{dfn:adequate}
    Set $\bs_i^{(0)}=\bar{\bs}_i^{(1)}=\epsilon^{m_i}\bar{\epsilon}^{n_i}$ for all $i=1,2,\ldots,k$.
    $\mathsf{G}\in\G(\cup_{i=1}^k\bs_i;D(k))$ is \emph{adequate} if 
    \begin{itemize}
        \item $\mathsf{G}$ is a disjoint union of oriented simple arcs, and
        \item For every $j=1,2,\ldots,k$, any pair of strands in $\ID_{D_j}$ belongs to different connected components of the graph $\mathsf{G}(\otimes_{i=1}^{k}\ID_{D_i})$  which composed of oriented simple loops.
    \end{itemize}
    See \cref{adequateweb}.
    We also call the clasped $\mathfrak{sl}_3$-web $\mathsf{G}(\otimes_{i=1}^{k}\mathrm{JW}_{D_i})$ is \emph{adequate} when $\mathsf{G}$ is adequate.
\end{dfn}
	
	\begin{eg}\label{adequateweb}
	The left $\mathfrak{sl}_3$-web is adequate, and the right is not adequate because of the red arc.
	\[
	\mathord{\ \tikz[baseline=-.6ex, scale=.1]
	  {
		\draw[->-=.5, rounded corners] (-12,1) -- (-14,1) -- (-14,12) -- (-4,12) -- (-4,10);
		\draw[->-=.5, rounded corners] (-4,10) -- (-4,1) -- (-10,1);
		\draw[->-=.5, rounded corners] (-12,-1) -- (-14,-1) -- (-14,-12) -- (-4,-12) -- (-4,-10);
		\draw[->-=.5, rounded corners] (-4,-10) -- (-4,-1) -- (-10,-1);
		\draw[->-=.5, rounded corners] (-2,-8) -- (-2,-1) -- (10,-1);
		\draw[-<-=.5, rounded corners] (-2,-10) -- (-2,-13) -- (14,-13) -- (14,-1) -- (12,-1);
		\draw[-<-=.5, rounded corners] (6,-4) -- (4,-4) -- (4,-10) -- (6,-10);
		\draw[->-=.5, rounded corners] (8,-4) -- (10,-4) -- (10,-10) -- (8,-10);
		\draw[-<-=.5, rounded corners] (-2,8) -- (-2,1) -- (10,1);
		\draw[->-=.5, rounded corners] (-2,10) -- (-2,12) -- (14,12) -- (14,1) -- (10,1);
		\draw[fill=lightgray!50] (-12,3) rectangle (-10,-3);
		\draw[fill=lightgray!50] (-6,-10) rectangle (0,-8);
		\draw[fill=lightgray!50] (-6,8) rectangle (0,10);
		\draw[fill=lightgray!50] (12,-3) rectangle (10,3);
		\draw[fill=lightgray!50] (8,0) rectangle (6,-6);
		\draw[fill=lightgray!50] (8,-8) rectangle (6,-14);
		\node at (-12,3) {\small $\ast$};
		\node at (-6,-10) {\small $\ast$};
		\node at (-6,8) {\small $\ast$};
		\node at (12,-3) {\small $\ast$};
		\node at (6,0) {\small $\ast$};
		\node at (6,-8) {\small $\ast$};
		}
	}
	\quad\quad
	\mathord{\ \tikz[baseline=-.6ex, scale=.1]
	  {
	  \draw[->-=.5, rounded corners, very thick, red] (-12,1) -- (-12,12) -- (-4,12) -- (-4,10);
		\draw[->-=.5, rounded corners, very thick, red] (-4,10) -- (-4,4) -- (-10,4) -- (-10,1);
	  \draw[-<-=.5, rounded corners, very thick, red] (-12,-1) -- (-12,-12) -- (-4,-12) -- (-4,-10);
		\draw[-<-=.5, rounded corners, very thick, red] (-4,-10) -- (-4,-4) -- (-10,-4) -- (-10,-1);
		\draw[->-=.5, rounded corners] (-2,-8) -- (-2,-1) -- (10,-1);
		\draw[-<-=.5, rounded corners] (-2,-10) -- (-2,-13) -- (14,-13) -- (14,-1) -- (12,-1);
		\draw[-<-=.5, rounded corners] (6,-4) -- (4,-4) -- (4,-10) -- (6,-10);
		\draw[->-=.5, rounded corners] (8,-4) -- (10,-4) -- (10,-10) -- (8,-10);
		\draw[-<-=.5, rounded corners] (-2,8) -- (-2,1) -- (10,1);
		\draw[->-=.5, rounded corners] (-2,10) -- (-2,12) -- (14,12) -- (14,1) -- (10,1);
		\draw[fill=lightgray!50] (-14,-1) rectangle (-8,1);
		\draw[fill=lightgray!50] (-6,-10) rectangle (0,-8);
		\draw[fill=lightgray!50] (-6,10) rectangle (0,8);
		\draw[fill=lightgray!50] (12,-3) rectangle (10,3);
		\draw[fill=lightgray!50] (8,0) rectangle (6,-6);
		\draw[fill=lightgray!50] (8,-8) rectangle (6,-14);
		\node at (-14,-1) {\small $\ast$};
		\node at (-6,-10) {\small $\ast$};
		\node at (-6,8) {\small $\ast$};
		\node at (12,-3) {\small $\ast$};
		\node at (6,0) {\small $\ast$};
		\node at (6,-8) {\small $\ast$};
		}
	}
	\]
	\end{eg}
	

\begin{prop}\label{claspgraphdeg}
    Let $D(k)=D\setminus \sqcup_{i=1}^k \mathrm{int}(D_i)$ be a $k$ holed disk with signed marked point $\bs_i^{(0)}=\bar{\bs}_i^{(1)}=\epsilon^{m_i}\bar{\epsilon}^{n_i}$ on the $i$-th boundary component for $i=1,2,\dots,k$.
    For any flat trivalent graph $\mathsf{G}$ in $\G(\cup_{i=1}^k\bs_i;D(k))$,
    \[
        \mdeg{\mathsf{G}(\otimes_{i=1}^{k}\mathrm{JW}_{D_i})}\geq -\frac{v(\mathsf{G})}{4}-c\left(\mathsf{{G}}(\otimes_{i=1}^{k}{\ID}_{D_i})\right).
    \]
    Particularly, $\mdeg{\mathsf{G}(\otimes_{i=1}^{k}\mathrm{JW}_{D_i})}\geq\mdeg{\mathsf{{G}}(\otimes_{i=1}^{k}{\ID}_{D_i})}$ holds when $\mathsf{G}$ has no trivalent vertices due to \cref{graphdeg}.
    Moreover, the equality $\mdeg{\mathsf{G}(\otimes_{i=1}^{k}\mathrm{JW}_{D_i})}=\mdeg{\mathsf{{G}}(\otimes_{i=1}^{k}{\ID}_{D_i})}$ holds when $\mathsf{G}$ is adequate.
\end{prop}
\begin{proof}
\cref{tworowexpansion} expands all $\mathfrak{sl}_3$-clasps in $D_i$ ($i=1,2,\ldots,k$) as below.
\begin{align*}
    &\mathsf{G}(\otimes_{i=1}^{k}\mathrm{JW}_{D_i})
    =\sum_{t_{1}=0}^{\min\{m_{1},n_{1}\}}\!\cdots\hspace{-1em}\sum_{t_{k}=0}^{\min\{m_{k},n_{k}\}}\sum_{M^{(1)}}\cdots\sum_{M^{(k)}}
    \Big(\prod_{i=1}^{k}f_{t_{i}}(M^{(i)};q)\Big)
    \mathsf{G}(\otimes_{i=1}^{k}{\uparrow\!M^{(i)}\!\downarrow}_{t_{i}}),
\end{align*}
where
\begin{itemize}
    \item $\sum_{M^{(i)}}$ means a summation over $M_{1}^{(i)},M_{2}^{(i)},M_{3}^{(i)},M_{4}^{(i)}$,
    \item ${\uparrow\!\!M^{(i)}\!\!\downarrow}_{t_{i}}$ is the $\mathfrak{sl}_3$-web $\BNet{(m_i,n_i;t_i)}{M_1^{(i)}}{M_2^{(i)}}{M_3^{(i)}}{M_4^{(i)}}$ defined in \cref{tworowexpansion}, and
    \item $f_{t_{i}}(M^{(i)};q)\coloneqq f_{(m_i,n_i;t_{i})}(M_1^{(i)},M_2^{(i)},M_3^{(i)},M_4^{(i)};q)$ with $\mdeg{f_{t_{i}}(M^{(i)};q)}=\frac{t_i(t_i+1)}{2}+\sum_{j=1}^4\frac{1}{4}v(M^{(i)}_j)$.
\end{itemize}
We remark that
$v(\mathsf{G}(\otimes_{i=1}^{k}{\uparrow\!M^{(i)}\!\downarrow}_{t_{i}}))=\sum_{i=1}^{k}\sum_{j=1}^4v(M_j^{(i)})+v(\mathsf{G})$ by definition. 
\begin{align}
    &\mdeg{\Big(\prod_{i=1}^{k}f_{t_{i}}(M^{(i)};q)\Big)
    \mathsf{G}(\otimes_{i=1}^{k}{\uparrow\!M^{(i)}\!\downarrow}_{t_{i}})}\label{eq:clasped-graph-deg1}\\
    &\quad=\sum_{i=1}^k\frac{t_i(t_i+1)}{2}+\sum_{i=1}^k\sum_{j=1}^4\frac{v(M_j^{(i)})}{4}+\mdeg{\mathsf{G}(\otimes_{i=1}^{k}{\uparrow\!M^{(i)}\!\downarrow}_{t_{i}})}.\notag
\end{align}
\cref{graphdeg} gives a lower bound of $\mdeg{\mathsf{G}(\otimes_{i=1}^{k}{\uparrow\!M^{(i)}\!\downarrow}_{t_{i}})}$ using the number of vertices and connected components;
\begin{align*}
    \mdeg{\mathsf{G}(\otimes_{i=1}^{k}{\uparrow\!M^{(i)}\!\downarrow}_{t_{i}})}
    &\geq -\frac{1}{4}\left(\sum_{i=1}^{k}\sum_{j=1}^4 v(M_j^{(i)})+v(\mathsf{G})\right)-c(\mathsf{G}(\otimes_{i=1}^{k}{\uparrow\!M^{(i)}\!\downarrow}_{t_{i}})).
\end{align*}

Let $\mathsf{G}(\otimes_{i=1}^{k}{\uparrow\!\ID^{(i)}\!\downarrow}_{t_{i}})$ be an $\mathfrak{sl}_3$-web such that $\mathsf{G}(\otimes_{i=1}^{k}{\uparrow\!\ID^{(i)}\!\downarrow}_{t_{i}})\cap D(k)=\mathsf{G}$ and 
$\mathsf{G}(\otimes_{i=1}^{k}{\uparrow\!\ID^{(i)}\!\downarrow}_{t_{i}})\cap D_i={\uparrow\!\!\ID^{(i)}\!\!\downarrow}_{t_{i}}$ where 
$
{\uparrow\!\!\ID^{(i)}\!\!\downarrow}_{t_{i}}
\coloneqq 
\mathord{
	\ \tikz[baseline=-.6ex, scale=.1]{
	\draw[->-=.5] (-5,-5) -- (-5,5);
	\draw[-<-=.5] (5,-5) -- (5,5);
	\draw[-<-=.5] (-3,5) to[out=south, in=south] (3,5);
	\draw[->-=.5, yscale=-1] (-3,5) to[out=south, in=south] (3,5);
	\node at (-5,0) [left]{${\scriptstyle m_{i}-t_{i}}$};
	\node at (5,0) [right]{${\scriptstyle n_{i}-t_{i}}$};
	\node at (0,0) {${\scriptstyle t_{i}}$};
	}\ 
}
$.
In other words, $\mathsf{G}(\otimes_{i=1}^{k}{\uparrow\!\ID^{(i)}\!\downarrow}_{t_{i}})$ is obtained by replacing all $M_{j}^{(i)}$ in $\mathsf{G}(\otimes_{i=1}^{k}{\uparrow\!M^{(i)}\!\downarrow}_{t_{i}})$ with identity webs.

\cref{rmk:I-web} says that $\mathsf{G}(\otimes_{i=1}^{k}{\uparrow\!M^{(i)}\!\downarrow}_{t_{i}})$ is obtained from $\mathsf{G}(\otimes_{i=1}^{k}{\uparrow\!\ID^{(i)}\!\downarrow}_{t_{i}})$ by a sequence of \emph{zip cobordisms} which replace parallel strands with $I_j$'s.
Then, $c(\mathsf{G}(\otimes_{i=1}^{k}{\uparrow\!M^{(i)}\!\downarrow}_{t_{i}}))\leq c(\mathsf{G}(\otimes_{i=1}^{k}{\uparrow\!\ID^{(i)}\!\downarrow}_{t_{i}}))$ holds because a zip cobordism reduces the number of connected components.
Combining these inequalities with \cref{eq:clasped-graph-deg1}, we obtain
\begin{align}
    &\mdeg{\Big(\prod_{i=1}^{k}f_{t_{i}}(M^{(i)};q)\Big)\mathsf{G}(\otimes_{i=1}^{k}{\uparrow\!M^{(i)}\!\downarrow}_{t_{i}})}
    \geq
    \sum_{i=1}^k\frac{t_i(t_i+1)}{2}-\frac{v(\mathsf{G})}{4}-c(\mathsf{G}(\otimes_{i=1}^{k}{\uparrow\!\ID^{(i)}\!\downarrow}_{t_{i}})).\label{eq:clasped-graph-deg2}
\end{align}
We remark that one can make a similar argument when $\mathrm{JW}_{D_i}$s have one-row colored $\mathfrak{sl}_3$-clasps by using \cref{onerowexpansion}.
If the $i$-th disk $D_i$ has a one-row colored $\mathfrak{sl}_3$-clasp, then we read ${\uparrow\!M^{(i)}\!\downarrow}_{t_{i}}$ as $M^{(i)}$, $f_{t_{i}}(M^{(i)};q)$ as $f_{M^{(i)}}(q)$, and replace $\sum_{j=1}^{4}v(M_{j}^{(i)})/4$ with $v(M^{(i)})/4$.

Finally, we observe how the right-hand side of \cref{eq:clasped-graph-deg2} changes by a single \emph{orientable saddle cobordism}, which transforms $t_i$ to $t_i+1$.
One can see that the single orientable saddle cobordism changes $c(\mathsf{G}(\otimes_{i=1}^{k}{\uparrow\!\ID^{(i)}\!\downarrow}_{t_{i}}))$ by $\pm 1$ by considering the orientation of strands in ${\uparrow\!\!\ID^{(i)}\!\!\downarrow}_{t_{i}}$.
Hence, the right-hand side of \cref{eq:clasped-graph-deg2} is a monotonically increasing function on $0 \leq t_{i}\leq \min\{m_{i},n_{i}\}$.
Consequently,
\begin{align*}
    \mdeg{\Big(\prod_{i=1}^{k}f_{t_{i}}(M^{(i)};q)\Big)\mathsf{G}(\otimes_{i=1}^{k}{\uparrow\!M^{(i)}\!\downarrow}_{t_{i}})}
    &\geq -\frac{v(\mathsf{G})}{4}-c(\mathsf{G}(\otimes_{i=1}^{k}{\uparrow\!\ID^{(i)}\!\downarrow}_{0}))\\
    &= -\frac{v(\mathsf{G})}{4}-c(\mathsf{G}(\otimes_{i=1}^{k}\ID_{D_i})),
\end{align*}
because ${\uparrow\!\ID^{(i)}\!\downarrow}_{0}=\ID_{D_i}$ by definition.

When $v(\mathsf{G})=0$, the right-hand side becomes $\mdeg{\mathsf{{G}}(\otimes_{i=1}^{k}{\ID}_{D_i})}$ by \cref{graphdeg}.
Moreover, if $\mathsf{G}$ is adequate,
$c(\mathsf{G}(\otimes_{i=1}^{k}{\uparrow\!\ID^{(i)}\!\downarrow}_{t_i}))$ strictly decreases by the orientable saddle cobordism changing $t_i$ from $0$ to $1$ for some $i$.
Hence, we obtain $\mdeg{\mathsf{G}(\otimes_{i=1}^{k}\mathrm{JW}_{D_i})}=\mdeg{\mathsf{{G}}(\otimes_{i=1}^{k}{\ID}_{D_i})}$.
We remark that one can make a similar argument when all $\mathrm{JW}_{D_i}$ are one-row colored $\mathfrak{sl}_3$-clasps and $G$ is adequate.
\end{proof}
\section{Zero stability of the one-row colored $\mathfrak{sl}_3$-Jones polynomial}\label{sect:stability}

We show the zero stability of the one-row colored $\mathfrak{sl}_3$-tail for a certain class of $B$-adequate oriented links.

\begin{dfn}\label{dfn:B-adequate}
    Let $D$ be a disk equipped with twist regions $\sqcup_{i=1}^{k}D_i$ in $\mathrm{int}(D)$ such that all $D_i$s are isomorphic to $[0,1]\times [0,1]$ with a base point at $(0,0)$ and an assignment $\bm{l}\colon \{D_i\}_{i=1}^k\to 2\mathbb{Z}_{>0}$.
    An \emph{anti-parallel $B$-adequate link} is an oriented link represented by an oriented link diagram $L$ in $D$ satisfying the following condition:
    \begin{itemize}
        \item $L\cap D(k)$ is an adequate graph $\mathsf{G}$ where $D(k)\coloneqq D\setminus\sqcup_{i=1}^{k}\mathrm{int}(D_i)$, and
        \item $L\cap D_i$ is a twist region $R_{l_i}$ with negative $l_i\coloneqq \bm{l}(D_i)$ half twists of antiparallel strands for each $i$, see \cref{fig:twist-region}.
    \end{itemize}
\end{dfn}

\begin{figure}
    \[
		R_{l_{i}}=
		\tikz[baseline=-.6ex, scale=.1]{
            \draw[overarc] (-5,-7) to[out=north, in=south] (5,-2);
		    \draw[overarc] (5,-7) to[out=north, in=south] (-5,-2);
		    \draw[overarc] (-5,2) to[out=north, in=south] (5,7);
		    \draw[overarc] (5,2) to[out=north, in=south] (-5,7);
            \draw[-<-=.2] (-5,7) -- (-5,9);
            \draw[->-=.2] (5,7) -- (5,9);
            \draw[-<-=.8] (-5,-9) -- (-5,-7);
            \draw[->-=.8] (5,-9) -- (5,-7);
            \draw (-9,9) rectangle (9,-9);
		    \node at (-9,-9) {$\ast$};
		    \node at (0,0) [rotate=90]{${\cdots}$};
        }\ \text{ or }
		\tikz[baseline=-.6ex, scale=.1]{
            \draw[overarc] (-5,-7) to[out=north, in=south] (5,-2);
		    \draw[overarc] (5,-7) to[out=north, in=south] (-5,-2);
		    \draw[overarc] (-5,2) to[out=north, in=south] (5,7);
		    \draw[overarc] (5,2) to[out=north, in=south] (-5,7);
            \draw[->-=.2] (-5,7) -- (-5,9);
            \draw[-<-=.2] (5,7) -- (5,9);
            \draw[->-=.8] (-5,-9) -- (-5,-7);
            \draw[-<-=.8] (5,-9) -- (5,-7);
            \draw (-9,9) rectangle (9,-9);
		    \node at (-9,-9) {$\ast$};
		    \node at (0,0) [rotate=90]{${\cdots}$};
        }\ 
    \]
    \caption{The diagram $R_{l_i}$ has $l_i$ crossings where $l_i\in 2\bZ_{>0}$.}
    \label{fig:twist-region}    
\end{figure}

\begin{eg}\label{eg:B-adequate}
	The diagram below represents an anti-parallel $B$-adequate link.
	\[
		\mathord{\ 
			\tikz[baseline=-.6ex, scale=.2]{
    			\draw[-<-=.5, rounded corners] (-12,1) -- (-14,1) -- (-14,12) -- (-4,12) -- (-4,10);
    			\draw[-<-=.5, rounded corners] (-4,6) -- (-4,1) -- (-6,1);
    			\draw[->-=.5, rounded corners] (-12,-1) -- (-14,-1) -- (-14,-12) -- (-4,-12) -- (-4,-10);
    			\draw[->-=.5, rounded corners] (-4,-6) -- (-4,-1) -- (-6,-1);
                \draw[-<-=.3, rounded corners] (-2,-6) -- (-2,-1) -- (6,-1);
    			\draw[-<-=.5, rounded corners] (8,-1) -- (10,-1);
    			\draw[->-=.5, rounded corners] (-2,-10) -- (-2,-13) -- (6,-13);
    			\draw[->-=.5, rounded corners] (8,-13) -- (14,-13) -- (14,-1) -- (12,-1);
    			\draw[-<-=.5, rounded corners] (6,-4) -- (4,-4) -- (4,-10) -- (6,-10);
    			\draw[->-=.5, rounded corners] (8,-4) -- (10,-4) -- (10,-10) -- (8,-10);
    			\draw[->-=.5, rounded corners] (-2,6) -- (-2,1) -- (10,1);
    			\draw[-<-=.5, rounded corners] (-2,10) -- (-2,12) -- (14,12) -- (14,1) -- (12,1);
                \begin{scope}[xscale=.8, xshift=-3cm]
                    \draw (-12,1) to[out=east, in=west] (-10,-1);
                    \draw[overarc] (-12,-1) to[out=east, in=west] (-10,1);
                    \draw[overarc] (-10,1) to[out=east, in=west] (-8,-1);
                    \draw[overarc] (-10,-1) to[out=east, in=west] (-8,1);
                    \draw[overarc] (-8,1) to[out=east, in=west] (-6,-1);
                    \draw[overarc] (-8,-1) to[out=east, in=west] (-6,1);
                    \draw[overarc] (-6,1) to[out=east, in=west] (-4,-1);
                    \draw[overarc] (-6,-1) to[out=east, in=west] (-4,1);    
                \end{scope}
    			\draw[overarc] (-4,-10) to[out=north, in=south] (-2,-8);
    			\draw[overarc] (-2,-10) to[out=north, in=south] (-4,-8);
    			\draw[overarc] (-4,-8) to[out=north, in=south] (-2,-6);
    			\draw[overarc] (-2,-8) to[out=north, in=south] (-4,-6);
    			\draw[overarc] (-4,6) to[out=north, in=south] (-2,8);
    			\draw[overarc] (-2,6) to[out=north, in=south] (-4,8);
    			\draw[overarc] (-4,8) to[out=north, in=south] (-2,10);
    			\draw[overarc] (-2,8) to[out=north, in=south] (-4,10);
                \begin{scope}[xscale=.8, xshift=3.5cm]
                    \draw[overarc] (12,-1) to[out=west, in=east] (10,1);
                    \draw[overarc] (12,1) to[out=west, in=east] (10,-1);
                    \draw[overarc] (10,-1) to[out=west, in=east] (8,1);
                    \draw[overarc] (10,1) to[out=west, in=east] (8,-1);
                \end{scope}
                \begin{scope}[xscale=.8, xshift=2cm]
                    \draw[overarc] (8,-4) to[out=west, in=east] (6,-1);
                    \draw[overarc] (8,-1) to[out=west, in=east] (6,-4);
                    \draw[overarc] (6,-4) to[out=west, in=east] (4,-1);
                    \draw[overarc] (6,-1) to[out=west, in=east] (4,-4);
                \end{scope}
                \begin{scope}[xscale=.8, xshift=2cm]
                    \draw[overarc] (6,-10) to[out=east, in=west] (8,-13);
                    \draw[overarc] (8,-10) to[out=west, in=east] (6,-13);
                    \draw[overarc] (4,-10) to[out=east, in=west] (6,-13);
                    \draw[overarc] (6,-10) to[out=west, in=east] (4,-13);
                \end{scope}    			
    		}
    	\ }
    \]
\end{eg}
\begin{eg}[Plumbed-like links]\label{eg:plumbed}
    Let $X$ be a planar embedded graph equipped with a weight $\bm{l}\colon E(X)\to 2\mathbb{Z}_{\geq 0}$ for the edge set $E(X)$.
    Then, we obtain an anti-parallel $B$-adequate link diagram from $X$ by replacing all vertices with positively oriented circles and then adding a twist $R_{\bm{l}(e)}$ between two circles connected by an edge $e\in E(X)$.
\end{eg}

Before we prove the zero stability for anti-parallel $B$-adequate links, let us introduce some symbols for values of special $\mathfrak{sl}_3$-webs and coefficients.
We can describe the one-row colored $\mathfrak{sl}_3$-Jones polynomial by using these values.

\begin{lem}\label{evaluation}
    \begin{align*}
        \Delta^{(n)}(j)&=\frac{[n-j+1]^2[2n-2j+2]}{[2]},\quad 
        \Theta^{(n)}(j)=\frac{{2n-j+2\brack 2n-2j+2}}{{n\brack j}^{2}}\Delta^{(n)}(j),\\
        \gamma^{(n)}(j)&=(-1)^{n-j}q^{-\frac{1}{6}n^{2}}q^{-\frac{1}{2}j^{2}+(n+1)j},
    \end{align*}
    where
    \begin{align*}
        \Delta^{(n)}(j)=
        \tikz[baseline=-.6ex, scale=.1]{
            \draw[->-=.2] (0,0) circle [radius=5];
            \draw[-<-=.2] (0,0) circle [radius=8];
            \draw[fill=white] (-10,-1) rectangle (-3,1);
            \node at (0,8) [above]{${\scriptstyle n-j}$};
            \node at (0,4) [below]{${\scriptstyle n-j}$};
        }\ ,\quad
        \Theta^{(n)}(j)=
         \tikz[baseline=-.6ex, scale=.1]{
            \draw[-<-=.5] (-5,-8) -- (-5,-1);
            \draw[->-=.5] (-5,8) -- (-5,1);
            \draw[->-=.5] (5,-8) -- (5,-1);
            \draw[-<-=.5] (5,8) -- (5,1);
            \draw[->-=.5] (-2,8) to[out=south, in=west] (0,6) to[out=east, in=south] (2,8);
            \draw[-<-=.5] (-2,-8) to[out=north, in=west] (0,-6) to[out=east, in=north] (2,-8);
            \draw[->-=.5, rounded corners] (4,10) -- (4,12) -- (10,12) -- (10,-12) -- (4,-12) -- (4,-10);
            \draw[-<-=.5, rounded corners] (-4,10) -- (-4,12) -- (-10,12) -- (-10,-12) -- (-4,-12) -- (-4,-10);
            \draw[fill=white] (-6,8) rectangle (-1,10);
            \draw[fill=white] (6,8) rectangle (1,10);
            \draw[fill=white] (-6,-1) rectangle (6,1);
            \draw[fill=white] (-6,-8) rectangle (-1,-10);
            \draw[fill=white] (6,-8) rectangle (1,-10);
            \node at (0,6) [below]{${\scriptstyle j}$};
            \node at (0,-6) [above]{${\scriptstyle j}$};
            \node at (3,10) [above]{${\scriptstyle n}$};
            \node at (-3,10) [above]{${\scriptstyle n}$};
        }\ ,\quad
        \tikz[baseline=-.6ex, scale=.1, yshift=-9cm]{
            \draw[->-=.5] (-5,8) -- (-5,1);
            \draw[-<-=.5] (5,8) -- (5,1);
            \draw[->-=.5] (-2,8) to[out=south, in=west] (0,6) to[out=east, in=south] (2,8);
            \draw[overarc, -<-=.8] (-4,10) to[out=north, in=south] (4,17);
            \draw[overarc, ->-=.8] (4,10) to[out=north, in=south] (-4,17);
            \draw[fill=white] (-6,8) rectangle (-1,10);
            \draw[fill=white] (6,8) rectangle (1,10);
            \draw[fill=white] (-6,-1) rectangle (6,1);
            \node at (0,6) [below]{${\scriptstyle j}$};
            \node at (4,17) [right]{${\scriptstyle n}$};
            \node at (-4,17) [left]{${\scriptstyle n}$};
        }
        =\gamma^{(n)}(j)
        \tikz[baseline=-.6ex, scale=.1, yshift=-7cm]{
            \draw[-<-=.5] (-5,8) -- (-5,1);
            \draw[->-=.5] (5,8) -- (5,1);
            \draw[-<-=.5] (-2,8) to[out=south, in=west] (0,6) to[out=east, in=south] (2,8);
            \draw[->-=.5] (-4,10) -- (-4,14);
            \draw[-<-=.5] (4,10) -- (4,14);
            \draw[fill=white] (-6,8) rectangle (-1,10);
            \draw[fill=white] (6,8) rectangle (1,10);
            \draw[fill=white] (-6,-1) rectangle (6,1);
            \node at (0,6) [below]{${\scriptstyle j}$};
            \node at (4,14) [right]{${\scriptstyle n}$};
            \node at (-4,14) [left]{${\scriptstyle n}$};
        }.
    \end{align*}
\end{lem}
\begin{proof}
    It is well-known that the value of a closure of $JW_{{-}^{m}{+}^{n}}$ is obtained by quantum dimension $\frac{[m+1][n+1][m+n+2]}{[2]}$, and it is just $\Delta_j^{(n)}$.
    One can compute the value $\gamma$ by using \cref{claspformula}~(3),(4), and (6).
    $\Theta^{(n)}(j)$ was computed in \cite{Yuasa18}.  
\end{proof}
\begin{lem}\label{comparedeg}
    \begin{align*}
        \mdeg{\Delta^{(n)}(j)}&=\mdeg{\Delta^{(n)}(j-1)}+2,\\
        \mdeg{\Theta^{(n)}(j)}&=\mdeg{\Theta^{(n)}(j-1)}+1,\\
        \mdeg{\gamma^{(n)}(j)}&=\mdeg{\gamma^{(n)}(j-1)}+\left(n-j+\frac{3}{2}\right).
    \end{align*}
\end{lem}
\begin{proof}
    From \cref{lem:deg} and \cref{eg:deg}, one can compute the minimum degrees of $\Delta^{(n)}(j)$ and $\Theta^{(n)}(j)$ as $\mdeg{\Delta^{(n)}(j)}=-2n+2j$ and $\mdeg{\Theta^{(n)}(j)}=-2n+j$.
\end{proof}

An $n$-parallelization $L_n$ of the anti-parallel $B$-adequate link diagram $L$ defines an adequate graph $\mathsf{G}_n\coloneqq L_n\cap D(k)\in\Graph{\cup_{i=1}^{k}\bs_i}{D(k)}$ where $\bs_i^{(0)}=\bar{\bs}_i^{(1)}=\epsilon_i^n\bar{\epsilon}_i^n$ for some $\epsilon_i\in\{\pm\}$ and an $n$-parallelization $(R_{l_i})_n\coloneqq L_n\cap D_i$ of $l_i$ half twists for any $i$.
Then, we define a clasped $\mathfrak{sl}_3$-web $R_{l_i}^{(n)}$ in $D_i$ by inserting one-row colored $\mathfrak{sl}_3$-clasps for each $n$ parallelized strands of $(R_{l_i})_{n}$.
The one-row colored $\mathfrak{sl}_3$-Jones polynomial $J_{L,n}^{\mathfrak{sl}_3}(q)$ of the anti-parallel $B$-adequate link $L$ is obtained by replacing $(R_{l_i})_n$ of $L_n$ with $R_{l_i}^{(n)}$.
Using a linear map defined by $\mathsf{G}_n$, this replacement is described as $\mathsf{G}_{n}(\otimes_{i=1}^{k}({R_{l_i}}^{(n)}))$.

\begin{lem}\label{linkexpansion}
    For an anti-parallel $B$-adequate link diagam $L$ with twist region $D_i$
    \[
        L^{(n)}=\mathsf{G}_{n}(\otimes_{i=1}^{k}({R_{l_i}}^{(n)}))
        =\sum_{t_1,t_2\ldots,t_k=0}^{n}\prod_{i=1}^{k}\Gamma^{(n)}(t_{i};l_{i})\mathsf{G}_{n}(\otimes_{i=1}^{k}M(t_{i};n)),
    \]
    where 
    \[
        \Gamma^{(n)}(t_{i};l_{i})=\gamma^{(n)}(t_{i})^{l_{i}}\frac{\Delta^{(n)}(t_{i})}{\Theta^{(n)}(t_{i})},
    \]
    and
    \[
        M(t_{i};n)=
		\tikz[baseline=-.6ex, scale=.1]{
			\draw[->-=.7] (6,11) -- (6,14);
			\draw[-<-=.7] (-6,11) -- (-6,14);
			\draw[-<-=.7] (6,-11) -- (6,-14);
			\draw[->-=.7] (-6,-11) -- (-6,-14);
			\draw[-<-=.5] (-8,-10) -- (-8,0);
			\draw[->-=.5] (-8,10) -- (-8,0);
			\draw[->-=.5] (8,-10) -- (8,0);
			\draw[-<-=.5] (8,10) -- (8,0);
			\draw[->-=.5] (0,7) to[out=east, in=south] (3,10);
			\draw (0,7) to[out=west, in=south] (-3,10);
			\draw[-<-=.5] (0,-7) to[out=east, in=north] (3,-10);
			\draw (0,-7) to[out=west, in=north] (-3,-10);
			\draw[fill=white] (-10,10) rectangle (-2,11);
			\draw[fill=white] (10,10) rectangle (2,11);
			\draw[fill=white] (-10,-.5) rectangle (10,.5);
			\draw[fill=white] (-10,-10) rectangle (-2,-11);
			\draw[fill=white] (10,-10) rectangle (2,-11);
			\node at (2,7) [right]{${\scriptstyle t_{i}}$};
			\node at (2,-7) [right]{${\scriptstyle t_{i}}$};
			\node at (8,7) [right]{${\scriptstyle n-t_{i}}$};
			\node at (-8,7) [left]{${\scriptstyle n-t_{i}}$};
			\node at (8,-7) [right]{${\scriptstyle n-t_{i}}$};
			\node at (-8,-7) [left]{${\scriptstyle n-t_{i}}$};
			\node at (6,13) [right]{${\scriptstyle n}$};
			\node at (-6,13) [right]{${\scriptstyle n}$};
			\node at (6,-13) [right]{${\scriptstyle n}$};
			\node at (-6,-13) [right]{${\scriptstyle n}$};
		}\ 
	\text{or}
		\tikz[baseline=-.6ex, scale=.1]{
			\draw[-<-=.7] (6,11) -- (6,14);
			\draw[->-=.7] (-6,11) -- (-6,14);
			\draw[->-=.7] (6,-11) -- (6,-14);
			\draw[-<-=.7] (-6,-11) -- (-6,-14);
			\draw[->-=.5] (-8,-10) -- (-8,0);
			\draw[-<-=.5] (-8,10) -- (-8,0);
			\draw[-<-=.5] (8,-10) -- (8,0);
			\draw[->-=.5] (8,10) -- (8,0);
			\draw[-<-=.5] (0,7) to[out=east, in=south] (3,10);
			\draw (0,7) to[out=west, in=south] (-3,10);
			\draw[->-=.5] (0,-7) to[out=east, in=north] (3,-10);
			\draw (0,-7) to[out=west, in=north] (-3,-10);
			\draw[fill=white] (-10,10) rectangle (-2,11);
			\draw[fill=white] (10,10) rectangle (2,11);
			\draw[fill=white] (-10,-.5) rectangle (10,.5);
			\draw[fill=white] (-10,-10) rectangle (-2,-11);
			\draw[fill=white] (10,-10) rectangle (2,-11);
			\node at (2,7) [right]{${\scriptstyle t_{i}}$};
			\node at (2,-7) [right]{${\scriptstyle t_{i}}$};
			\node at (8,7) [right]{${\scriptstyle n-t_{i}}$};
			\node at (-8,7) [left]{${\scriptstyle n-t_{i}}$};
			\node at (8,-7) [right]{${\scriptstyle n-t_{i}}$};
			\node at (-8,-7) [left]{${\scriptstyle n-t_{i}}$};
			\node at (6,13) [right]{${\scriptstyle n}$};
			\node at (-6,13) [right]{${\scriptstyle n}$};
			\node at (6,-13) [right]{${\scriptstyle n}$};
			\node at (-6,-13) [right]{${\scriptstyle n}$};
		}.
    \]
\end{lem}
\begin{proof}
    Apply the formula  
    \[
        \ \tikz[baseline=-.6ex, scale=.1]{
            \draw[-<-=.3] (-2,-3) -- (-2,3);
            \draw[->-=.3] (2,-3) -- (2,3);
            \draw[fill=white] (-3,0) rectangle (-1,1);
            \draw[fill=white] (3,0) rectangle (1,1);
            \node at (-2,3) [above]{${\scriptstyle n}$};
            \node at (2,3) [above]{${\scriptstyle n}$};
        }\ 
        =\sum_{t=0}^{n}\frac{\Delta^{(n)}(t)}{\Theta^{(n)}(t)}M(t;n)
    \]
    shown in \cite{Yuasa18} to all twist regions, and resolve twists by definition of $\gamma^{(n)}(j)$ in \cref{evaluation}.
    We obtain the desired formula.
\end{proof}

\begin{lem}\label{lem:Gamma-deg}
    $\mdeg{\Gamma^{(n)}(t_{i};l_{i})}=\mdeg{\Gamma^{(n)}(t_{i}-1;l_{i})}+l_i\left(n-t_i+\frac{3}{2}\right)+1$.
\end{lem}
\begin{proof}
    By \cref{lem:deg} and \cref{comparedeg},
    \begin{align*}
        \mdeg{\Gamma^{(n)}(t_{i};l_{i})}
        &=l_{i}\mdeg{\gamma^{(n)}(t_{i})}+\mdeg{\Delta^{(n)}(t_{i})}-\mdeg{\Theta^{(n)}(t_{i})}\\
        &=l_{i}\mdeg{\gamma^{(n)}(t_{i}-1)}+\mdeg{\Delta^{(n)}(t_{i}-1)}-\mdeg{\Theta^{(n)}(t_{i}-1)}\\
        &\qquad +l_i\left(n-t_i+\frac{3}{2}\right)+1\\
        &=\mdeg{\Gamma^{(n)}(t_{i}-1;l_{i})}+l_i\left(n-t_i+\frac{3}{2}\right)+1
    \end{align*}
\end{proof}

\begin{prop}\label{linktograph}
Let $L$ be an anti-parallel $B$-adequate link diagram with twist regions $\sqcup_{i=1}^k D_i$, $\bm{l}\colon \{D_i\}_{i=1}^k\to 2\mathbb{Z}_{>0}$, and $\mathsf{G}=L\cap D(k)$.
Then,
\[
    \mathsf{G}_{n}(\otimes_{i=1}^{k}({R_{l_i}}^{(n)}))-\prod_{i=1}^{k}\gamma^{(n)}(0)^{l_{i}}\mathsf{G}_{n}(\otimes_{i=1}^{k}\mathrm{JW}_{D_i})\in q^{2(n+2)}+\mdeg{L^{(n)}}\mathbb{Z}[q].
\]
\end{prop}
\begin{proof}
    The proof is located below \cref{lem:step4}.
\end{proof}

To prove \cref{linktograph}, we prepare several lemmas.
First of all, let us introduce an operation $\saddle_{j}$ corresponding to the single orientable saddle cobordism at $D_j$ for $i=1,2,\ldots,k$.
More precisely, $\saddle_j$ acts on the set $\{\otimes_{i=1}^{k}\uparrow\!\ID^{(i)}\!\downarrow_{t_i} \mid 0\geq t_i\geq n\}$ of an $\mathfrak{sl}_3$-webs in $\sqcup_{i=1}^{k}D_i$ as follows:
$\saddle_j$ replaces $\uparrow\!\ID^{(j)}\!\downarrow_{t_j}$ with $\uparrow\!\ID^{(j)}\!\downarrow_{t_j-1}$, and acts by identity on $\uparrow\!\ID^{(j)}\!\downarrow_{0}$ in $D_j$ or elements in $D_i$ with $i\neq j$.
We remark that $\otimes_{i=1}^{k}\uparrow\!\ID^{(i)}\!\downarrow_{t_i}$ change to $\otimes_{i=1}^{k}\ID_{D_i}$ by a composition of orientable saddle cobordisms.
For instance, $\saddle_k^{t_k}\cdots\saddle_2^{t_2}\saddle_1^{t_1}$ realize this deformation.

\begin{lem}\label{lem:step1}
    For an adequate graph $\mathsf{G}$ of the anti-parallel $B$-adequate link diagram $L$ in \cref{linktograph} and any fixed tupple $(t_1,\ldots,t_k)\in\{0,\ldots,n\}^k$,
    \[
        \mdeg{\mathsf{G}_{n}(\otimes_{i=1}^{k}M(t_{i};n))}\geq\mdeg{\mathsf{G}_{n}(\otimes_{i=1}^{k}\uparrow\!\ID^{(i)}\!\downarrow_{t_i})}.
    \]
\end{lem}
\begin{proof}
    A clasped $\mathfrak{sl}_3$-web $M(t_{i};n)$ has five $\mathfrak{sl}_3$-clasps in $D_i$.
    For all $i=1,2,\ldots,k$, one can choose five small disks $D_{(i,1)},\ldots,D_{(i,5)}$ in $D_i$ so that each small disk surrounds a single $\mathfrak{sl}_3$-clasp.
    The intersection of $L$ and $D\setminus\sqcup\{D_{(i,j)}\mid 1\leq i \leq k, 1\leq j\leq 5\}$ become a graph $\mathsf{G}'$ with no trivalent vertices.
    Then, $\mathsf{G}_{n}(\otimes_{i=1}^{k}M(t_{i};n))=\mathsf{G}'_{n}(\otimes_{i=1}^{k}\otimes_{j=1}^{5}\mathrm{JW}_{D_{(i,j)}})$ by the construction of $\mathsf{G}'$ where $\mathrm{JW}_{D_{(i,j)}}$ is a one- or two-row colored clasp.
    Apply \cref{claspgraphdeg} to $\mathsf{G}'_{n}(\otimes_{i=1}^{k}\otimes_{j=1}^{5}\mathrm{JW}_{D_{(i,j)}})$ and we obtain
    \begin{align*}
        \mdeg{\mathsf{G}_{n}(\otimes_{i=1}^{k}M(t_{i};n))}
        &=\mdeg{\mathsf{G}'_{n}(\otimes_{i=1}^{k}\otimes_{j=1}^{5}\mathrm{JW}_{D_{(i,j)}})}\\
        &\geq\mdeg{\mathsf{G}'_{n}(\otimes_{i=1}^{k}\otimes_{j=1}^{5}\ID_{D_{(i,j)}})}\\
        &=\mdeg{\mathsf{G}_{n}(\otimes_{i=1}^{k}\uparrow\!\ID^{(i)}\!\downarrow_{t_i})}.
    \end{align*}
\end{proof}
\begin{lem}\label{lem:step2}
    For an adequate graph $\mathsf{G}$ of the anti-parallel $B$-adequate link diagram $L$ in \cref{linktograph} and any fixed tupple $(t_1,\ldots,t_k)\in\{0,\ldots,n\}^k$ with $0<t_j\leq n$,
    \[
        \mdeg{\Gamma^{(n)}(t_{j};l_{j})\mathsf{G}_{n}(\otimes_{i=1}^{k}\uparrow\!\ID^{(i)}\!\downarrow_{t_i})}>\mdeg{\Gamma^{(n)}(t_{j}-1;l_{j})\mathsf{G}_{n}(\saddle_{j}(\otimes_{i=1}^{k}\uparrow\!\ID^{(i)}\!\downarrow_{t_i}))}
    \]
\end{lem}
\begin{proof}
    We firstly note that $\mdeg{\mathsf{G}_{n}(\otimes_{i=1}^{k}{\uparrow\!\ID^{(i)}\!\downarrow_{t_i}})}=-c(\mathsf{G}_{n}(\otimes_{i=1}^{k}{\uparrow\!\ID^{(i)}\!\downarrow_{t_i}}))$ and $\mdeg{\mathsf{G}_{n}(\saddle_{j}(\otimes_{i=1}^{k}{\uparrow\!\ID^{(i)}\!\downarrow_{t_i}}))}=-c(\mathsf{G}_{n}(\saddle_{j}(\otimes_{i=1}^{k}{\uparrow\!\ID^{(i)}\!\downarrow_{t_i}})))$ by \cref{graphdeg}.
    The orientable saddle cobordism $\saddle_j$ changes the number of connected components by $\pm 1$ or $0$.
    Hence $\mdeg{\mathsf{G}_{n}(\otimes_{i=1}^{k}{\uparrow\!\ID^{(i)}\!\downarrow_{t_i}})}\geq\mdeg{\mathsf{G}_{n}(\saddle_{j}(\otimes_{i=1}^{k}{\uparrow\!\ID^{(i)}\!\downarrow_{t_i}}))}-1$.
    This inequality and \cref{lem:Gamma-deg} conclude
    \begin{align*}
        \mdeg{\Gamma^{(n)}(t_{j};l_{j})\mathsf{G}_{n}(\otimes_{i=1}^{k}{\uparrow\!\ID^{(i)}\!\downarrow_{t_i}})}
        &=\mdeg{\Gamma^{(n)}(t_{j};l_{j})}+\mdeg{\mathsf{G}_{n}(\otimes_{i=1}^{k}{\uparrow\!\ID^{(i)}\!\downarrow_{t_i}})}\\
        &\geq\mdeg{\Gamma^{(n)}(t_{j};l_{j})}+\mdeg{\mathsf{G}_{n}(\saddle_{j}(\otimes_{i=1}^{k}{\uparrow\!\ID^{(i)}\!\downarrow_{t_i}}))}-1\\
        &=\mdeg{\Gamma^{(n)}(t_{j}-1;l_{j})}+\mdeg{\mathsf{G}_{n}(\saddle_{j}(\otimes_{i=1}^{k}{\uparrow\!\ID^{(i)}\!\downarrow_{t_i}}))}+l_i(n-t_i+\frac{3}{2})\\
        &=\mdeg{\Gamma^{(n)}(t_{j}-1;l_{j})\mathsf{G}_{n}(\saddle_{j}(\otimes_{i=1}^{k}{\uparrow\!\ID^{(i)}\!\downarrow_{t_i}}))}+l_i(n-t_i+\frac{3}{2}).
    \end{align*}
    One can easy to see that $l_i(n-t_i+\frac{3}{2})\geq 3$ because $l_i\in 2\mathbb{Z}_{>0}$ and $0<t_j\leq n$.
\end{proof}
\begin{lem}\label{lem:step3}
    For an adequate graph $\mathsf{G}$ of the anti-parallel $B$-adequate link diagram $L$ in \cref{linktograph} and any $0<j\leq k$,
    \[
        \mdeg{\Big(\prod_{i=1}^{k}\Gamma^{(n)}(\delta_{ij};l_{i})\Big)\mathsf{G}_{n}(\otimes_{i=1}^{k}\uparrow\!\ID^{(i)}\!\downarrow_{\delta_{ij}})}-\mdeg{(\prod_{i=1}^{k}\Gamma^{(n)}(0;l_{i}))\mathsf{G}_{n}(\otimes_{i=1}^{k}\ID_{D_{i}})}\geq 2n+3,
    \]
    where $\delta_{ij}$ is the Kronecker delta function.
\end{lem}
\begin{proof}
    The adequacy of $\mathsf{G}_n$ says that $c(\mathsf{G}_{n}(\otimes_{i=1}^{k}{\uparrow\!\ID^{(i)}\!\downarrow}_{\delta_{ij}}))-c(\mathsf{G}_{n}(\otimes_{i=1}^{k}\ID_{D_{i}}))=-1$.
    We know $\mdeg{\Gamma^{(n)}(1;l_{j})}-\mdeg{\Gamma^{(n)}(0;l_{j})}=l_j(n+\frac{1}{2})+1$ by \cref{lem:Gamma-deg},
    $\mdeg{\mathsf{G}_{n}(\otimes_{i=1}^{k}\uparrow\!\ID^{(i)}\!\downarrow_{\delta_{ij}})}=-c(\mathsf{G}_{n}(\otimes_{i=1}^{k}\uparrow\!\ID^{(i)}\!\downarrow_{\delta_{ij}}))$ and $\mdeg{\mathsf{G}_{n}(\otimes_{i=1}^{k}\ID_{D_{i}})}=-c(\mathsf{G}_{n}(\otimes_{i=1}^{k}\ID_{D_{i}}))$ by \cref{graphdeg}.
    Hence,
    \begin{align*}
        &\mdeg{\Big(\prod_{i=1}^{k}\Gamma^{(n)}(\delta_{ij};l_{i})\Big)\mathsf{G}_{n}(\otimes_{i=1}^{k}\uparrow\!\ID^{(i)}\!\downarrow_{\delta_{ij}})}-\mdeg{(\prod_{i=1}^{k}\Gamma^{(n)}(0;l_{i}))\mathsf{G}_{n}(\otimes_{i=1}^{k}\ID_{D_{i}})}\\
        &=\mdeg{\Gamma^{(n)}(1;l_{j})}-\mdeg{\Gamma^{(n)}(0;l_{j})}+\mdeg{\mathsf{G}_{n}(\otimes_{i=1}^{k}\uparrow\!\ID^{(i)}\!\downarrow_{\delta_{ij}})}-\mdeg{\mathsf{G}_{n}(\otimes_{i=1}^{k}\ID_{D_{i}})}\\
        &=l_j(n+\frac{1}{2})+1-c(\mathsf{G}_{n}(\otimes_{i=1}^{k}\uparrow\!\ID^{(i)}\!\downarrow_{\delta_{ij}}))+c(\mathsf{G}_{n}(\otimes_{i=1}^{k}\ID_{D_{i}}))\\
        &=l_j(n+\frac{1}{2})+2\geq 2n+3.
    \end{align*}
    The last inequality holds because $l_j$ is a positive even integer.
\end{proof}
\begin{lem}\label{lem:step4}
    For an adequate graph $\mathsf{G}$ of the anti-parallel $B$-adequate link diagram $L$ in \cref{linktograph},
    \[
        \mdeg{\Big(\prod_{i=1}^k\Gamma^{(n)}(0;l_{i})\Big)\mathsf{G}_{n}(\otimes_{i=1}^{k}\ID_{D_i})}=\mdeg{(\prod_{i=1}^k\Gamma^{(n)}(0;l_{i}))\mathsf{G}_{n}(\otimes_{i=1}^{k}\mathrm{JW}_{D_i})}
    \]
\end{lem}
\begin{proof}
    This assertion comes from the adequacy of $\mathsf{G}$ and \cref{claspgraphdeg}.
\end{proof}
\begin{proof}[Proof of \cref{linktograph}]
    By \cref{lem:deg} and \cref{lem:step1}, we obtain
    \begin{align*}
        \mdeg{\Big(\prod_{i=1}^{k}\Gamma^{(n)}(t_{i};l_{i})\Big)\mathsf{G}_{n}(\otimes_{i=1}^{k}M(t_{i};n))}
        &=\left(\sum_{i=1}^{k}\mdeg{\Gamma^{(n)}(t_{i};l_{i})}\right)+\mdeg{\mathsf{G}_{n}(\otimes_{i=1}^{k}M(t_{i};n))}\\
        &\geq\left(\sum_{i=1}^{k}\mdeg{\Gamma^{(n)}(t_{i};l_{i})}\right)+\mdeg{\mathsf{G}_{n}(\otimes_{i=1}^{k}\uparrow\!\ID^{(i)}\!\downarrow_{t_i})}\\
        &=\mdeg{\Big(\prod_{i=1}^{k}\Gamma^{(n)}(t_{i};l_{i})\Big)\mathsf{G}_{n}(\otimes_{i=1}^{k}\uparrow\!\ID^{(i)}\!\downarrow_{t_i})}.
    \end{align*}
    Choose a sequence of orientable saddle cobordisms that changes $\mathsf{G}_{n}(\otimes_{i=1}^{k}{\uparrow\!\ID^{(i)}\!\downarrow_{t_i}})$ to $\mathsf{G}_{n}(\otimes_{i=1}^{k}{\uparrow\!\ID^{(i)}\!\downarrow_{0}})=\mathsf{G}_{n}(\otimes_{i=1}^{k}\ID_{D_i})$, and apply \cref{lem:step2} to $\mdeg{(\prod_{i=1}^{k}\Gamma^{(n)}(t_{i};l_{i}))\mathsf{G}_{n}(\otimes_{i=1}^{k}{\uparrow\!\ID^{(i)}\!\downarrow_{t_i}})}$ along the sequence until just before the last step.
    We can apply \cref{lem:step3} to the last orientable saddle cobordism given by $\saddle_j$.
    This operation gives
    \begin{align*}
        \mdeg{\Big(\prod_{i=1}^{k}\Gamma^{(n)}(t_{i};l_{i})\Big)\mathsf{G}_{n}(\otimes_{i=1}^{k}\uparrow\!\ID^{(i)}\!\downarrow_{t_i})}
        &>\mdeg{\Big(\prod_{i=1}^{k}\Gamma^{(n)}(\delta_{ij};l_{i})\Big)\mathsf{G}_{n}(\otimes_{i=1}^{k}\uparrow\!\ID^{(i)}\!\downarrow_{\delta_{ij}})}\\
        &\geq\mdeg{\Big(\prod_{i=1}^k\Gamma^{(n)}(0;l_{i})\Big)\mathsf{G}_{n}(\otimes_{i=1}^{k}\ID_{D_i})}+2n+3.
    \end{align*}
    The above two inequalities and \cref{lem:step4} conclude the following:
    \begin{align}
        \mdeg{\Big(\prod_{i=1}^{k}\Gamma^{(n)}(t_{i};l_{i})\Big)\mathsf{G}_{n}(\otimes_{i=1}^{k}M(t_{i};n))}
        &>\mdeg{\Big(\prod_{i=1}^k\Gamma^{(n)}(0;l_{i})\Big)\mathsf{G}_{n}(\otimes_{i=1}^{k}\ID_{D_i})}+2n+3\label{eq:step_ineq}\\
        &=\mdeg{\Big(\prod_{i=1}^k\Gamma^{(n)}(0;l_{i})\Big)\mathsf{G}_{n}(\otimes_{i=1}^{k}\mathrm{JW}_{D_i})}+2n+3\notag
    \end{align}
    \sloppy for any $(t_1,t_2,\ldots,t_k)\neq (0,0,\ldots,0)$.
    Finally, we will compare $\mdeg{\mathsf{G}_{n}(\otimes_{i=1}^{k}({R_{l_i}}^{(n)}))}$ to $\mdeg{\prod_{i=1}^{k}\gamma^{(n)}(0)^{l_{i}}\mathsf{G}_{n}(\otimes_{i=1}^{k}M(0;n))}$ by using the expansion in \cref{linkexpansion} and \cref{eq:step_ineq}.
    By \cref{linkexpansion},
    \begin{align*}
        &\mathsf{G}_{n}(\otimes_{i=1}^{k}({R_{l_i}}^{(n)}))\\
        &\quad=\Big(\prod_{i=1}^{k}\Gamma^{(n)}(0;l_{i})\Big)\mathsf{G}_{n}(\otimes_{i=1}^{k}M(0;n))
        +\hspace{-2em}\sum_{(t_1,t_2\ldots,t_k)\neq (0,0,\ldots,0)}\Big(\prod_{i=1}^{k}\Gamma^{(n)}(t_{i};l_{i})\Big)\mathsf{G}_{n}(\otimes_{i=1}^{k}M(t_{i};n))\\
        &\quad=\Big(\prod_{i=1}^{k}\Gamma^{(n)}(0;l_{i})\Big)\mathsf{G}_{n}(\otimes_{i=1}^{k}\mathrm{JW}_{D_i})
        +\hspace{-2em}\sum_{(t_1,t_2\ldots,t_k)\neq (0,0,\ldots,0)}\Big(\prod_{i=1}^{k}\Gamma^{(n)}(t_{i};l_{i})\Big)\mathsf{G}_{n}(\otimes_{i=1}^{k}M(t_{i};n)),
    \end{align*}
    By \cref{lem:deg} and \cref{eq:step_ineq}, one can obtain
    \begin{align*}
        &\mdeg{\mathsf{G}_{n}(\otimes_{i=1}^{k}({R_{l_i}}^{(n)}))-\Big(\prod_{i=1}^{k}\Gamma^{(n)}(0;l_{i})\Big)\mathsf{G}_{n}(\otimes_{i=1}^{k}\mathrm{JW}_{D_i})}\\
        &\quad=\min_{(t_1,t_2\ldots,t_k)\neq (0,0,\ldots,0)}\left\{\mdeg{\Big(\prod_{i=1}^{k}\Gamma^{(n)}(t_{i};l_{i})\Big)\mathsf{G}_{n}(\otimes_{i=1}^{k}M(t_{i};n))}\right\}\\
        &\quad>\mdeg{\Big(\prod_{i=1}^k\Gamma^{(n)}(0;l_{i})\Big)\mathsf{G}_{n}(\otimes_{i=1}^{k}\mathrm{JW}_{D_i})}+2n+3
    \end{align*}
    We remark that 
    \[
        \mdeg{\mathsf{G}_{n}(\otimes_{i=1}^{k}({R_{l_i}}^{(n)}))}=\mdeg{\Big(\prod_{i=1}^k\Gamma^{(n)}(0;l_{i})\Big)\mathsf{G}_{n}(\otimes_{i=1}^{k}\mathrm{JW}_{D_i})}
    \]
    by \cref{eq:step_ineq} and $\Gamma^{(n)}(0;l_{i})=\prod_{i=1}^{k}\gamma^{(n)}(0)^{l_i}$ by definition.
\end{proof}

\begin{dfn}\label{nequiv}
    For $f(q)$ and $g(q)$ in $\mathbb{Z}((q))$, we define $f(q)\equiv_{n}g(q)$ if $\mdeg{\hat{f}(q)-\hat{g}(q)}\geq n+1$.
\end{dfn}

\begin{prop}\label{graphstability}
    Let $\mathrm{JW}_{D_i}$ represent $M(0;n)$ in $D_i$ defined in \cref{linkexpansion}.
    If $\mathsf{G}$ is adequate, then
    \[
        \mathsf{G}_{n+1}(\otimes_{i=1}^{k}\mathrm{JW}_{D_i})\equiv_{n+1}\mathsf{G}_{n}(\otimes_{i=1}^{k}\mathrm{JW}_{D_i}).
    \]
\end{prop}
\begin{proof}
    We prove it below \cref{unclasp}.
\end{proof}
We will prove this proposition similar strategy to \cite{Armond13}.
Let us explain it in our situation.
Choose a one-row colored $\mathfrak{sl}_3$-clasp with $n+1$ strands in $\mathsf{G}_{n+1}(\otimes_{i=1}^{k}\ID_{D_i})$. 
It corresponds to $n+1$ parallel circles in $\mathsf{G}(\otimes_{i=1}^{k}\ID_{D_i})$.
First, we move the $n+1$ parallel strands to the left side of the two-row colored $\mathfrak{sl}_3$-clasps at the center of $D_i$.
The ``left side'' is determined by the orientation of $n+1$ parallel strands at each $\mathfrak{sl}_3$-clasp, see the following picture:
\begin{align*}
	\mathord{\ 
		\tikz[baseline=-.6ex, scale=.1]{
			\draw[very thick, red] (6,11) -- (6,14);
			\draw (-6,11) -- (-6,14);
			\draw[very thick, red] (6,-11) -- (6,-14);
			\draw (-6,-11) -- (-6,-14);
			\draw[-<-=.5] (-6,-10) -- (-6,0);
			\draw[->-=.5] (-6,10) -- (-6,0);
			\draw[->-=.5, very thick, red] (6,-10) -- (6,0);
			\draw[-<-=.5, very thick, red] (6,10) -- (6,0);
			\draw[fill=white] (-10,10) rectangle (-2,11);
			\draw[fill=white] (10,10) rectangle (2,11);
			\draw[fill=white] (-10,-.5) rectangle (10,.5);
			\draw[fill=white] (-10,-10) rectangle (-2,-11);
			\draw[fill=white] (10,-10) rectangle (2,-11);
			\node at (6,13) [right]{${\scriptstyle n+1}$};
			\node at (-6,13) [right]{${\scriptstyle n+1}$};
			\node at (6,-13) [right]{${\scriptstyle n+1}$};
			\node at (-6,-13) [right]{${\scriptstyle n+1}$};
		}
	\ }
	=\mathord{\ 
		\tikz[baseline=-.6ex, scale=.1]{
			\draw[very thick, red] (6,11) -- (6,14);
			\draw (-6,11) -- (-6,14);
			\draw[very thick, red] (6,-11) -- (6,-14);
			\draw (-6,-11) -- (-6,-14);
			\draw[-<-=.8] (-6,-10) to[out=north, in=south] (6,0);
			\draw[->-=.8] (-6,10) to[out=south, in=north] (6,0);
			\draw[white, double=red, double distance=1.2pt, line width=2.4pt, ->-=.8] (6,-10) to[out=north, in=south] (-6,0);
			\draw[white, double=red, double distance=1.2pt, line width=2.4pt, -<-=.8] (6,10) to[out=south, in=north] (-6,0);
			\draw[fill=white] (-10,10) rectangle (-2,11);
			\draw[fill=white] (10,10) rectangle (2,11);
			\draw[fill=white] (-10,-.5) rectangle (10,.5);
			\draw[fill=white] (-10,-10) rectangle (-2,-11);
			\draw[fill=white] (10,-10) rectangle (2,-11);
			\node at (6,13) [right]{${\scriptstyle n+1}$};
			\node at (-6,13) [right]{${\scriptstyle n+1}$};
			\node at (6,-13) [right]{${\scriptstyle n+1}$};
			\node at (-6,-13) [right]{${\scriptstyle n+1}$};
		}
	\ }.
\end{align*}
In the above, the chosen $n+1$ parallel strands are expressed as a red arc labeled by $n+1$.
We remark that this deformation of $\mathfrak{sl}_3$-webs does not change the coefficients.
We assume that the chosen $n+1$ parallel strands pass through $m$ two-row colored $\mathfrak{sl}_3$-clasps $\mathrm{JW}_{D_1},\mathrm{JW}_{D_2},\ldots,\mathrm{JW}_{D_m}$ in this order by replacing labels of twist regions if necessary.
We denote the initial $\mathfrak{sl}_3$-web by $\mathsf{G}^{(n+1)}(0)$, and a clasped $\mathfrak{sl}_3$-web obtained by unclasping the leftmost strand of the $n+1$ strands from $\mathrm{JW}_{D_1},\ldots,\mathrm{JW}_{D_{j-1}}$, and $\mathrm{JW}_{D_j}$ in $\mathsf{G}^{(n+1)}(0)$ by $\mathsf{G}^{(n+1)}(j)$ for $j=1,2,\ldots,m-1$.
If one could unclasp the leftmost strand from $\mathrm{JW}_{D_1},\ldots,\mathrm{JW}_{D_{m-1}}$, then the $\mathfrak{sl}_3$-web becomes $\mathsf{G}^{(n+1)}(m-1)$.
One can shrink the unclasped strand in $\mathsf{G}^{(n+1)}(m-1)$ to $\mathrm{JW}_{D_{m}}$ as follows.
\begin{align*}
    \mathsf{G}^{(n+1)}(0)&\to\mathsf{G}^{(n+1)}(1)=
	\mathord{\ 
		\tikz[baseline=-.6ex, scale=.1]{
			\begin{scope}[xshift=15cm]
				\draw[->-=.5, very thick, red] (-2,-15) -- (-2,0);
				\draw[-<-=.5, very thick, red] (-2,15) -- (-2,0);
				\draw[->-=.5, thick, rounded corners, red] (-4,-15) -- (-4,-13) -- (-9,-13) -- (-9,-8) -- (-4,-8) -- (-4,13) -- (-4,15);
				\draw[-<-=.2, rounded corners] (9,8) -- (4,8) -- (4,13) -- (9,13);
				\draw[-<-=.2, rounded corners] (9,-2) -- (4,-2) -- (4,2) -- (9,2);
				\draw[->-=.8, rounded corners] (9,-8) -- (4,-8) -- (4,-13) -- (9,-13);
				\draw[fill=white] (-7,10) rectangle (7,11);
				\draw[fill=white] (-7,-.5) rectangle (7,.5);
				\draw[fill=white] (-7,-10) rectangle (7,-11);
				\node at (-2,13) [right]{${\scriptstyle n}$};
				\node at (-2,-13) [right]{${\scriptstyle n}$};	
				\node at (7,13) [above]{${\scriptstyle n+1}$};
				\node at (7,2) [above]{${\scriptstyle n+1}$};
				\node at (7,-13) [below]{${\scriptstyle n+1}$};
			\end{scope}
			\draw[dashed, rounded corners, very thick, red] (-13,15) -- (-13,20) -- (13,20) -- (13,15);
			\draw[dashed, rounded corners, thick, red] (-11,15) -- (-11,17) -- (11,17) -- (11,15);
			\draw[dashed, rounded corners, very thick, red] (-13,-15) -- (-13,-20) -- (13,-20) -- (13,-15);
			\draw[dashed, rounded corners, thick, red] (-11,-15) -- (-11,-17) -- (11,-17) -- (11,-15);
			\begin{scope}[xshift=-15cm]
				\draw[->-=.2, rounded corners] (-9,-2) -- (-4,-2) -- (-4,2) -- (-9,2);
				\draw[-<-=.5, very thick, red] (2,-15) -- (2,0);
				\draw[->-=.5, very thick, red] (2,15) -- (2,0);
				\draw[-<-=.5, thick, red] (4,-15) -- (4,0);
				\draw[->-=.5, thick, red] (4,15) -- (4,0);
				\draw[fill=white] (-7,-.5) rectangle (7,.5);
				\node at (-7,2) [above]{${\scriptstyle n+1}$};
				\node at (2,13) [left]{${\scriptstyle n}$};
				\node at (2,-13) [left]{${\scriptstyle n}$};	
			\end{scope}
		}
	\ }\longrightarrow
	\mathord{\ 
		\tikz[baseline=-.6ex, scale=.1]{
			\begin{scope}[xshift=15cm]
				\draw[->-=.5, very thick, red] (-2,-15) -- (-2,0);
				\draw[-<-=.5, very thick, red] (-2,15) -- (-2,0);
				\draw[->-=.5, thick, rounded corners, red] (-4,-15) -- (-4,-13) -- (-9,-13) -- (-9,-8) -- (-4,-8) -- (-4,-2) -- (-9,-2) -- (-9,2) -- (-4,2) -- (-4,8) -- (-4,13) -- (-4,15);
				\draw[-<-=.2, rounded corners] (9,8) -- (4,8) -- (4,13) -- (9,13);
				\draw[-<-=.2, rounded corners] (9,-2) -- (4,-2) -- (4,2) -- (9,2);
				\draw[->-=.8, rounded corners] (9,-8) -- (4,-8) -- (4,-13) -- (9,-13);
				\draw[fill=white] (-7,10) rectangle (7,11);
				\draw[fill=white] (-7,-.5) rectangle (7,.5);
				\draw[fill=white] (-7,-10) rectangle (7,-11);
				\node at (-2,13) [right]{${\scriptstyle n}$};
				\node at (-2,-13) [right]{${\scriptstyle n}$};	
				\node at (7,13) [above]{${\scriptstyle n+1}$};
				\node at (7,2) [above]{${\scriptstyle n+1}$};
				\node at (7,-13) [below]{${\scriptstyle n+1}$};
			\end{scope}
			\draw[dashed, rounded corners, very thick, red] (-13,15) -- (-13,20) -- (13,20) -- (13,15);
			\draw[dashed, rounded corners, thick, red] (-11,15) -- (-11,17) -- (11,17) -- (11,15);
			\draw[dashed, rounded corners, very thick, red] (-13,-15) -- (-13,-20) -- (13,-20) -- (13,-15);
			\draw[dashed, rounded corners, thick, red] (-11,-15) -- (-11,-17) -- (11,-17) -- (11,-15);
			\begin{scope}[xshift=-15cm]
				\draw[->-=.2, rounded corners] (-9,-2) -- (-4,-2) -- (-4,2) -- (-9,2);
				\draw[-<-=.5, very thick, red] (2,-15) -- (2,0);
				\draw[->-=.5, very thick, red] (2,15) -- (2,0);
				\draw[-<-=.5, thick, red] (4,-15) -- (4,0);
				\draw[->-=.5, thick, red] (4,15) -- (4,0);
				\draw[fill=white] (-7,-.5) rectangle (7,.5);
				\node at (-7,2) [above]{${\scriptstyle n+1}$};
				\node at (2,13) [left]{${\scriptstyle n}$};
				\node at (2,-13) [left]{${\scriptstyle n}$};	
			\end{scope}
		}
	\ }\\
	&\longrightarrow\cdots\longrightarrow
	\mathord{\ 
		\tikz[baseline=-.6ex, scale=.1]{
			\begin{scope}[xshift=15cm]
				\draw[->-=.5, very thick, red] (-2,-15) -- (-2,0);
				\draw[-<-=.5, very thick, red] (-2,15) -- (-2,0);
				\draw[-<-=.2, rounded corners] (9,8) -- (4,8) -- (4,13) -- (9,13);
				\draw[-<-=.2, rounded corners] (9,-2) -- (4,-2) -- (4,2) -- (9,2);
				\draw[->-=.8, rounded corners] (9,-8) -- (4,-8) -- (4,-13) -- (9,-13);
				\draw[fill=white] (-7,10) rectangle (7,11);
				\draw[fill=white] (-7,-.5) rectangle (7,.5);
				\draw[fill=white] (-7,-10) rectangle (7,-11);
				\node at (-2,13) [right]{${\scriptstyle n}$};
				\node at (-2,-13) [right]{${\scriptstyle n}$};	
				\node at (7,13) [above]{${\scriptstyle n+1}$};
				\node at (7,2) [above]{${\scriptstyle n+1}$};
				\node at (7,-13) [below]{${\scriptstyle n+1}$};
			\end{scope}
			\draw[dashed, rounded corners, very thick, red] (-13,15) -- (-13,20) -- (13,20) -- (13,15);
			\draw[dashed, rounded corners, very thick, red] (-13,-15) -- (-13,-20) -- (13,	-20) -- (13,-15);
			\begin{scope}[xshift=-15cm]
				\draw[->-=.2, rounded corners] (-9,-2) -- (-4,-2) -- (-4,2) -- (-9,2);
				\draw[-<-=.5, very thick, red] (2,-15) -- (2,0);
				\draw[->-=.5, very thick, red] (2,15) -- (2,0);
				\draw[-<-=.5, thick, rounded corners, red] (4,-3) -- (4,3) -- (9,3) -- (9,-3) -- cycle;
				\draw[fill=white] (-7,-.5) rectangle (7,.5);
				\node at (-7,2) [above]{${\scriptstyle n+1}$};
				\node at (2,13) [left]{${\scriptstyle n}$};
				\node at (2,-13) [left]{${\scriptstyle n}$};	
			\end{scope}
		}
	\ }=\mathsf{G}^{(n+1)}(m-1)
\end{align*}

We will see that the above sequence of $\mathfrak{sl}_3$-webs can be realized by computing $\mathsf{G}^{(n+1)}(j)$ modulo $q^{n+1}\bZ[[q]]$.

\begin{lem}\label{unclasp}
	\begin{align*}
		\mathord{\ 
			\tikz[baseline=-.6ex, scale=.1]{
				\draw[->-=.5] (-1,-5) -- (-1,0);
				\draw[-<-=.5] (-1,11) -- (-1,0);
				\draw[-<-=.8, rounded corners] (4,0) -- (4,3)-- (9,3);
				\draw[-<-=.2, rounded corners] (9,-3) -- (4,-3) -- (4,0);
				\draw[fill=white] (-4,-.5) rectangle (7,.5);
				\node at (-1,10) [left]{${\scriptstyle n+1}$};
				\node at (-1,-4) [left]{${\scriptstyle n+1}$};	
				\node at (9,3) [right]{${\scriptstyle k_j}$};
				\node at (9,-3) [right]{${\scriptstyle k_j}$};
			}
		\ }
		=\mathord{\ 
			\tikz[baseline=-.6ex, scale=.1, yshift=-2cm]{
				\draw[->-=.5] (-1,-5) -- (-1,0);
				\draw[->-=.5] (-2,11) -- (-2,15);
				\draw[-<-=.5] (-1,11) -- (-1,0);
				\draw[->-=.5] (-5,-5) -- (-5,11);
				\draw[-<-=.8, rounded corners] (4,0) -- (4,3)-- (9,3);
				\draw[-<-=.2, rounded corners] (9,-3) -- (4,-3) -- (4,0);
				\draw[->-=.5] (-2,-10) -- (-2,-5);
				\draw[fill=white] (-7,10) rectangle (5,11);
				\draw[fill=white] (-7,-5) rectangle (5,-6);
				\draw[fill=white] (-4,-.5) rectangle (7,.5);
				\node at (-2,13) [left]{${\scriptstyle n+1}$};
				\node at (-2,-8) [left]{${\scriptstyle n+1}$};
				\node at (-1,3) [right]{${\scriptstyle n}$};	
				\node at (-1,-3) [right]{${\scriptstyle n}$};	
				\node at (-5,6) [left]{${\scriptstyle 1}$};	
				\node at (9,3) [right]{${\scriptstyle k_j}$};
				\node at (9,-3) [right]{${\scriptstyle k}_j$};
			}
		\ }
		+(-1)^{n+1}\frac{[k_j]}{[n+k_j+2]}\mathord{\ 
			\tikz[baseline=-.6ex, scale=.1, yshift=-2cm]{
				\draw[->-=.5] (-1,-5) -- (-1,0);
				\draw[->-=.5] (2,11) -- (2,15);
				\draw[-<-=.3, -<-=.8] (0,11) -- (0,0);
				\draw[->-=.2, ->-=.9, rounded corners] (-7,-5) -- (-7,6) -- (4,6) -- (4,0);
				\draw[->-=.5] (9,6) to[out=west, in=south] (5,10);
				\draw[-<-=.8, rounded corners] (6,0) -- (6,3)-- (9,3);
				\draw[-<-=.5] (9,5) -- (15,5);
				\draw[-<-=.2, rounded corners] (14,-3) -- (5,-3) -- (5,0);
				\draw[->-=.5] (-2,-10) -- (-2,-5);
				\draw[fill=white] (9,2) rectangle (10,7);
				\draw[fill=white] (-2,5) rectangle (2,7);
				\draw (-2,7) -- (2,5);
				\draw[fill=white] (-7,10) rectangle (7,11);
				\draw[fill=white] (-8,-5) rectangle (5,-6);
				\draw[fill=white] (-4,-.5) rectangle (7,.5);
				\node at (2,13) [left]{${\scriptstyle n+1}$};
				\node at (-2,-8) [left]{${\scriptstyle n+1}$};
				\node at (0,3) [left]{${\scriptstyle n}$};	
				\node at (-1,-3) [right]{${\scriptstyle n}$};	
				\node at (-6,7) [left]{${\scriptstyle 1}$};	
				\node at (6,9) [right]{${\scriptstyle 1}$};
				\node at (14,5) [below]{${\scriptstyle k_j}$};
				\node at (9,-3) [below]{${\scriptstyle k_j}$};
			}
		\ }
	\end{align*}
\end{lem}
\begin{proof}
	We will show it in \cref{prop:unclasp-appendix}.
\end{proof}
	
\begin{proof}[Proof of \cref{graphstability}]
Let us do the unclasping operation that we explained. 
Choose $n+1$ parallel circles passing the left side of $\mathfrak{sl}_3$-clasps $\mathrm{JW}_{D_1},\ldots,\mathrm{JW}_{D_m}$.
We apply \cref{unclasp} to the $j$-th $\mathfrak{sl}_3$-clasp $\mathrm{JW}_{D_j}$ in $\mathsf{G}^{(n+1)}(j-1)$ for $j=1,2,\ldots,m-2$.
Then, we obtain
\[
    \mathsf{G}^{(n+1)}(j-1)=\mathsf{G}^{(n+1)}(j)+(-1)^{n+1}\frac{[k_j]}{[n+k_j+2]}\mathsf{H}^{(n+1)}(j),
\]
where $\mathsf{H}^{(n+1)}(j)$ is a clasped $\mathfrak{sl}_3$-web corresponding to the second term in \cref{unclasp}.
We use $k_j$ above although $k_j=n+1$ in this situation because it is useful for later discussion. 
We compare $\mdeg{\mathsf{G}^{(n+1)}(j)}$ to $\mdeg{\mathsf{H}^{(n+1)}(j)}$.
Let $\widetilde{\mathsf{G}}^{(n+1)}(j)$ and $\widetilde{\mathsf{H}}^{(n+1)}(j)$ denote a flat trivalent graph obtained by replacing all $\mathfrak{sl}_3$-clasps in $\mathsf{G}^{(n+1)}(j)$ and $\mathsf{H}^{(n+1)}(j)$ with identity webs, respectively. 
\cref{claspgraphdeg} says that the lower bound of the minimum degree $\mdeg{\mathsf{H}^{(n+1)}(j)}$ is given by the number of vertices and connected components of $\widetilde{\mathsf{H}}$.
By tracing strands of $\widetilde{\mathsf{H}}^{(n+1)}(j)$ as in \cref{connectedcomp}, one can see that $c(\widetilde{\mathsf{G}}^{(n+1)}(j-1))-c(\widetilde{\mathsf{H}}^{(n+1)}(j))=n+1$ and $v(\widetilde{\mathsf{H}}^{(n+1)}(j))=2n$.
\begin{figure}
	\centering
	\begin{tikzpicture}[scale=.1]
		\draw[->-=.5] (-1,6) -- (-1,12);
		\draw[->-=.5] (1,6) -- (1,12);
		\draw[->-=.5] (-1,-3) -- (-1,6);
		\draw[->-=.2, ->-=.9, rounded corners] (-7,-3) -- (-7,6) -- (4,6) -- (4,-3);
		\draw[rounded corners, dashed] (4,-3) -- (4,-5) -- (20,-5) -- (20,6) -- (14,6);
		\draw[->-=.8, rounded corners] (14,6) -- (10,6) -- (10,12);
		\draw[-<-=.8, rounded corners] (14,-1) -- (6,-1) -- (6,3)-- (14,3);
		\draw[rounded corners, dashed] (14,-1) -- (18,-1) -- (18,3)-- (14,3);
		\draw[->-=.5, rounded corners] (1,-3) -- (1,6);
		\draw[rounded corners, dashed] (1,-3) -- (1,-9) -- (-14,-9) -- (-14,18) -- (10,18) -- (10,12);
		\draw[rounded corners, dashed] (-7,-3) -- (-7,-5) -- (-10,-5) -- (-10,14) -- (-1,14) -- (-1,12);
		\draw[rounded corners, dashed] (-1,-3) -- (-1,-7) -- (-12,-7) -- (-12,16) -- (1,16) -- (1,12);
		\draw[fill=white] (-2,5) rectangle (2,7);
		\draw (-2,7) -- (2,5);
		\node at (1,11) [right]{${\scriptstyle n-1}$};
		\node at (-1,11) [left]{${\scriptstyle 1}$};
		\node at (0,-2) [left]{${\scriptstyle n-1}$};	
		\node at (-7,6) [above]{${\scriptstyle 1}$};	
		\node at (6,1) [right]{${\scriptstyle k_j-1}$};
		\node at (1,-7) [right]{${\scriptstyle 1}$};
		\node at (3,6) [right]{${\scriptstyle 1}$};
		\node at (10,10) [right]{${\scriptstyle 1}$};
	\end{tikzpicture}
	\caption{A flat $\mathfrak{sl}_3$-web $\widetilde{\mathsf{H}}^{(n+1)}(j)$ obtained from $\mathsf{H}^{(n+1)}(j)$.}
	\label{connectedcomp}
\end{figure}
By \cref{claspgraphdeg},
\begin{align*}
    \mdeg{\mathsf{H}^{(n+1)}(j)}
    &\geq-\frac{1}{4}v(\widetilde{\mathsf{H}}^{(n+1)}(j))-c(\widetilde{\mathsf{H}}^{(n+1)}(j))\\
    &=-\frac{1}{4}(2n)-\left(c(\widetilde{\mathsf{G}}^{(n+1)}(j))-(n+1)\right)\\
    &=\frac{n+2}{2}-c(\widetilde{\mathsf{G}}^{(n+1)}(j))
\end{align*}
\cref{graphdeg} and $v(\mathsf{G}^{(n+1)}(j))=0$ claim $\mdeg{\widetilde{\mathsf{G}}^{(n+1)}(j)}=-c(\widetilde{\mathsf{G}}^{(n+1)}(j))$.
Moreover $\mdeg{\mathsf{G}^{(n+1)}(j)}=\mdeg{\widetilde{\mathsf{G}}^{(n+1)}(j)}$ holds by adequacy of $\mathsf{G}^{n+1}(j)$ and \cref{claspgraphdeg}.
Using \cref{lem:deg} and \cref{eg:deg}, the above facts lead to the following inequality. 
\begin{align*}
    \mdeg{(-1)^{n+1}\frac{[k_j]}{[n+k_j+2]}\widetilde{\mathsf{H}}^{(n+1)}(j)}
    &\geq\frac{n+2}{2}+\left(\frac{n+2}{2}+\mdeg{\mathsf{G}^{(n+1)}(j)}\right)\\
    &=(n+2)+\mdeg{\mathsf{G}^{(n+1)}(j)}.
\end{align*}
It also holds that $\mdeg{\mathsf{G}^{(n+1)}(j-1)}=\mdeg{\mathsf{G}^{(n+1)}(j)}$ due to \cref{graphdeg} and \cref{claspgraphdeg}.
In fact, the adequacy of these clasped $\mathfrak{sl}_3$-webs and $\widetilde{\mathsf{G}}^{(n+1)}(j-1)=\widetilde{\mathsf{G}}^{(n+1)}(j)$ imply
\begin{align*}
    \mdeg{\mathsf{G}^{(n+1)}(j-1)}
    &=\mdeg{\widetilde{\mathsf{G}}^{(n+1)}(j-1)}
    =-c(\widetilde{\mathsf{G}}^{(n+1)}(j-1))\\
    &=-c(\widetilde{\mathsf{G}}^{(n+1)}(j))
    =\mdeg{\widetilde{\mathsf{G}}^{(n+1)}(j)}
    =\mdeg{\mathsf{G}^{(n+1)}(j)}.
\end{align*}
Thus we obtain $\mathsf{G}^{(n+1)}(j)-\mathsf{G}^{(n+1)}(j-1)=(-1)^{n+1}\frac{[k_j]}{[n+k_j+2]}\widetilde{\mathsf{H}}^{(n+1)}(j)\in q^{(n+2)+\mdeg{\mathsf{G}^{(n+1)}}}\mathbb{Z}[[q^{\frac{1}{6}}]]$ where $\mdeg{\mathsf{G}^{(n+1)}}\coloneqq\mdeg{\mathsf{G}^{(n+1)}(j-1)}=\mdeg{\mathsf{G}^{(n+1)}(j)}$.

We remark that this result holds independently of the number $m-j$ of $\mathfrak{sl}_3$-clasps that the $n+1$ paralleled strands pass through, and besides, the number $k_j$ of the oppositely oriented strands adjacent to the $n+1$ paralleled strands.
We repeatedly apply \cref{unclasp} to $\mathrm{JW}_{D_1},\ldots,\mathrm{JW}_{D_{m-1}}$ and obtain
\[
	\mathsf{G}^{(n+1)}(0)\equiv_{n+1}\mathsf{G}^{(n+1)}(1)\equiv_{n+1}\cdots\equiv_{n+1}\mathsf{G}^{(n+1)}(m-1).
\]
Let $\mathsf{G}^{(n+1)}(m)$ be a $\mathfrak{sl}_3$-web removing the small circle from $\mathsf{G}^{(n+1)}(m-1)$, see below:
\[
	\mathsf{G}^{(n+1)}(m-1)=\!\!
	\mathord{\ 
		\tikz[baseline=-.6ex, scale=.1]{
			\begin{scope}[xshift=15cm]
				\draw[->-=.5, very thick, red] (-2,-15) -- (-2,0);
				\draw[-<-=.5, very thick, red] (-2,15) -- (-2,0);
				\draw[-<-=.2, rounded corners] (9,8) -- (4,8) -- (4,13) -- (9,13);
				\draw[-<-=.2, rounded corners] (9,-2) -- (4,-2) -- (4,2) -- (9,2);
				\draw[->-=.8, rounded corners] (9,-8) -- (4,-8) -- (4,-13) -- (9,-13);
				\draw[fill=white] (-7,10) rectangle (7,11);
				\draw[fill=white] (-7,-.5) rectangle (7,.5);
				\draw[fill=white] (-7,-10) rectangle (7,-11);
				\node at (-2,13) [right]{${\scriptstyle n}$};
				\node at (-2,-13) [right]{${\scriptstyle n}$};	
				\node at (7,13) [above]{${\scriptstyle k_1}$};
				\node at (7,2) [above]{${\scriptstyle k_2}$};
				\node at (7,-13) [below]{${\scriptstyle k_3}$};
			\end{scope}
            \draw[dashed, rounded corners, very thick, red] (-13,15) -- (-13,20) -- (13,20) -- (13,15);
			\draw[dashed, rounded corners, very thick, red] (-13,-15) -- (-13,-20) -- (13,-20) -- (13,-15);
			\begin{scope}[xshift=-15cm]
				\draw[->-=.2, rounded corners] (-9,-2) -- (-4,-2) -- (-4,2) -- (-9,2);
				\draw[-<-=.5, very thick, red] (2,-15) -- (2,0);
				\draw[->-=.5, very thick, red] (2,15) -- (2,0);
				\draw[-<-=.6, thick, rounded corners, red] (4,-3) -- (4,3) -- (9,3) -- (9,-3) 	-- cycle;
				\draw[fill=white] (-7,-.5) rectangle (7,.5);
				\node at (-7,2) [above]{${\scriptstyle k_m}$};
				\node at (2,13) [left]{${\scriptstyle n}$};
				\node at (2,-13) [left]{${\scriptstyle n}$};
				\node at (9,0) [right]{${\scriptstyle 1}$};
			\end{scope}
		}
	\ },
	\mathsf{G}^{(n+1)}(m)=\!\!
	\mathord{\ 
		\tikz[baseline=-.6ex, scale=.1]{
		\begin{scope}[xshift=15cm]
				\draw[->-=.5, very thick, red] (-2,-15) -- (-2,0);
				\draw[-<-=.5, very thick, red] (-2,15) -- (-2,0);
				\draw[-<-=.2, rounded corners] (9,8) -- (4,8) -- (4,13) -- (9,13);
				\draw[-<-=.2, rounded corners] (9,-2) -- (4,-2) -- (4,2) -- (9,2);
				\draw[->-=.8, rounded corners] (9,-8) -- (4,-8) -- (4,-13) -- (9,-13);
				\draw[fill=white] (-7,10) rectangle (7,11);
				\draw[fill=white] (-7,-.5) rectangle (7,.5);
				\draw[fill=white] (-7,-10) rectangle (7,-11);
				\node at (-2,13) [right]{${\scriptstyle n}$};
				\node at (-2,-13) [right]{${\scriptstyle n}$};	
				\node at (7,13) [above]{${\scriptstyle k_1}$};
				\node at (7,2) [above]{${\scriptstyle k_2}$};
				\node at (7,-13) [below]{${\scriptstyle k_3}$};
		\end{scope}
            \draw[dashed, rounded corners, very thick, red] (-13,15) -- (-13,20) -- (13,20) -- (13,15);
            \draw[dashed, rounded corners, very thick, red] (-13,-15) -- (-13,-20) -- (13,-20) -- (13,-15);
		\begin{scope}[xshift=-15cm]
				\draw[->-=.2, rounded corners] (-9,-2) -- (-4,-2) -- (-4,2) -- (-9,2);
				\draw[-<-=.5, very thick, red] (2,-15) -- (2,0);
				\draw[->-=.5, very thick, red] (2,15) -- (2,0);
				\draw[fill=white] (-7,-.5) rectangle (7,.5);
				\node at (-7,2) [above]{${\scriptstyle k_m}$};
				\node at (2,13) [left]{${\scriptstyle n}$};
				\node at (2,-13) [left]{${\scriptstyle n}$};
		\end{scope}
		}
	\ }.
\]
Then, we obtain
\[
    \mathsf{G}^{(n+1)}(m-1)=\frac{[n+2][n+k_{m}+3]}{[n+1][n+k_{m}+2]}\mathsf{G}^{(n+1)}(m).
\]
by \cref{partialtrace}.
From the above equality, it is easily seen that
\[
    \mathsf{G}^{(n+1)}(1)\equiv_{n+1}\mathsf{G}^{(n+1)}(0)
\]
holds for any $k_m$.
Next, we consider the leftmost strand of the other $n+1$ parallel circles.
One can unclasp the leftmost strand from $\mathfrak{sl}_3$-clasps exactly in the same way.
The label $k_j$ in this argument might be $n$.
However, it works independently of $k_j$ as I mentioned above.
We repeatedly apply this argument until all $n+1$ parallel circles passing through $\mathfrak{sl}_3$-clasps become $n$ parallel strands. 
Consequently, we obtain $\mathsf{G}_{n+1}(\otimes_{i=1}^{k}\ID_{D_i})\equiv_{n+1}\mathsf{G}_{n}(\otimes_{i=1}^{k}\ID_{D_i})$.
\end{proof}

\begin{thm}\label{thm:anti-thm}
	Let $L$ be an anti-parallel $B$-adequate link.
	Then,
	\[
		\hat{J}^{\mathfrak{sl}_3}_{{L},{n+1}}(q)-\hat{J}^{\mathfrak{sl}_3}_{{L},{n}}(q)\in q^{n+1}\mathbb{Z}[[q]].
	\]
	In other words, the one-row colored $\mathfrak{sl}_{3}$-Jones polynomials $\{\hat{J}^{\mathfrak{sl}_3}_{{L},{n}}(q)\}_{n}$ of $L$ is zero stable.
\end{thm}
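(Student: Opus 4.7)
The plan is to chain the two main technical results already in place: Proposition~\ref{linktograph} reduces $\langle L^{(n)}\rangle$ modulo $q^{(n+1)+\mdeg{\langle L^{(n)}\rangle}}$ to a single ``graph term'' $G_n := \prod_{i=1}^{k}\gamma^{(n)}_{R_i}(0)^{l_i}\,\mathsf{G}_n(\otimes_{i=1}^{k}M_{R_i}(0;n))$, and Proposition~\ref{graphstability} (which is where adequacy of $L$ is used) shows that the graph factors $\mathsf{G}_n(\otimes_{i}M_{R_i}(0;n))$ at consecutive levels $n$ and $n+1$ are $\equiv_{n+1}$-equivalent. Combining these should give $\hat J^{\mathfrak{sl}_3}_{L,n+1}(q)\equiv_{n+1}\hat J^{\mathfrak{sl}_3}_{L,n}(q)$, from which the existence of a $q$-adic limit $\T^{\mathfrak{sl}_3}_L(q)$ follows immediately.

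First, I would apply Proposition~\ref{linktograph} at both levels $n$ and $n+1$ to write
\[
\langle L^{(n)}\rangle = G_n + E_n,\qquad E_n\in q^{(n+1)+\mdeg{\langle L^{(n)}\rangle}}\mathbb{Z}[q],
\]
and similarly for $n+1$. Because each $\gamma^{(n)}_{R_i}(0)$ is a monomial in $q$, the $\gamma$-factors contribute only to $\mdeg{G_n}$ and disappear under the degree-normalization of Definition~\ref{degnormalization}. Hence $\hat{\langle L^{(n)}\rangle}(q)$ and $\hat{G}_n(q)$ agree modulo $q^{n+1}\mathbb{Q}[[q^{1/6}]]$, provided one checks that the sign chosen in the normalization is consistent; this follows because the error $E_n$ is of strictly higher degree, so the leading nonzero coefficient of $\langle L^{(n)}\rangle$ coincides with that of $G_n$.

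Second, Proposition~\ref{graphstability} gives
\[
\mathsf{G}_{n+1}(\otimes_{i=1}^{k}M_{R_i}(0;n+1))\equiv_{n+1}\mathsf{G}_{n}(\otimes_{i=1}^{k}M_{R_i}(0;n)),
\]
and again the monomial $\gamma$-prefactors do not affect the normalized series. Chaining this with the previous step, we get $\hat J^{\mathfrak{sl}_3}_{L,n+1}(q)-\hat J^{\mathfrak{sl}_3}_{L,n}(q)\in q^{n+1}\mathbb{Q}[[q^{1/6}]]$. At this point L\^e's integrality~\cite{Le00} enters: since both $\hat J^{\mathfrak{sl}_3}_{L,n}(q)$ and $\hat J^{\mathfrak{sl}_3}_{L,n+1}(q)$ lie in $\mathbb{Z}[q]$, their difference lies in $\mathbb{Z}[q]\cap q^{n+1}\mathbb{Q}[[q^{1/6}]]=q^{n+1}\mathbb{Z}[q]$.

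Finally, the sequence $\{\hat J^{\mathfrak{sl}_3}_{L,n}(q)\}_{n}$ is Cauchy in the $q$-adic topology on $\mathbb{Z}[[q]]$, so I would define $\T^{\mathfrak{sl}_3}_L(q)\in\mathbb{Z}[[q]]$ to be its $q$-adic limit; a standard telescoping argument then yields $\T^{\mathfrak{sl}_3}_L(q)-\hat J^{\mathfrak{sl}_3}_{L,n}(q)\in q^{n+1}\mathbb{Z}[[q]]$ for every $n$. The only genuine subtlety is bookkeeping the sign in Definition~\ref{degnormalization}: one must verify that the sign of the leading coefficient of $\langle L^{(n)}\rangle$ stabilizes with $n$, which is not hard since the leading coefficient is inherited from the graph term $\mathsf{G}_n(\otimes_i M_{R_i}(0;n))$ whose leading coefficient is controlled by Proposition~\ref{graphstability}. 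Modulo this sign check, which is routine, the argument reduces entirely to invoking the two propositions above in sequence, so there is no substantial new obstacle.
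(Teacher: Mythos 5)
Your proposal is correct and follows essentially the same route as the paper: apply Proposition~\ref{linktograph} at levels $n$ and $n+1$ to reduce to the graph terms, discard the monomial $\gamma^{(n)}_{R_i}(0)$ prefactors under the normalization built into $\equiv_{n+1}$, bridge the two levels with Proposition~\ref{graphstability}, and invoke L\^e's integrality to land in $q^{n+1}\mathbb{Z}[q]$. Your additional remarks on the sign in Definition~\ref{degnormalization} and on passing to the $q$-adic limit to produce $\T^{\mathfrak{sl}_3}_L(q)$ are harmless elaborations of what the paper leaves implicit.
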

\begin{proof}
    Let us take a link diagram $\mathsf{G}(\otimes_{i=1}^{k}R_{l_i})$ of $L$ with an adequate graph $\mathsf{G}$ and twist regions $\bm{l}\colon \{D_i\}_{i=1}^{k}\to 2\mathbb{Z}_{>0}$.
    The one-row colored $\mathfrak{sl}_3$-Jones polynomial $\hat{J}^{\mathfrak{sl}_3}_{{L},{n}}(q)$ is given by the normalization in \cref{degnormalization} of the clasped $\mathfrak{sl}_3$-web $\mathsf{G}_{n}(\otimes_{i=1}^{k}({R_{l_i}}^{(n)}))$.
    \cref{linkexpansion} and \cref{linktograph} claim that
    \begin{align*}
        \mathsf{G}_{n}(\otimes_{i=1}^{k}(R_{l_i}^{(n)}))
        &\equiv_{2n+1}\prod_{i=1}^{k}\gamma^{(n)}(0)^{l_{i}}\mathsf{G}_{n}(\otimes_{i=1}^{k}\mathrm{JW}_{D_i})\\
        \mathsf{G}_{n+1}(\otimes_{i=1}^{k}(R_{l_i}^{(n+1)}))
        &\equiv_{2n+3}\prod_{i=1}^{k}\gamma^{(n+1)}(0)^{l_{i}}\mathsf{G}_{n+1}(\otimes_{i=1}^{k}\mathrm{JW}_{D_i}),
    \end{align*}
    and \cref{graphstability} claims
    \begin{align*}
        \mathsf{G}_{n}(\otimes_{i=1}^{k}\mathrm{JW}_{D_i})\equiv_{n+1}\mathsf{G}_{n+1}(\otimes_{i=1}^{k}\mathrm{JW}_{D_i}).
    \end{align*}
    It is easy to see that $f(q)\equiv_n g(q)$ if $f(q)\equiv_N g(q)$ for some $N\geq n$, and $(-1)^{k_1}q^{k_2}f(q)\equiv_{n}(-1)^{l_1}q^{l_2}g(q)$ if $f(q)\equiv_n g(q)$ for any $k_1,k_2,l_1,l_2$.
    Hence, the above equivalence relations derive
    \begin{align*}
        \mathsf{G}_{n}(\otimes_{i=1}^{k}(R_{l_i}^{(n)}))
        &\equiv_{n+1}\prod_{i=1}^{k}\gamma^{(n)}(0)^{l_{i}}\mathsf{G}_{n}(\otimes_{i=1}^{k}\mathrm{JW}_{D_i})\\
        &\equiv_{n+1}\prod_{i=1}^{k}\gamma^{(n+1)}(0)^{l_{i}}\mathsf{G}_{n+1}(\otimes_{i=1}^{k}\mathrm{JW}_{D_i})\\
        &\equiv_{n+1}\mathsf{G}_{n+1}(\otimes_{i=1}^{k}(R_{l_i}^{(n+1)})).
    \end{align*}
    It means $\hat{J}^{\mathfrak{sl}_3}_{{L},{n+1}}(q)-\hat{J}^{\mathfrak{sl}_3}_{{L},{n}}(q)\in q^{n+1}\mathbb{Z}[[q^{\frac{1}{6}}]]$.
\end{proof}

\appendix
\section{Formulas for clasped $\mathfrak{sl}_3$-webs}\label{Appendix}
It is well-known that the closure of $\mathrm{JW}_{{-}^{m}{+}^{n}}$ is given by
\[
	\Delta(m,n)=
	\mathord{\ 
		\tikz[baseline=-.6ex, scale=.1]{
			\draw[->-=.2] (0,0) circle [radius=5];
			\draw[-<-=.2] (0,0) circle [radius=8];
			\draw[fill=white] (-10,0) rectangle (-3,1);
			\node at (0,8) [above]{${\scriptstyle m}$};
			\node at (0,5) [below]{${\scriptstyle n}$};
		}
	\ }
	=\frac{[m+1][n+1][m+n+2]}{[2]}.
\]

\begin{prop}\label{partialtrace}
	\[
		\mathord{\ 
			\tikz[baseline=-.6ex, scale=.1]{
				\draw[->-=.2, ->-=.8] (-8,-6) -- (-8,6);
				\draw[-<-=.2, -<-=.8] (-5,-6) -- (-5,6);
				\draw[->-=.2, rounded corners] (-2,0) -- (-2,3) -- (2,3) -- (2,-3) -- (-2,-3) -- 	cycle;
				\draw[fill=white] (-10,0) rectangle (0,1);
				\node at (-8,6) [above]{${\scriptstyle m}$};
				\node at (-5,6) [above]{${\scriptstyle n}$};
				\node at (0,3) [above]{${\scriptstyle l}$};
			}
		\ }
		=\frac{\Delta(m+l,n)}{\Delta(m,n)}
		\mathord{\ 
			\tikz[baseline=-.6ex, scale=.1]{
				\draw[->-=.2, ->-=.8] (-8,-6) -- (-8,6);
				\draw[-<-=.2, -<-=.8] (-5,-6) -- (-5,6);
				\draw[fill=white] (-10,0) rectangle (-3,1);
				\node at (-8,6) [above]{${\scriptstyle m}$};
				\node at (-5,6) [above]{${\scriptstyle n}$};
			}
		\ }
	\]
\end{prop}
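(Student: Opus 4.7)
The LHS is the partial trace of the clasp ${JW}_{{-}^{m+l}{+}^{n}}$ obtained by closing $l$ of its ${-}$-type strands through the loop labelled $l$ on the right. More precisely, let $\pi\colon \W(\epsilon^{m+l}\bar\epsilon^{n};D)\to \W(\epsilon^{m}\bar\epsilon^{n};D)$ denote the linear map that closes $l$ of the ${-}$-strands by gluing a cup on top and a cap on the bottom, and write $L=\pi({JW}_{{-}^{m+l}{+}^{n}})$ for the LHS. The plan has two steps: first show that $L$ is a scalar multiple of ${JW}_{{-}^{m}{+}^{n}}$, then compute the scalar by taking the total closure.

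For the first step, I would use the characterization (up to scalar) of ${JW}_{{-}^{m}{+}^{n}}$ as the unique element of $\W(\epsilon^{m}\bar\epsilon^{n};D)$ annihilated by every ``turnback,'' that is, by attaching a trivalent vertex or a cap to any of its $m+n$ boundary strands from either side, as recorded in Lemma~\ref{claspformula}. The element $L$ satisfies this property: the $l$ strands traced away in forming $L$ are topologically disjoint from the $m+n$ boundary strands of $L$, so attaching a turnback to $L$ commutes with the partial trace $\pi$. The result is $\pi$ applied to a turnback-attached ${JW}_{{-}^{m+l}{+}^{n}}$, which is zero by Lemma~\ref{claspformula} applied to the larger clasp. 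Hence $L = c\cdot {JW}_{{-}^{m}{+}^{n}}$ for some scalar $c\in \mathbb{Q}((q^{\frac{1}{6}}))$.

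For the second step, take the total closure of both sides by gluing the top and bottom boundaries. The closure of the RHS is $c\cdot \Delta(m,n)$ by the definition of $\Delta(m,n)$ recalled just before the proposition. The closure of the LHS equals the total closure of ${JW}_{{-}^{m+l}{+}^{n}}$, since partial-tracing $l$ strands and then closing the remaining $m+n$ strands is the same as closing all $m+l+n$ strands. This closure equals $\Delta(m+l,n)$, so $c = \Delta(m+l,n)/\Delta(m,n)$, completing the argument.

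The main technical subtlety is rigorously establishing the turnback-characterization of the clasp invoked in the first step. It is essentially the statement that ${JW}_{{-}^{m}{+}^{n}}$ is a minimal central idempotent in the relevant endomorphism algebra; while this is folklore for Kuperberg's $A_{2}$ spider, a fully self-contained argument might require an auxiliary induction on $m+n$ along the recursion in Definition~\ref{tworowclasp}. A purely inductive alternative bypassing this is to prove the case $l=1$ directly from Definition~\ref{tworowclasp} and Lemma~\ref{singleexp} by a finite computation, then telescope via $\Delta(m+l,n)/\Delta(m,n)=\prod_{j=1}^{l}\Delta(m+j,n)/\Delta(m+j-1,n)$ to obtain the general $l$.
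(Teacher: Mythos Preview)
Your proposal is correct and follows essentially the same approach as the paper. The paper's proof simply asserts that the clasped $A_{2}$ web space in question is spanned by ${JW}_{{-}^{m}{+}^{n}}$ (this is exactly the one-dimensionality you obtain via turnback annihilation, since the LHS absorbs outer copies of ${JW}_{{-}^{m}{+}^{n}}$ by the idempotent law in Lemma~\ref{claspformula}), then takes the closure to compute the scalar; your two-step argument is a more explicit version of the same reasoning, and your worry about rigorously justifying the characterization is precisely what the paper leaves implicit.
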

\begin{proof}
	It is known that this clasped $\mathfrak{sl}_3$-web space is a one-dimentional and it is spanned by $\mathrm{JW}_{{-}^{m}{+}^{n}}$.
	Thus, we set
	\[
		\mathord{\ 
			\tikz[baseline=-.6ex, scale=.1]{
				\draw[->-=.2, ->-=.8] (-8,-6) -- (-8,6);
				\draw[-<-=.2, -<-=.8] (-5,-6) -- (-5,6);
				\draw[->-=.2, rounded corners] (-2,0) -- (-2,3) -- (2,3) -- (2,-3) -- (-2,-3) -- 	cycle;
				\draw[fill=white] (-10,0) rectangle (0,1);
				\node at (-8,6) [above]{${\scriptstyle m}$};
				\node at (-5,6) [above]{${\scriptstyle n}$};
				\node at (0,3) [above]{${\scriptstyle l}$};
			}
		\ }
		=C
		\mathord{\ 
			\tikz[baseline=-.6ex, scale=.1]{
				\draw[->-=.2, ->-=.8] (-8,-6) -- (-8,6);
				\draw[-<-=.2, -<-=.8] (-5,-6) -- (-5,6);
				\draw[fill=white] (-10,0) rectangle (-3,1);
				\node at (-8,6) [above]{${\scriptstyle m}$};
				\node at (-5,6) [above]{${\scriptstyle n}$};
			}
		\ }.
	\]
	The closure of diagrams in the left- and right-hand sides are given by $\Delta(m+l,n)$ and $\Delta(m,n)$, respectively.
	Hence, $C=\Delta(m+l,n)/\Delta(m,n)$. 
\end{proof}

In order to prove \cref{unclasp}, we prepare some lemmas.

\begin{lem}[The bubble skein expansion formula~\cite{Yuasa17}]\label{lem:bubble}
	\[
		\,\tikz[baseline=-.6ex, scale=1]{
		\draw (-.4,.5) -- +(-.2,0);
		\draw (.4,-.5) -- +(.2,0);
		\draw (-.4,-.5) -- +(-.2,0);
		\draw (.4,.5) -- +(.2,0);
		\draw[-<-=.5] (-.4,.5) -- (0,.5);
		\draw[-<-=.5] (0,.5) -- (.4,.5);
		\draw[->-=.5] (-.4,-.5) -- (0,-.5);
		\draw[->-=.5] (0,-.5) -- (.4,-.5);
		\draw[-<-=.5] (.05,.3) to[out=east, in=east] (.05,.-.3);
		\draw[->-=.5] (-.05,.3) to[out=west, in=west] (-.05,-.3);
		\draw[fill=white] (-.4,.3) rectangle +(-.1,.3);
		\draw[fill=white] (.4,.3) rectangle +(.1,.3);
		\draw[fill=white] (-.4,-.3) rectangle +(-.1,-.3);
		\draw[fill=white] (.4,-.3) rectangle +(.1,-.3);
		\draw[fill=white] (-.05,.2) rectangle +(.1,.4);
		\draw[fill=white] (-.05,-.2) rectangle +(.1,-.4);
		\node at (.4,-.5)[below right]{$\scriptstyle{k-b}$};
		\node at (-.4,-.5)[below left]{$\scriptstyle{k-a}$};
		\node at (.4,.5)[above right]{$\scriptstyle{l-b}$};
		\node at (-.4,.5)[above left]{$\scriptstyle{l-a}$};
		\node at (-.2,0)[left]{$\scriptstyle{a}$};
		\node at (.2,0)[right]{$\scriptstyle{b}$};
		\node at (0,-.6)[below]{$\scriptstyle{k}$};
		\node at (0,.6)[above]{$\scriptstyle{l}$};
		}\, 
		=
		\sum_{t=\max\{a, b\}}^{\min\{a+b, k, l\}}
		\frac{{k\brack t}{l\brack t}{t\brack a}{t\brack b}{k+l-t+2\brack k+l-a-b+2}}{{k\brack a}{l\brack a}{k\brack b}{l\brack b}}
		\,\tikz[baseline=-.6ex, scale=1]{
		\draw (-.4,.4) -- +(-.2,0);
		\draw (.4,-.4) -- +(.2,0);
		\draw (-.4,-.4) -- +(-.2,0);
		\draw (.4,.4) -- +(.2,0);
		\draw[-<-=.5] (-.4,.5) -- (.4,.5);
		\draw[->-=.5] (-.4,-.5)  -- (.4,-.5);
		\draw[-<-=.5] (-.4,.3) to[out=east, in=east] (-.4,.-.3);
		\draw[->-=.5] (.4,.3) to[out=west, in=west] (.4,-.3);
		\draw[fill=white] (-.4,.2) rectangle +(-.1,.4);
		\draw[fill=white] (.4,.2) rectangle +(.1,.4);
		\draw[fill=white] (-.4,-.2) rectangle +(-.1,-.4);
		\draw[fill=white] (.4,-.2) rectangle +(.1,-.4);
		\node at (.4,-.6)[right]{$\scriptstyle{k-b}$};
		\node at (-.4,-.6)[left]{$\scriptstyle{k-a}$};
		\node at (.4,.6)[right]{$\scriptstyle{l-b}$};
		\node at (-.4,.6)[left]{$\scriptstyle{l-a}$};
		\node at (-.2,0)[left]{$\scriptstyle{t-a}$};
		\node at (.2,0)[right]{$\scriptstyle{t-b}$};
		\node at (0,-.5)[below]{$\scriptstyle{k-t}$};
		\node at (0,.5)[above]{$\scriptstyle{l-t}$};
		}\, 
		\]	
\end{lem}

\begin{lem}[{\cite[Theorem~3.3]{Kim07}}]\label{lem:two-row-recursion}
	\begin{align*}
		\,\tikz[baseline=-.6ex, scale=.1]{
			\draw[->-=.6] (-3,0) -- (-3,5);
			\draw[->-=.5] (-3,-5) -- (-3,0);
			\draw[->-=.6] (0,0) -- (0,5);
			\draw[->-=.5] (0,-5) -- (0,0);
			\draw[-<-=.6] (3,0) -- (3,5);
			\draw[-<-=.5] (3,-5) -- (3,0);
			\draw[fill=white] (-4,0) rectangle (4,1);
			\node at (-3,5) [above]{${\scriptstyle 1}$};
			\node at (-3,-5) [below]{${\scriptstyle 1}$};
			\node at (0,5) [above]{${\scriptstyle k}$};
			\node at (0,-5) [below]{${\scriptstyle k}$};
			\node at (3,5) [above]{${\scriptstyle l}$};
			\node at (3,-5) [below]{${\scriptstyle l}$};
		}\,
		=
		\,\tikz[baseline=-.6ex, scale=.1]{
			\draw[->-=.5] (-3,-5) -- (-3,5);
			\draw[->-=.6] (0,0) -- (0,5);
			\draw[->-=.5] (0,-5) -- (0,0);
			\draw[-<-=.6] (3,0) -- (3,5);
			\draw[-<-=.5] (3,-5) -- (3,0);
			\draw[fill=white] (-1,0) rectangle (4,1);
			\node at (-3,5) [above]{${\scriptstyle 1}$};
			\node at (-3,-5) [below]{${\scriptstyle 1}$};
			\node at (0,5) [above]{${\scriptstyle k}$};
			\node at (0,-5) [below]{${\scriptstyle k}$};
			\node at (3,5) [above]{${\scriptstyle l}$};
			\node at (3,-5) [below]{${\scriptstyle l}$};
		}\,
		-\frac{[k]}{[k+1]}
		\,\tikz[baseline=-.6ex, scale=.1]{
			\draw[->-=.2, ->-=.9] (-3,-7) -- (-3,7);
			\draw[->-=.5] (1,-3) -- (1,3);
			\draw[-<-=.5] (9,-3) -- (9,3);
			\draw[->-=.5] (-3,2) -- (0,2) -- (0,3);
			\draw[-<-=.5] (-3,-2) -- (0,-2) -- (0,-3);
			\node at (-3,7) [above]{${\scriptstyle 1}$};
			\node at (-3,-7) [below]{${\scriptstyle 1}$};
			\node at (.5,0) [right]{${\scriptstyle k-1}$};
			\node at (10,0) [right]{${\scriptstyle l}$};
			\begin{scope}[yshift=3cm, xshift=1cm]
				\draw[->-=.6] (2,1) -- (2,4);
				\draw[-<-=.6] (8,1) -- (8,4);
				\draw[fill=white] (-2,0) rectangle (10,1);
				\node at (2,4) [above]{${\scriptstyle k}$};
				\node at (8,4) [above]{${\scriptstyle l}$};
			\end{scope}
			\begin{scope}[yshift=-3cm, xshift=1cm]
				\draw[-<-=.6] (2,-1) -- (2,-4);
				\draw[->-=.6] (8,-1) -- (8,-4);
				\draw[fill=white] (-2,0) rectangle (10,-1);
				\node at (2,-4) [below]{${\scriptstyle k}$};
				\node at (8,-4) [below]{${\scriptstyle l}$};
			\end{scope}
		}\,
		-\frac{[l]}{[k+1][k+l+2]}
		\,\tikz[baseline=-.6ex, scale=.1]{
			\draw[->-=.5] (3,-3) -- (3,3);
			\draw[-<-=.5] (9,-3) -- (9,3);
			\draw[-<-=.3, rounded corners] (-3,7) -- (-3,1) -- (0,1) -- (0,3);
			\draw[->-=.3, rounded corners] (-3,-7) -- (-3,-1) -- (0,-1) -- (0,-3);
			\node at (-3,7) [above]{${\scriptstyle 1}$};
			\node at (-3,-7) [below]{${\scriptstyle 1}$};
			\node at (3,0) [right]{${\scriptstyle k}$};
			\node at (9,0) [right]{${\scriptstyle l-1}$};
			\begin{scope}[yshift=3cm, xshift=1cm]
				\draw[->-=.6] (2,1) -- (2,4);
				\draw[-<-=.6] (7,1) -- (7,4);
				\draw[fill=white] (-2,0) rectangle (10,1);
				\node at (2,4) [above]{${\scriptstyle k}$};
				\node at (7,4) [above]{${\scriptstyle l}$};
			\end{scope}
			\begin{scope}[yshift=-3cm, xshift=1cm]
				\draw[-<-=.6] (2,-1) -- (2,-4);
				\draw[->-=.6] (7,-1) -- (7,-4);
				\draw[fill=white] (-2,0) rectangle (10,-1);
				\node at (2,-4) [below]{${\scriptstyle k}$};
				\node at (7,-4) [below]{${\scriptstyle l}$};
			\end{scope}
		}\ .
	\end{align*}
\end{lem}

\begin{lem}\label{lem:lemma1}
	\[
		\,\tikz[baseline=-.6ex, scale=.1, yshift=-5cm]{
			\draw[->-=.8] (-5,6) -- (-5,10);
			\draw[->-=.5] (-4,0) -- (-4,6);
			\draw[->-=.5] (0,0) -- (0,6);
			\draw[->-=.5] (-1,6) to[out=north, in=west] (2,9) to[out=east, in=north] (5,0);
			\draw[fill=white] (-2,0) rectangle (6,1);
			\draw[fill=white] (-6,6) rectangle (2,7);
			\node at (-5,10) [above]{\scriptsize $k$};
			\node at (-4,3) [left]{\scriptsize $1$};
			\node at (0,3) [right]{\scriptsize $k$};
			\node at (2,9) [above]{\scriptsize $1$};
		}\
		=\frac{(-1)^k}{[k+1]}
		\,\tikz[baseline=-.6ex, scale=.1, yshift=-5cm]{
			\draw[->-=.5] (4,0) -- (4,8);
			\draw[->-=.5] (-4,0) to[out=north, in=west] (-2,4) to[out=east, in=north] (0,0);
			\draw[fill=white] (-2,0) rectangle (6,1);
			\node at (4,8) [above]{\scriptsize $k$};
			\node at (-2,4) [above]{\scriptsize $1$};
		}\
	\]
\end{lem}
\begin{proof}
    Apply \cref{singleexp} to an $\mathfrak{sl}_3$-clasp in the left above. 
    Then, one can see that diagrams in the expansion vanish except for the last term due to the bottom $\mathfrak{sl}_3$-clasp.
    It becomes the right-hand side by \cref{claspformula}~(2).
\end{proof}

\begin{lem}\label{lem:lemma2}
	\begin{align*}
		\,\tikz[baseline=-.6ex, scale=.1]{
			\draw[->-=.5] (3,-3) -- (3,3);
			\draw[-<-=.5] (9,-3) -- (9,3);
			\draw[-<-=.3, rounded corners] (-3,7) -- (-3,1) -- (0,1) -- (0,3);
			\node at (-3,2) [left]{${\scriptstyle 1}$};
			\node at (3,0) [right]{${\scriptstyle k}$};
			\node at (9,0) [right]{${\scriptstyle l-1}$};
			\begin{scope}[yshift=3cm, xshift=1cm]
				\draw[->-=.6] (2,1) -- (2,4);
				\draw[-<-=.6] (7,1) -- (7,7);
				\draw[fill=white] (-2,0) rectangle (10,1);
				\node at (2,3) [right]{${\scriptstyle k}$};
				\node at (7,7) [above]{${\scriptstyle l}$};
			\end{scope}
			\begin{scope}[yshift=-3cm, xshift=1cm]
				\draw[-<-=.6] (2,-1) -- (2,-4);
				\draw[->-=.6] (8,-1) -- (8,-4);
				\draw[fill=white] (-2,0) rectangle (10,-1);
				\node at (2,-4) [below]{${\scriptstyle k}$};
				\node at (8,-4) [below]{${\scriptstyle l-1}$};
			\end{scope}
			\draw (0,7) -- (0,10);
			\draw[fill=white] (-4,7) rectangle (5,8);
			\node at (0,10) [above]{\scriptsize $k+1$};
		}\ 
		=(-1)^{k}\frac{[l+1]}{[k+l+1]}
		\,\tikz[baseline=-.6ex, scale=.1]{
			\draw[-<-=.3] (-2,7) to[out=south, in=north] (3,-3);
			\draw[->-=.5] (9,7) -- (9,-3);
			\node at (-2,2) {${\scriptstyle k}$};
			\node at (9,0) [right]{${\scriptstyle l-1}$};
			\begin{scope}[yshift=3cm, xshift=1cm]
				\draw[->-=.6] (6,4) to[out=south, in=south] (2,4);
				\node at (4,0) {${\scriptstyle 1}$};
				\node at (7,7) [above]{${\scriptstyle l}$};
			\end{scope}
			\begin{scope}[yshift=-3cm, xshift=1cm]
				\draw[-<-=.6] (2,-1) -- (2,-4);
				\draw[->-=.6] (8,-1) -- (8,-4);
				\draw[fill=white] (-2,0) rectangle (10,-1);
				\node at (2,-4) [below]{${\scriptstyle k}$};
				\node at (8,-4) [below]{${\scriptstyle l-1}$};
			\end{scope}
			\draw (0,7) -- (0,10);
			\draw (8,7) -- (8,10);
			\draw[fill=white] (-4,7) rectangle (5,8);
			\draw[fill=white] (6,7) rectangle (10,8);
			\node at (0,10) [above]{\scriptsize $k+1$};
		}\ .
	\end{align*}
\end{lem}
\begin{proof}
	It is known that the $\mathfrak{sl}_3$-web space on a disk with clasped end points $\mathrm{JW}_{{-}^l}$, $\mathrm{JW}_{{+}^{k+1}}$, and $\mathrm{JW}_{{-}^k{+}^{l-1}}$ is spanned by one clasped $\mathfrak{sl}_3$-web in the ringht-hand side.
	See, for example, \cite{Kuperberg96, Kim07} for details.
	Hence we only have to determine the coefficient $C$ such that
	\begin{align*}
		\,\tikz[baseline=-.6ex, scale=.1]{
			\draw[->-=.5] (3,-3) -- (3,3);
			\draw[-<-=.5] (9,-3) -- (9,3);
			\draw[-<-=.3, rounded corners] (-3,7) -- (-3,1) -- (0,1) -- (0,3);
			\node at (-3,2) [left]{${\scriptstyle 1}$};
			\node at (3,0) [right]{${\scriptstyle k}$};
			\node at (9,0) [right]{${\scriptstyle l-1}$};
			\begin{scope}[yshift=3cm, xshift=1cm]
				\draw[->-=.6] (2,1) -- (2,4);
				\draw[-<-=.6] (7,1) -- (7,7);
				\draw[fill=white] (-2,0) rectangle (10,1);
				\node at (2,3) [right]{${\scriptstyle k}$};
				\node at (7,7) [above]{${\scriptstyle l}$};
			\end{scope}
			\begin{scope}[yshift=-3cm, xshift=1cm]
				\draw[-<-=.6] (2,-1) -- (2,-4);
				\draw[->-=.6] (8,-1) -- (8,-4);
				\draw[fill=white] (-2,0) rectangle (10,-1);
				\node at (2,-4) [below]{${\scriptstyle k}$};
				\node at (8,-4) [below]{${\scriptstyle l-1}$};
			\end{scope}
			\draw (0,7) -- (0,10);
			\draw[fill=white] (-4,7) rectangle (5,8);
			\node at (0,10) [above]{\scriptsize $k+1$};
		}\ 
		=C
		\,\tikz[baseline=-.6ex, scale=.1]{
			\draw[-<-=.3] (-2,7) to[out=south, in=north] (3,-3);
			\draw[->-=.5] (9,7) -- (9,-3);
			\node at (-2,2) {${\scriptstyle k}$};
			\node at (9,0) [right]{${\scriptstyle l-1}$};
			\begin{scope}[yshift=3cm, xshift=1cm]
				\draw[->-=.6] (6,4) to[out=south, in=south] (2,4);
				\node at (4,0) {${\scriptstyle 1}$};
				\node at (7,7) [above]{${\scriptstyle l}$};
			\end{scope}
			\begin{scope}[yshift=-3cm, xshift=1cm]
				\draw[-<-=.6] (2,-1) -- (2,-4);
				\draw[->-=.6] (8,-1) -- (8,-4);
				\draw[fill=white] (-2,0) rectangle (10,-1);
				\node at (2,-4) [below]{${\scriptstyle k}$};
				\node at (8,-4) [below]{${\scriptstyle l-1}$};
			\end{scope}
			\draw (0,7) -- (0,10);
			\draw (8,7) -- (8,10);
			\draw[fill=white] (-4,7) rectangle (5,8);
			\draw[fill=white] (6,7) rectangle (10,8);
			\node at (0,10) [above]{\scriptsize $k+1$};
		}\ .
	\end{align*}
	Attach an $\mathfrak{sl}_3$-web
	$
	\,\tikz[baseline=-.6ex, scale=.1, yshift=-2cm]{
		\draw[->-=.8] (-6,0) -- (-6,5);
		\draw[->-=.5] (-4,0) to[out=north, in=west] (-2,4) to[out=east, in=north] (0,0);
		\draw[-<-=.8] (3,0) -- (3,5);
		\node at (-6,5) [above]{\scriptsize $k$};
		\node at (-2,4) [above]{\scriptsize $1$};
		\node at (3,5) [above]{\scriptsize $l-1$};
	}\
	$
	to the top of $\mathfrak{sl}_3$-webs in the both sides.
	The left-hand side becomes $\frac{(-1)^k\Delta(k,l)}{[k+1]\Delta(k,l-1)}\mathrm{JW}_{{-}^k{+}^{l+1}}$ by \cref{lem:lemma1} and \cref{partialtrace}.
	The right-hand side becomes $C\frac{[k+l+2]}{[k+1][l]}\mathrm{JW}_{{-}^k{+}^{l+1}}$ by \cref{lem:bubble}.
	We obtain the value $C$ in the assertion by solving this equation.
\end{proof}

\begin{prop}\label{prop:unclasp-appendix}
	\begin{align*}
		\,\tikz[baseline=-.6ex, scale=.1]{
			\draw[->-=.6] (-3,0) -- (-3,5);
			\draw[->-=.5] (-3,-5) -- (-3,0);
			\draw[->-=.6] (0,0) -- (0,5);
			\draw[->-=.5] (0,-5) -- (0,0);
			\draw[-<-=.6] (3,0) -- (3,5);
			\draw[-<-=.5] (3,-5) -- (3,0);
			\draw[fill=white] (-4,0) rectangle (4,1);
			\node at (-3,5) [above]{${\scriptstyle 1}$};
			\node at (-3,-5) [below]{${\scriptstyle 1}$};
			\node at (0,5) [above]{${\scriptstyle k}$};
			\node at (0,-5) [below]{${\scriptstyle k}$};
			\node at (3,5) [above]{${\scriptstyle l}$};
			\node at (3,-5) [below]{${\scriptstyle l}$};
		}\,
		=
		\,\tikz[baseline=-.6ex, scale=.1, yshift=-1cm]{
			\draw[->-=.5] (-3,-5) -- (-3,5);
			\draw[->-=.6] (0,0) -- (0,5);
			\draw[->-=.5] (0,-5) -- (0,0);
			\draw[-<-=.6] (3,0) -- (3,8);
			\draw[-<-=.5] (3,-5) -- (3,0);
			\draw[fill=white] (-1,0) rectangle (4,1);
			\node at (-4,-5) [below]{${\scriptstyle 1}$};
			\node at (-2,8) [above]{${\scriptstyle k+1}$};
			\node at (0,-5) [below]{${\scriptstyle k}$};
			\node at (3,8) [above]{${\scriptstyle l}$};
			\node at (3,-5) [below]{${\scriptstyle l}$};
			\draw[->-=.8] (-2,5) -- (-2,8);
			\draw[fill=white] (-5,4) rectangle (1,5);
		}\,
		+(-1)^{k+1}\frac{[l]}{[k+l+2]}
		\,\tikz[baseline=-.6ex, scale=.1]{
			\draw[-<-=.3] (-4,5) to[out=south, in=north] (3,-3);
			\draw[->-=.5] (9,5) -- (9,-3);
			\node at (2,1) {${\scriptstyle k}$};
			\node at (9,0) [right]{${\scriptstyle l-1}$};
			\begin{scope}[yshift=3cm, xshift=1cm]
				\draw[->-=.6] (6,2) to[out=south, in=south] (2,2);
				\node at (4,-2) {${\scriptstyle 1}$};
				\node at (7,5) [above]{${\scriptstyle l}$};
			\end{scope}
			\begin{scope}[yshift=-3cm, xshift=1cm]
				\draw[-<-=.6] (2,-1) -- (2,-4);
				\draw[->-=.6] (7,-1) -- (7,-4);
				\draw[->-=.2] (-4,-4) to[out=north, in=west] (-2,2) to[out=east, in=north] (0,-1);
				\draw[fill=white] (-3,0) rectangle (10,-1);
				\node at (-4,-4) [below]{${\scriptstyle 1}$};
				\node at (2,-4) [below]{${\scriptstyle k}$};
				\node at (7,-4) [below]{${\scriptstyle l}$};
			\end{scope}
			\draw (0,5) -- (0,8);
			\draw (8,5) -- (8,8);
			\draw[fill=white] (-5,5) rectangle (4,6);
			\draw[fill=white] (6,5) rectangle (10,6);
			\node at (0,8) [above]{\scriptsize $k+1$};
		}\ .
	\end{align*}
\end{prop}
\begin{proof}
	Let us denote the second coefficient in the assertion by $a_k\coloneqq (-1)^{k+1}[l]/[k+l+2]$.
	We firstly attach an $\mathfrak{sl}_3$-clasp
	$
	\ \tikz[baseline=-.6ex, scale=.1, yshift=-5cm]{
		\draw[->-=.8] (-2,5) -- (-2,8);
		\draw[fill=white] (-5,4) rectangle (1,5);
		\node at (-2,8) [above]{${\scriptstyle k+1}$};
	}\ 
	$
	to the left top of diagrams in \cref{lem:two-row-recursion}.
	Next, we calculate the second and the third terms on the right-hand side of the resulting equation.
	More precisely, we will show
	\begin{align}
		-\frac{[k]}{[k+1]}
		\tikz[baseline=-.6ex, scale=.1]{
			\draw[->-=.5] (2,-3) -- (2,3);
			\draw[-<-=.5] (9,-3) -- (9,3);
			\draw[-<-=.3] (-3,7) to[out=south, in=west] (-1,1) to[out=east, in=south] (1,3);
			\node at (-3,2) [left]{${\scriptstyle 1}$};
			\node at (1.5,1) [right]{${\scriptstyle k-1}$};
			\node at (9,0) [right]{${\scriptstyle l}$};
			\begin{scope}[yshift=3cm, xshift=1cm]
				\draw[->-=.6] (2,1) -- (2,4);
				\draw[-<-=.6] (7,1) -- (7,7);
				\draw[fill=white] (-2,0) rectangle (10,1);
				\node at (2,3) [right]{${\scriptstyle k}$};
				\node at (7,7) [above]{${\scriptstyle l}$};
			\end{scope}
			\begin{scope}[yshift=-3cm, xshift=1cm]
				\draw[-<-=.6] (2,-1) -- (2,-4);
				\draw[->-=.6] (8,-1) -- (8,-4);
				\draw[->-=.2] (-4,-4) to[out=north, in=west] (-2,2) to[out=east, in=north] (0,-1);
				\draw[fill=white] (-2,0) rectangle (10,-1);
				\node at (-4,-4) [below]{${\scriptstyle 1}$};
				\node at (2,-4) [below]{${\scriptstyle k}$};
				\node at (8,-4) [below]{${\scriptstyle l}$};
			\end{scope}
			\draw[-<-=.8] (-1,-1) -- (-1,1);
			\draw (0,7) -- (0,10);
			\draw[fill=white] (-4,7) rectangle (5,8);
			\node at (0,10) [above]{\scriptsize $k+1$};
		}\ 
		-\frac{[l]}{[k+1][k+l+2]}
		\,\tikz[baseline=-.6ex, scale=.1]{
			\draw[->-=.5] (3,-3) -- (3,3);
			\draw[-<-=.5] (9,-3) -- (9,3);
			\draw[-<-=.3, rounded corners] (-3,7) -- (-3,1) -- (0,1) -- (0,3);
			\node at (-3,2) [left]{${\scriptstyle 1}$};
			\node at (3,0) [right]{${\scriptstyle k}$};
			\node at (9,0) [right]{${\scriptstyle l-1}$};
			\begin{scope}[yshift=3cm, xshift=1cm]
				\draw[->-=.6] (2,1) -- (2,4);
				\draw[-<-=.6] (7,1) -- (7,7);
				\draw[fill=white] (-2,0) rectangle (10,1);
				\node at (2,3) [right]{${\scriptstyle k}$};
				\node at (7,7) [above]{${\scriptstyle l}$};
			\end{scope}
			\begin{scope}[yshift=-3cm, xshift=1cm]
				\draw[-<-=.6] (2,-1) -- (2,-4);
				\draw[->-=.6] (8,-1) -- (8,-4);
				\draw[->-=.2, rounded corners] (-4,-4) -- (-4,2) -- (-1,2) -- (-1,-1);
				\draw[fill=white] (-2,0) rectangle (10,-1);
				\node at (-4,-4) [below]{${\scriptstyle 1}$};
				\node at (2,-4) [below]{${\scriptstyle k}$};
				\node at (8,-4) [below]{${\scriptstyle l}$};
			\end{scope}
			\draw (0,7) -- (0,10);
			\draw[fill=white] (-4,7) rectangle (5,8);
			\node at (0,10) [above]{\scriptsize $k+1$};
		}\ 
		=a_k
		\,\tikz[baseline=-.6ex, scale=.1]{
			\draw[-<-=.3] (-4,5) to[out=south, in=north] (3,-3);
			\draw[->-=.5] (9,5) -- (9,-3);
			\node at (2,1) {${\scriptstyle k}$};
			\node at (9,0) [right]{${\scriptstyle l-1}$};
			\begin{scope}[yshift=3cm, xshift=1cm]
				\draw[->-=.6] (6,2) to[out=south, in=south] (2,2);
				\node at (4,-2) {${\scriptstyle 1}$};
				\node at (7,5) [above]{${\scriptstyle l}$};
			\end{scope}
			\begin{scope}[yshift=-3cm, xshift=1cm]
				\draw[-<-=.6] (2,-1) -- (2,-4);
				\draw[->-=.6] (7,-1) -- (7,-4);
				\draw[->-=.2] (-4,-4) to[out=north, in=west] (-2,2) to[out=east, in=north] (0,-1);
				\draw[fill=white] (-3,0) rectangle (10,-1);
				\node at (-4,-4) [below]{${\scriptstyle 1}$};
				\node at (2,-4) [below]{${\scriptstyle k}$};
				\node at (7,-4) [below]{${\scriptstyle l}$};
			\end{scope}
			\draw (0,5) -- (0,8);
			\draw (8,5) -- (8,8);
			\draw[fill=white] (-5,5) rectangle (4,6);
			\draw[fill=white] (6,5) rectangle (10,6);
			\node at (0,8) [above]{\scriptsize $k+1$};
		}\ .\label{eq:unclasp1}
	\end{align}
	We remark that the second $\mathfrak{sl}_3$-web on the left-hand side is already done in \cref{lem:lemma2}, and it provides a coefficient $(-1)^k[l+1]/[k+l+1]$.
	\cref{eq:unclasp1} holds if the first $\mathfrak{sl}_3$-web provides a coefficient $a_{k-1}$ because the summation of these coefficients is calculated as follows.
	\begin{align*}
		&-\frac{[k]}{[k+1]}a_{k-1}+\left(-\frac{[l]}{[k+1][k+l+2]}\right)\left((-1)^k\frac{[l+1]}{[k+l+1]}\right)\\
		&\quad =(-1)^{k+1}\frac{[l]}{[k+1][k+l+1][k+l+2]}([k][k+l+2]+[l+1])\\
		&\quad =(-1)^{k+1}\frac{[l]}{[k+l+2]}=a_k.
	\end{align*}
	We used $[k+1][k+l+1]-[k][k+l+2]=[l+1]$ in the equation above.
	Hence, let us prove
	\begin{align}
		\tikz[baseline=-.6ex, scale=.1]{
			\draw[->-=.5] (2,-3) -- (2,3);
			\draw[-<-=.5] (9,-3) -- (9,3);
			\draw[-<-=.3] (-3,7) to[out=south, in=west] (-1,1) to[out=east, in=south] (1,3);
			\node at (-3,2) [left]{${\scriptstyle 1}$};
			\node at (1.5,1) [right]{${\scriptstyle k-1}$};
			\node at (9,0) [right]{${\scriptstyle l}$};
			\begin{scope}[yshift=3cm, xshift=1cm]
				\draw[->-=.6] (2,1) -- (2,4);
				\draw[-<-=.6] (7,1) -- (7,7);
				\draw[fill=white] (-2,0) rectangle (10,1);
				\node at (2,3) [right]{${\scriptstyle k}$};
				\node at (7,7) [above]{${\scriptstyle l}$};
			\end{scope}
			\begin{scope}[yshift=-3cm, xshift=1cm]
				\draw[-<-=.6] (2,-1) -- (2,-4);
				\draw[->-=.6] (8,-1) -- (8,-4);
				\draw[->-=.2] (-4,-4) to[out=north, in=west] (-2,2) to[out=east, in=north] (0,-1);
				\draw[fill=white] (-2,0) rectangle (10,-1);
				\node at (-4,-4) [below]{${\scriptstyle 1}$};
				\node at (2,-4) [below]{${\scriptstyle k}$};
				\node at (8,-4) [below]{${\scriptstyle l}$};
			\end{scope}
			\draw[-<-=.8] (-1,-1) -- (-1,1);
			\draw (0,7) -- (0,10);
			\draw[fill=white] (-4,7) rectangle (5,8);
			\node at (0,10) [above]{\scriptsize $k+1$};
		}\ 
		=a_{k-1}
		\,\tikz[baseline=-.6ex, scale=.1]{
			\draw[-<-=.3] (-4,5) to[out=south, in=north] (3,-3);
			\draw[->-=.5] (9,5) -- (9,-3);
			\node at (2,1) {${\scriptstyle k}$};
			\node at (9,0) [right]{${\scriptstyle l-1}$};
			\begin{scope}[yshift=3cm, xshift=1cm]
				\draw[->-=.6] (6,2) to[out=south, in=south] (2,2);
				\node at (4,-2) {${\scriptstyle 1}$};
				\node at (7,5) [above]{${\scriptstyle l}$};
			\end{scope}
			\begin{scope}[yshift=-3cm, xshift=1cm]
				\draw[-<-=.6] (2,-1) -- (2,-4);
				\draw[->-=.6] (7,-1) -- (7,-4);
				\draw[->-=.2] (-4,-4) to[out=north, in=west] (-2,2) to[out=east, in=north] (0,-1);
				\draw[fill=white] (-3,0) rectangle (10,-1);
				\node at (-4,-4) [below]{${\scriptstyle 1}$};
				\node at (2,-4) [below]{${\scriptstyle k}$};
				\node at (7,-4) [below]{${\scriptstyle l}$};
			\end{scope}
			\draw (0,5) -- (0,8);
			\draw (8,5) -- (8,8);
			\draw[fill=white] (-5,5) rectangle (4,6);
			\draw[fill=white] (6,5) rectangle (10,6);
			\node at (0,8) [above]{\scriptsize $k+1$};
		}\ \label{eq:unclasp2}
		\end{align}
	by induction on $k$.
	One can easily prove \cref{eq:clasped-graph-deg2} at $k=1$ by expanding the middle $\mathfrak{sl}_3$-clasp.
	We assume that \cref{eq:unclasp2} holds when $k=n$.
	It means that \cref{prop:unclasp-appendix} also holds when $k=n$ by the above argument.
	Thus, we apply \cref{prop:unclasp-appendix} to the middle $\mathfrak{sl}_3$-clasp on the left-hand side of \cref{eq:unclasp2} at $k=n+1$, and one can confirm that it concludes the right-hand side of \cref{eq:unclasp2}.
\end{proof}

\bibliographystyle{amsalpha}
\def\cprime{$'$}
\providecommand{\bysame}{\leavevmode\hbox to3em{\hrulefill}\thinspace}

\end{document}